\tikzstyle{bl}=[circle, draw=white, thin,fill=black!100, scale=0.5]
\tikzstyle{cg}=[circle, draw, thin,fill=black!10, scale=0.8]
\tikzstyle{cw}=[circle, draw, thin,fill=white, scale=0.8]
\numberwithin{equation}{section}
\newtheorem{proposition}{Proposition}[section]
\newtheorem{theorem}[proposition]{Theorem}
\newtheorem{lemma}[proposition]{Lemma}
\newtheorem{corollary}[proposition]{Corollary}
\newtheorem{definition}[proposition]{{Definition}}
\newenvironment{defn}{\begin{definition} \rm}{\end{definition}}
\newtheorem{remark}[proposition]{{Remark}}
\newtheorem{Example}[proposition]{Example} 
\newtheorem*{Ex}{Example}
\newcommand{\cB}{{\mathcal B}}
\newcommand{\cP}{{\mathcal P}}
\newcommand{\cL}{{\mathcal L}}
\newcommand{\cM}{{\mathcal M}}
\renewcommand{\Im}{\operatorname{Im}\nolimits}
\newcommand{\Ker}{\operatorname{Ker}\nolimits}
\newcommand{\soc}{\operatorname{soc}\nolimits}
\newcommand{\val}{\operatorname{val}\nolimits}
\newcommand{\rank}{\operatorname{rank}\nolimits}
\definecolor{alizarin}{rgb}{0.82, 0.1, 0.26}
\definecolor{candyapplered}{rgb}{1.0, 0.03, 0.0}
\definecolor{blue(pigment)}{rgb}{0.2, 0.2, 0.6}
\definecolor{darkspringgreen}{rgb}{0.09, 0.45, 0.27}
\def\thm@space@setup{%
  \thm@preskip=0.7cM \thm@postskip=0.3cM
}
\thanks{  The second author is supported by the EPSRC through an Early Career Fellowship EP/P016294/1. The first and third authors have been partially supported by the projects UBACYT 20020160100613BA and PIP-CONICET 112–201501–
00483CO, and they also thank the Department of Mathematics of the University of Leicester. The third author is a research member of CONICET (Argentina).}
\begin{document}

\title[On the Hochschild cohomology of gentle and Brauer graph algebras]
{On the Lie algebra structure of the first Hochschild cohomology of gentle algebras and Brauer graph algebras}

\author[Chaparro]{Cristian Chaparro}
\address{Cristian Chaparro\\
Departamento de Matem\'atica \\
 FCEyN Universidad de Buenos Aires \\ 
 Pabellon I - Ciudad Universitaria \\
1428 - Buenos Aires \\ 
  Argentina}
\email{cchaparro@dm.uba.ar}

\author[Schroll]{Sibylle Schroll}
\address{Sibylle Schroll\\
Department of Mathematics \\
University of Leicester \\
University Road  \\
Leicester LE1 7RH, UK
}
\email{schroll@le.ac.uk}

\author[Solotar]{Andrea Solotar}
\address{Andrea Solotar\\
Departamento de Matem\'atica \\
 FCEyN Universidad de Buenos Aires \\ 
 Pabellon I - Ciudad Universitaria \\
1428 - Buenos Aires \\ 
  Argentina}
\email{asolotar@dm.uba.ar}

\subjclass[2010]{16E40,16W25,16G20}

\keywords{Hochschild cohomology, Gerstenhaber brackets, Brauer graph algebras, trivial extensions, Lie algebras}

\dedicatory{Dedicated to Michel Brou\'e who has done so much for algebra as a subject. }

\begin{abstract}
In this paper we determine the first Hochschild homology and cohomology with different coefficients for gentle algebras and we give a geometrical interpretation of these (co)homologies using the ribbon graph of a gentle algebra as defined in \cite{S}. 
We give an explicit description of the Lie algebra structure of  the first Hochschild cohomology  of gentle  and Brauer graph algebras  (with multiplicity one) based on trivial extensions of gentle algebras and we show how the Hochschild cohomology is encoded in the Brauer graph.  In particular, we show that except in one low-dimensional case, the resulting Lie algebras are all solvable. \end{abstract}

\maketitle


\section{Introduction}

Gentle algebras have gained much traction in recent years due to their pivotal role in several different areas of mathematics. They appear in the guise of Jacobian algebras of quivers with potential from unpunctured marked surfaces in cluster theory \cite{Labardini, ABCP, DRS, Amiot, AG}, they play a role in certain gauge model theories in theoretical physics in the work of Cecotti, where they appear as Jacobian algebras of quivers and superpotentials 
corresponding  to a Gaiotto $A_1$-theory with only irregular punctures and at least one such puncture  
\cite{Cecotti} and they are an important element in homological mirror symmetry of 2-manifolds, in that the derived category of a differential graded smooth gentle algebra is equivalent to the partially wrapped Fukaya category of a surface with stops \cite{HKK, LP}.  In the trivially graded case these categories are equivalent to the bounded derived category of the  (ungraded) gentle algebra and based on the ribbon graph of a gentle algebra defined in \cite{S}, a geometric surface model of this category (coinciding with the model underlying the partially wrapped Fukaya category) has been given in \cite{OPS}. 

It is clear that the derived category of a gentle algebra is a key tool in this setting. By the work of Happel there is a fully faithful embedding of the  bounded derived category of a finite dimensional algebra into the stable module category of its repetitive algebra. The repetitive algebra is closely linked to the trivial extension of the algebra.  Therefore, understanding the trivial extension of an algebra $A$ works towards understanding the derived category of $A$.


In this paper we take this point of view by  determining the Lie algebra structure of the first Hochschild cohomology groups of both gentle algebras and their trivial extensions, which cover all Brauer graph algebras with trivial multiplicities.  We will exhibit an example of a Brauer graph algebra which can be obtained both as a trivial extension of a gentle algebra with finite global dimension and also as a trivial extension of a
gentle algebra with infinite global dimension, showing in this way that the global dimension is an invariant which in fact plays no role in the Lie algebra structure of the first Hochschild cohomology space of the trivial extension.

The Hochschild cohomology of an algebra $A$ is an important tool attached to the algebra. Given a $K$-algebra $A$, the graded vector space $HH^*(A)$ is a derived invariant \cite{Rickard}, as is the Hochschild homology $HH_*(A)$. However, $HH^*(A)$ has a richer structure:
	\begin{itemize}
		\item it is a graded commutative algebra via the cup product;
		\item it is endowed with the Gerstenhaber bracket:
		\[[-,-]:HH^m(A)\otimes HH^n(A) \rightarrow HH^{m+n-1}(A)\text{, for all } m,n\geq 0\]
		such that $(HH^*(A),[-,-])$ is a graded Lie algebra.
	\end{itemize}
These structures are related by the Poisson identity, meaning that the bracket is a graded biderivation with respect to the cup product.

All this together endows $HH^*(A)$ with a  {\em Gerstenhaber algebra structure}, which is also a derived invariant, see \cite{Keller}.
In particular, the first Hochschild cohomology space $HH^1(A)$ which is isomorphic to the quotient of the  derivations of $A$ modulo the inner derivations of $A$, 
becomes a Lie 
algebra whose bracket is induced by the commutator of derivations, and for all
$n\in \mathbb{N}$, $HH^n(A)$ is a Lie $HH^1(A)$-module.

In characteristic $p$, $HH^1(A)$ is a restricted Lie algebra, and this is again a derived invariant \cite{Zimmermann}. When working in characteristic different from $2$, the squaring maps $sq_i:HH^{2i}(A)\rightarrow HH^{4i-1}(A)$ are defined by $a\mapsto \frac{1}{2}[a,a]$, for $i\geq 1$. This gives $HH^*(A)$ the structure of  a {\em strict Gerstenhaber algebra} \cite{Gerstenhaber}. Note that in particular, $sq_1:HH^2(A)\rightarrow HH^3(A)$ provides possible obstructions to infinitesimal deformations.

In general, determining even the first Hochschild cohomology group of an algebra requires a fair amount of more or less involved calculations. Moreover, the structural invariants related the Gerstenhaber bracket are not easy to compute, since they are defined in terms of the Bar resolution. The cup product is better understood, since it can be expressed in terms of a diagonal $\Delta_A:P_A\rightarrow P_A \otimes_A P_A$, where $P_A$ is any projective resolution of $A$ as $A$-bimodule. Also, it coincides with the Yoneda product of extensions, which is sometimes useful for computations.

In spite of the existence of results obtained by Stasheff \cite{Stasheff}, Schwede \cite{Schwede} and Keller \cite{Keller}, the Gerstenhaber bracket remains elusive. 
Several examples of computations are known, see for instance \cite{Redondo-Roman}, where the authors prove that in characteristic different from $2$ the bracket of the class of an $n$ cocycle with the class of an $m$ cocycle is null for $n,m$ even and positive. Moreover, some methods have been developed recently by Negron and Witherspoon \cite{Negron-Witherspoon} and \cite{Negron-Witherspoon2}, Volkov \cite{Volkov},  Negron, Volkov and Witherspoon \cite{NVW} and, via a different approach, by Su\'arez-\'Alvarez \cite{Suarez}.

In this article we will give a description of the Lie algebra structure of the first Hochschild cohomology space of gentle algebras and any Brauer graph algebra with multiplicity $1$, using the fact that the latter algebras are 
trivial extensions of gentle algebras. In particular, we will show how the ribbon graph of a gentle algebra and the Brauer graph of the Brauer graph algebra encode the bracket.
Furthermore, we show the following. 

{\bf Theorem.} (see Thm. \ref{KroneckerTh}) {\it
		Let $A$ be a gentle algebra $K$-algebra, and let $B$ be the Brauer graph algebra isomorphic to $TA$. Suppose ${\rm char }  K\neq 2$. Then
		\begin{enumerate}
			\item $HH^1(A)$ is solvable if and only if $A$ is not the Kronecker algebra. 
			\item $HH^1(B)$ is solvable if and only if $B$ is not isomorphic to the trivial extension of  the Kronecker algebra.  
\end{enumerate}
}

The aforementioned  correspondence allows us to use results by Cibils, Marcos, Redondo and Solotar \cite{CMRS} and Cibils, Redondo and Saor\'\i n \cite{CRS} that provide an explicit decomposition of the first cohomology space of a trivial extension algebra $A$ in terms of the centre of the algebra, its Hochschild  cohomology in degree $1$,  its Hochschild  homology in degree $1$ and another space denoted $Alt_A(DA)$ that we will recall in Section \ref{Alt}. We will also use the description of the bracket given by Strametz \cite{Str}.
More precisely, given a gentle algebra $A$, the trivial extension $TA=A\ltimes DA$ of $A$ by its minimal cogenerator $DA$ is a symmetric special biserial algebra \cite{PS,R, SJ} and hence a Brauer graph algebra. There is an embedding of $HH^1(A)$ into $HH^1(TA)$ --see \cite{Str}--, given by the map sending the class  of a $K$-linear derivation $\varphi_{A,A}:A \rightarrow A$ into $(-D(\varphi_{A,A}),0,\varphi_{A,A},0)$ in 
$HH^1(TA)\cong Z(A)\oplus HH_1(A)^*\oplus HH^1(A)\oplus Alt_A(DA)$ where $HH_1(A)^*$ denotes the $K$-dual of $HH_1(A)$ and we identify the centre $Z(A)$ 
with $Hom_{A-A}(DA,DA)$ and $D(\varphi_{A,A}):DA\rightarrow DA$ is such that  
$D(\varphi_{A,A})(f)=f\circ\varphi_{A,A}$. 

Gentle algebras and Brauer graph algebras are -- as it is well-known-- algebras of tame representation type, their first Hochschild cohomology space is proved here to be a solvable Lie algebra. It is already known that this happens for other tame algebras, such as the special biserial algebras considered in \cite{MNPRS} and the toupie algebras with no branches of length one --that is, exactly the tame ones--, see \cite{ALS}.  In view of these results, in an earlier version of the paper we posed the following question:

{\bf Question:} Is it true that, except in some low dimensional cases,  the first Hochschild cohomology space of any finite dimensional algebra of tame representation type is a solvable Lie algebra?

Through subsequent work \cite{ER,RSS}, further examples of non-solvable first Hochschild cohomology as  a Lie algebra both for tame and wild algebras have appeared. However, 
so far the only contribution to the first Hochschild cohomology as a Lie algebra of a finite dimensional algebra are solvable Lie algebras and Lie algebras of type $A$, that is $\mathfrak{sl}(m,K)$ and $\mathfrak{gl}(m,K)$.

We will assume that all algebras are indecomposable and finite dimensional. 


{\bf Acknowledgement:}  We thank the referee for the many comments and suggestions which have considerable improved the exposition of the paper. We also thank them for pointing out a mistake in Theorem \ref{HH^1} (4) in an earlier version of the paper.
\section{Background}

Let $A$ be a finite dimensional associative unital algebra, denote by $DA$ its $K$-linear dual endowed with 
the usual $A$-bimodule structure. In what follows, we will write $V^*$  to denote the $K$-linear dual of the vector space $V$.
Following \cite{CMRS}, we denote  by  $Alt_A (DA)$ the set of skew-symmetric bilinear forms $\alpha$ over $DA$ such that $\alpha(fa, g) = \alpha (f, ag)$ for all $a\in A$ and $f,g \in DA$.
Recall from \cite{CMRS} that there is an isomorphism of vector spaces
\[HH^1(TA) \cong Z(A) \oplus HH_1(A)^* \oplus HH^1(A) \oplus Alt_A(DA)\] and that, in particular, we have  $HH^1(TA) \neq 0$, since $Z(A)$ is never zero.  
The Hochschild homology and cohomology of monomial algebras have been widely studied. In \cite{CRS}, the authors provide the dimensions of $HH_1(A)$
and $Alt_A(DA)$ in terms of the quiver of $A$ when $A$ is monomial. In this article we give a description of these spaces and provide bases for the four summands that are particularly adapted to make a link with the ribbon graph of the gentle algebra.

We will now briefly recall the definition of gentle algebras and that of Brauer graph algebras. 

\begin{defn}
An algebra $A$ is called \emph{gentle} if it is Morita equivalent to an algebra $KQ/I$ such that 
\begin{enumerate}
\item for every vertex $v \in Q_0$, there are at most two arrows arriving and two arrows starting at $v$,
\item for every arrow $a \in Q_1$, there exists at most one arrow $b \in Q_1$ such that $ab \notin I$ and at most one arrow $c \in Q_1$ such that $ca \notin I$,
\item for every arrow $a \in Q_1$, there exists at a most one arrow $b \in Q_1$ such that $t(a) = s(b)$  and $ab \in I$ and at most one arrow $c \in Q_1$ such that $t(c) = s(a)$ and $ca \in I$,
\item $I$ is generated by paths of length 2.
\end{enumerate}
\end{defn}

Brauer graph algebras are defined by a ribbon graph, referred to as the Brauer graph of the Brauer graph algebra, with a vertex decoration by strictly positive integers. In this paper all the Brauer graph algebras are trivial extensions of gentle algebras and therefore all vertex decorations are equal to 1, see \cite{S}. Therefore we can omit the vertex decorations altogether (implicitly assuming that they are equal to 1) and we only give the definition of Brauer graph algebras in this case. 

\begin{defn}
Let $\Gamma$ be a finite ribbon graph (that is a finite unoriented graph with at each vertex, a cyclic ordering of the edges incident with that vertex). The Brauer graph algebra $\Lambda_\Gamma$ is the algebra $KQ/I$
where the quiver $Q$ is such that 
\begin{enumerate}
\item the vertices of $Q$ correspond to the edges of $\Gamma$
\item there is an arrow from vertex $v$ to vertex $w$ in $Q$ if the corresponding edges $E_v$ and $E_w$ in $\Gamma$ are incident with the same vertex $m$ of $\Gamma$ and if $E_w$ directly follows $E_v$ in the cyclic ordering of the edges incident with $m$.
\end{enumerate}
The definition of the ideal $I$ is  based on the observation that for every vertex $v$ in $Q$, there are two cycles $C_{v,m}$ and $C_{v,n}$ in $Q$ starting at vertex $v$, which are induced by the cyclic orderings of the edges incident with the vertices $m$ and $n$ of the edge $E_v$ of $\Gamma$ corresponding to $v$. If one of the vertices  of $E_v$ is of valency 1 then corresponding cycle is trivial. 
The ideal $I$ is generated by the following relations  
\begin{enumerate}
\item 
for every $v \in Q_0$ such that both cycles $C_{v,m}$ and $C_{v,n}$  are non-trivial, we have the relation $C_{v,m}-C_{v,n}$,
\item for every $v \in Q_0$, we have the relations $C_{v,m}\alpha$ and $C_{v,n}\beta$ where $\alpha $ is the first arrow of $C_{v,m}$ and $\beta$ is the first arrow of  $C_{v,n}$,
\item any path of length two,           which is not a subpath of any $C_{v,m}$ for all $v \in Q_0$ and all $m \in \Gamma$, is a relation.
\end{enumerate}
\end{defn}

Recall from \cite{S} that if $\Lambda$ is a Brauer graph algebra  with multiplicity function identically equal to one, then there exists a gentle algebra $A$ such that $\Lambda = TA$. 
 
 We recall this in more detail now. For that 
let $p$ be a path in $Q$, if $p = p_2  e_v p_1$ for  possibly trivial paths $p_1$ and $ p_2$ then we write $v \in p$.  

Let $A$ be a gentle algebra and let $\cM$ be a basis of $  \soc_{A^e} A$ where $A^e = A \otimes_K A^{op}$. Since $A$ is monomial, a basis of  $\soc_{A^e} A$ is given by images of maximal paths $p$ in $Q$ which are maximal in the sense that for any arrow $a \in Q_1$, both $a p $ and $p a$ are in $I$ . Denote by $V_0$ the set of vertices $v$ in $Q$ such that one of the 
following holds 
\begin{itemize}
 \setlength\itemsep{-.5em}
\item  $v$ is a source of a single arrow in $Q$,
\item $v$ is a sink of a single arrow in $Q$, 
\item $v$ is the start of an arrow $a$ and target of an arrow $b$ where $ab \not \in I$ and there is no other arrow starting or ending at $v$. 
\end{itemize}
Set $\overline{\cM} = \cM \cup \{e_v \mid v \in V_0 \}$. Note that  every vertex of $Q$ appears exactly twice in $\overline{\cM}$. 
Given an element $m$ in $\cM$ denote by $\beta_m$ a new arrow such that $s(\beta_m) = t(m)$ and $t(\beta_m) = s(m)$. Then $Q \cup \{\beta_m \vert m \in \cM \}$ is the quiver of $TA$. We recall the following definitions from \cite{S} and \cite{SchrollSurvey}. 

\begin{definition} {\rm 
We define the \emph{ribbon graph of a gentle algebra $A = KQ/I$}  to be the graph with vertices given by
$\overline{\cM}$ and for $m, m' \in \overline{\cM}$ there is an edge between $m$ and $m'$ if there exists a vertex $v \in Q$ such that $v \in m$ and $v \in m'$. The cyclic ordering of the edges at vertex $m$ is given by the cyclic closure of the linear order of the vertices induced by $m$.

That is if $m = e_{v_1} a_1 e_{v_2} a_2 \ldots a_n e_{v_{n+1}}$ then the corresponding edges in the ribbon graph are cyclically ordered $E_{v_1} < E_{v_2} < \cdots < E_{v_{n+1}} < E_{v_1}$. 

The \emph{marked ribbon graph $\Gamma_A$ of $A$} is given by marking two successive edges of the ribbon graph of $A$ corresponding to $s(\beta_m)$ and $t(\beta_m)$. We often write $\Gamma$ for $\Gamma_A$ if the algebra is clear from the context and we write $\Gamma_0$ for the vertices of $\Gamma$ and $\Gamma_1$ for the edges of $\Gamma$. 
}
\end{definition}

It directly follows from the definitions that the edges of $\Gamma_A$ are in bijection with the vertices of $Q$. 

\begin{Example}
{\rm Let $A = K( \xymatrix{ e_1 \ar@<-.5ex>[r] \ar@<.5ex>[r] & e_2 } )$ be the Kronecker quiver, then the marked ribbon graph $\Gamma_A$ is given by}
\vspace{-0.4cm}
	\begin{figure}[H]
		\centering {\footnotesize
			\tikzstyle{help lines}+=[dashed]
			\begin{tikzpicture}[auto, thick,scale=1]+=[dashed]
				\clip (-0.5,-.9) rectangle (2.5,0.8);
				\node[cg] (1) at (0:0){\,\,\,};
				\node[cg] (2) at ($(1)+(0:2)$){\,\,\,};
				\node () at ($(1)-(0:0.4)$){$\times$};
				\node () at ($(2)-(0:0.4)$){$\times$};
				
				\draw[-] (1) to [out=35, in=145] node {$e_1$} (2);
				\draw[-] (1) to [out=-35, in=-145] node [below] {$e_2$} (2);
				\draw[->] (1)+(25:.5cm) arc (25:-25:.5cm);
				\draw[->] (2)+(140:.4cm) arc (140:-140:.4cm);
			\end{tikzpicture}
			\caption{ Marked ribbon graph of the Kronecker algebra. For illustration purposes we have also included the arrows of the quiver of the algebra.}}
	\end{figure}
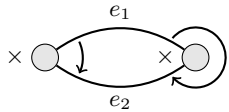
\end{Example}

\section{Components of the first cohomology group of trivial extensions of gentle algebras}

From now on assume that $A = KQ/I$ is a gentle algebra where $I$ is generated by a minimal set $R$ of paths of length $2$, $Q$ is a connected quiver and  $A$ is not a point or a single loop. The cases of a point and a single loop will be treated separately in Section~\ref{tpc}. 

Let $\cB$ be the set of paths of $Q$ which do not contain any path of $R$ as a subpath. Note that $\cB$ is a $K$-basis of $A$.  We will interpret the first and second summands of the decomposition of $HH^1(TA)$ in terms of parallel paths (see \cite{Str}); that is, paths which share source and target.

\begin{definition}\label{def::spaces}{\rm 
		Two paths $p, q$ of $Q$ are called {\em parallel} if $s(p)=s(q)$ and $t(p)=t(q)$. If $X$ and $Y$ are sets of paths of $Q$, the {\em set $X\vert \vert Y$  of parallel paths} is formed by the pairs $(p,q)$ in $X\times Y$ such that $p$ and $q$ are parallel paths. We denote by $K(X\vert \vert Y)$ the vector space with basis the set $X\vert \vert Y$.}
\end{definition}

 Given a path $p$ in $Q$ and $(a,q)\in Q_1\vert \vert \cB$, we denote by $p^{(a,q)}$ the sum of all nonzero paths where each term in the sum is obtained by replacing exactly one appearance of the arrow $a$ in $p$ by the path $q$. If the path $p$ does not contain the arrow $a$ or if every replacement of $a$ in $p$ by $q$ is not a path in $\cB$, we set $p^{(a,q)}=0$. The function $\chi_\cB:\bigcup Q_n\rightarrow \{0,1\}$ denotes the indicator function which associates $1$ to each path $p\in \cB$ and $0$ to $p\notin \cB$.
 Given a graph $G$  (or a quiver $Q$) and $v$ a vertex in $G$ (or $Q$), denote by $\val (v)$ the number of edges (or arrows) incident with $v$. Then for any graph $G$, we have $2  \vert G_1\vert = \sum_{v \in G_0} \val (v)$.

\begin{proposition} \cite[Corollaire  2.2.1.9]{Str}\label{prop::str}
	Let $A$ be a monomial algebra. The Hochschild cohomology of $A$ in low degrees is the cohomology of the cochain complex
	\[0\longrightarrow K(Q_0\vert \vert \cB)\xrightarrow{d^0} K(Q_1\vert \vert \cB) \xrightarrow{d^1} K(R\vert \vert \cB)\rightarrow...\]
	where 
	\begin{equation}
	d^0(e,p)=\sum_{a\in Q_1e} \chi_\cB(ap)(a,ap)-\sum_{a\in eQ_1} \chi_\cB(p a)(a,pa),
	\label{d0}
	\end{equation}
	\begin{equation}
	d^1(a,p)=\sum_{q\in R} (q,q^{(a,p)}).
	\label{d1}
	\end{equation}
\end{proposition}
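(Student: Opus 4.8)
The plan is to obtain the displayed complex by dualising an explicit projective bimodule resolution of $A$ and then to identify the two resulting differentials with \eqref{d0} and \eqref{d1}. Write $E = KQ_0$ for the separable subalgebra spanned by the vertices. The starting point is Bardzell's minimal projective resolution of a monomial algebra as a bimodule over itself, whose first terms read
\[\cdots \longrightarrow A\otimes_E KR\otimes_E A \xrightarrow{\,d_2\,} A\otimes_E KQ_1\otimes_E A \xrightarrow{\,d_1\,} A\otimes_E A \xrightarrow{\,\mu\,} A\longrightarrow 0,\]
where $\mu$ is the multiplication, $d_1(1\otimes a\otimes 1)=a\otimes 1-1\otimes a$, and, for a minimal relation $r=a_1\cdots a_\ell\in R$,
\[d_2(1\otimes r\otimes 1)=\sum_{i=1}^{\ell} a_1\cdots a_{i-1}\otimes a_i\otimes a_{i+1}\cdots a_\ell.\]
Because $HH^0$ and $HH^1$ depend only on the tail $P_2\to P_1\to P_0$, and because the sets of associated paths indexing these three terms are exactly $Q_0$, $Q_1$ and $R$, these three terms are all that is needed; the phrase ``in low degrees'' precisely reflects that from homological degree $3$ on the associated paths (overlaps of relations) are more intricate and are not described by this pattern.

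Next I would apply $\Hom_{A^e}(-,A)$. Using the tensor--hom adjunction $\Hom_{A^e}(A\otimes_E V\otimes_E A,\,A)\cong \Hom_{E\text{-}E}(V,A)$ for an $E$-bimodule $V$, each term becomes a space of $E$-bimodule maps. Since $V=KW$ for $W\in\{Q_0,Q_1,R\}$ has a basis of elements each carrying a well-defined source and target vertex, an $E$-bimodule map $KW\to A$ is freely determined by sending every $x\in W$ into $e_{t(x)}Ae_{s(x)}$, whose basis is the set of paths in $\cB$ parallel to $x$. This yields the natural identifications $\Hom_{E\text{-}E}(KW,A)\cong K(W\| \cB)$, turning the dual complex into
\[0\longrightarrow K(Q_0\| \cB)\xrightarrow{d^0} K(Q_1\| \cB)\xrightarrow{d^1} K(R\| \cB)\longrightarrow\cdots,\]
as required; it then remains only to identify the transposed differentials.

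Finally I would compute $d^0$ and $d^1$ by transposing $d_1$ and $d_2$. For $d^0$, the pair $(e,p)$ corresponds to the bimodule map sending $e=e_v\mapsto p\in e_vAe_v$ and the other vertices to $0$; evaluating its composite with $d_1$ on a generator $1\otimes a\otimes 1$ produces the commutator $ap-pa$ with the source/target constraints on $a$, so that $a$ receives the coefficient $(a,ap)$ when $s(a)=v$ and $-(a,pa)$ when $t(a)=v$, each surviving only when the substituted path lies in $\cB$. This is exactly \eqref{d0}, i.e.\ $d^0$ is the inner-derivation map, whence $H^0$ of the complex is $Z(A)=HH^0(A)$ and $H^1$ is $HH^1(A)$. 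For $d^1$, the pair $(a,p)$ corresponds to the $E$-derivation sending $a\mapsto p$ and every other arrow to $0$; transposing $d_2$ and applying the Leibniz rule to a relation $q=a_1\cdots a_\ell$ replaces, one occurrence at a time, each appearance of $a$ by $p$ and sums the nonzero results, which is precisely $q^{(a,p)}$, giving \eqref{d1}. The main obstacle is bookkeeping rather than conceptual: one must fix the left/right $E$-module conventions and signs consistently through the adjunction so that the two terms of $d_1$ land on the two sums of \eqref{d0} with the correct signs, and one must verify that the interaction of the Leibniz rule with the monomial relations is accounted for exactly by the substitution operator $p\mapsto q^{(a,p)}$ together with the indicator $\chi_\cB$ discarding paths that fall into $I$.
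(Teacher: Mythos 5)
Your proposal is correct: dualising the low-degree part of Bardzell's minimal bimodule resolution via the adjunction $\Hom_{A^e}(A\otimes_E V\otimes_E A, A)\cong \Hom_{E\text{-}E}(V,A)$ and transposing the differentials is exactly how this complex is obtained. The paper gives no proof of its own (it cites Strametz, Corollaire 2.2.1.9), and your argument is essentially the one in that reference, so there is nothing further to compare.
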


\subsection{Centre of a gentle algebra }
For the convenience of the reader we briefly recall the structure of the centre of a gentle algebra. 
\begin{lemma}\label{basisZ(A)}
Let $A = KQ/I$ be a gentle algebra. The set of the following elements is a basis of $Z(A)$.
\begin{enumerate}
	\item $\sum_{e\in Q_0} e$,
	\item $p=a_n \ldots a_1$ in $\cB$ such that  $t(a_n) = s(a_1)$,   $a_1 a_n \in I$ and $\val (t(a_n))= 2$. 
\end{enumerate}
\end{lemma}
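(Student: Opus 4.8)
The plan is to compute $Z(A)$ directly from the definition $Z(A)=\{z\in A : za=az \text{ for all } a\in A\}$ (equivalently, as $\ker d^0$ in the complex of Proposition \ref{prop::str}), using that $A$ is generated as an algebra by $Q_0\cup Q_1$, so that it suffices to impose commutation with every idempotent $e_v$ and every arrow. Writing a candidate central element in the basis $\cB$ as $z=\sum_{p\in\cB}\lambda_p\, p$, the first step is to commute with the idempotents: comparing the supports of $e_v z$ and $z e_v$ for each $v\in Q_0$ forces $s(p)=t(p)$ whenever $\lambda_p\neq 0$. Hence every central element is a linear combination of trivial paths $e_v$ and of genuine (oriented) cycles lying in $\cB$. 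Splitting $z=z_0+z_1$ accordingly, commuting $z_0=\sum_v\mu_v e_v$ with an arrow $a$ yields $\mu_{s(a)}=\mu_{t(a)}$; since $Q$ is connected this gives $z_0\in K\cdot\sum_{e\in Q_0}e$, producing the generator in (1).

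The remaining task is to identify the central cycles, and this is where I would spend most of the effort. First I would record a structural fact forced by $\dim_K A<\infty$: for any genuine cycle $p=a_n\cdots a_1\in\cB$ with $t(a_n)=s(a_1)=:v$, all the internal junctions $a_{i+1}a_i$ lie outside $I$ (otherwise $p\notin\cB$), so if the closing junction $a_1a_n$ were also outside $I$ then every power of the cyclic word would belong to $\cB$, contradicting finite dimensionality; thus $a_1a_n\in I$ automatically. It then remains to see that centrality is governed precisely by the valency of $v$. If $\val(v)=2$, then $a_1$ and $a_n$ are the only arrows at $v$, and since $a_1a_n\in I$ one checks $ap=pa=0$ for every arrow $a$: the only candidates $a_1p$ and $pa_n$ both vanish because they create the forbidden subpath $a_1a_n$. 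Hence such $p$ is central, giving the generators in (2). Conversely, if $\val(v)>2$ there is an extra arrow at $v$; because the gentle conditions (1) and (3) allow $a_n$ (resp. $a_1$) at most one continuation (resp. precursor) inside $I$, and that slot is already occupied by the relation $a_1a_n\in I$, any extra arrow $b$ at $v$ must satisfy $ba_n\notin I$ (resp. $a_1b\notin I$), so that $bp$ (resp. $pb$) is a nonzero element of $\cB$.

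The hard part is then to show that this nonzero ``error term'' cannot be cancelled, so that $\lambda_p=0$ for such a $p$. For this I would analyse, for a fixed arrow, the identity $a'z_1-z_1a'=0$: a term $a'q$ appearing in $a'z_1$ can only cancel against a term $q'a'$ of $z_1a'$, and comparing the two words arrow by arrow shows that $a'q=q'a'$ forces $q'$ to be the cyclic rotation of $q$ beginning with $a'$. Since the extra arrow $b$ above is not one of the arrows $a_1,\dots,a_n$ of $p$, no such rotation occurs in the support, so the word $bp$ survives in $bz_1-z_1b$ with coefficient $\lambda_p$, forcing $\lambda_p=0$. This rules out every cycle with $\val(v)>2$ and, together with the cases above, shows that the cycles listed in (2) span the cyclic part of $Z(A)$. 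Finally, linear independence is immediate, since all the listed elements are distinct elements of $\cB$ (the idempotent sum involves only trivial paths, the cycles only paths of positive length), completing the proof. The subtle point throughout --- and the main obstacle --- is precisely this rotation/cancellation bookkeeping, which relies essentially on the gentle conditions to forbid the branchings that would otherwise let error terms cancel.
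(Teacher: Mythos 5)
Your proposal is correct and follows essentially the same route as the paper: the paper also works with the kernel of the degree-zero differential of Strametz's complex (which is exactly the commutator map you use), verifies that cycles based at a valency-$2$ vertex are killed by every arrow because the closing relation $a_1a_n\in I$ annihilates the only candidate products, shows that a cycle based at a vertex of valency $>2$ produces terms $(b,bp)$ or $(b,pb)$ whose supports cannot overlap for distinct cycles (your rotation/cancellation bookkeeping), and reduces the idempotent part to a rank/connectedness argument yielding $\sum_{e\in Q_0}e$. The only cosmetic difference is that you note explicitly that $a_1a_n\in I$ follows from finite-dimensionality, which the paper leaves implicit.
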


\begin{proof}
	Let $p$ be a cycle in $\cB$ with length $n\geq 1$ and $d^0$ as in \eqref{d0}, it is clear that if $(e,p)\in Q_0\vert \vert \cB$ and $\val(e)=2$, then $(e,p)\in \Ker d^0$.
	
	Note that  if $(e,p)\in Q_0\vert \vert \cB$ with  $\val (e)> 2$ and $n\geq1$, then  
	\begin{equation*}
	d^0(e,p)= \left\lbrace
	\begin{array}{ll}
	(b_2,b_2p) & \text{ if } b_2p\in \cB \text{ and }  \val(e)=3,\\
	-(b_1,p b_1)& \text{ if } p b_1\in \cB \text{ and }  \val(e)=3,\\
	(b_2,b_2p)-(b_1,p b_1)& \text{ if }  b_2p, p b_1\in \cB \text{ and } \val(e)=4,
	\end{array}\right.
	\end{equation*}
where $b_1$ and $b_2$ are arrows which do not appear in $p$ and at least one of them exists.
\begin{figure}[H]
	{\footnotesize
	\centering \tikzstyle{help lines}+=[dashed]
	\begin{tikzpicture}[auto, thick,scale=0.9]+=[dashed]
	\node (0) at (0:0) {$\bullet$};
	\node (1) at ($(0)+(0:2)$) {$e$};
	\node (2) at ($(1)+(0:2)$) {$\bullet$};	
	\node () at ($(2)+(0:.5)$) {$...$};
	\node () at ($(0)-(0:.5)$) {$...$};
	\draw[->,dashed] (1) to [out=130, in=50, looseness=10] node {$p$}(1);
	\draw[->] (0) to node {$b_1$} (1);
	\draw[->] (1) to node {$b_2$} (2);
	\draw[style=help lines] (1) +(200:.45cm) arc (200:350:.45cm);
	\draw[style=help lines] (1) +(55:.55cm) arc (55:130:.55cm);
	\end{tikzpicture}
	\caption{Locally $Q$ has this shape if both arrows $b_1$ and $b_2$ exist.}}
\end{figure}
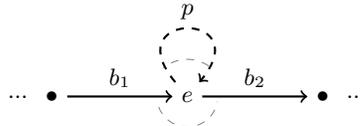
Now, suppose that $(e,p)\neq(e',p')$ and both satisfy the above conditions, then the summands of $d^0(e,p)$ do not appear amongst the summands of $d^0(e',p')$. Indeed, if $(b_1,pb_1)=(b'_1,p'b'_1)$, then $b_1=b'_1$ and $p=p'$, hence $e=s(p)=s(p')=e'$, which is a contradiction (similarly if  $(b_2,b_2p)=(b'_2,b'_2p')$); if $(b_2,b_2p)=(b'_1,p'b'_1)$, then we have $b_2=b'_1$ and $b_2p=p'b_2$, thus $p'=b_2q$ and $p=q'b_2$ for some paths $q, q'\in \cB_{n-1}$, but $b_2qb_2=b_2q'b_2$ is not possible in $A$.

On the other hand, note that $d^0(e,e)=\sum_{b\in Q_1e}(b,b)-\sum_{b\in eQ_1}(b,b)=0$ if and only if  $\val(e)=2$ and $e$ is the vertex of a loop. Finally, if $Q$ has at least two vertices and is connected, then the associated matrix to the system $d^0(e,e)=0$ has column vectors with two inputs different to zero and with values $1$ and $-1$, or the zero vector that correspond to a loop, and row vectors with at most four inputs nonzero, with at most two inputs equal to $-1$ and with at most two inputs equal to $1$. Furthermore, the rank is $|Q_0|-1$ and
\[\sum_{e\in Q_0}d^0(e,e)=\sum_{e\in Q_0}\left( \sum_{b\in Q_1e}(b,b)-\sum_{b\in eQ_1}(b,b)\right) =0.\]
\end{proof}
The following local configurations in the ribbon graph of $A$ each contribute an element to a basis of  $Z(A)$.  
\begin{figure}[H]
			\centering
			\begin{tikzpicture}[xscale=1.1, auto, thick]+=[dashed]
			\clip (-0.5,-2.5) rectangle (9.4,1.6);
			\node (1) at (0:0){$Id_A$};
			{\normalsize \node () at (-90:1.9){(1)};}
			{\footnotesize
			\node[cg] (2) at ($(1)+(0:2.5)$){$a_1$};
			\node () at ($(2)-(0:0.4)$){$\times$};
			\node () at ($(2)+(0:0.8)$){$a_1$};
			\draw[-] (2) to [out=40, in=-40, looseness=25] node {}(2);
			\draw[->] (2) +(33:.6cm) arc (33:-33:.6cm);
			{\normalsize \node () at ($(2)+(0:0.9)+(-90:1.9)$){(2.a)};}
			\node[cg] (3) at ($(2)+(0:5)$){$p$};
			\node () at ($(3)+(0:.6)$){$\times$};
			\draw[-] (3) to [out=40, in=-40, looseness=25] node {}(3);
			\draw[-] (3) to node {} ($(3)+(120:1.7)$);
			\draw[-] (3) to node {} ($(3)+(150:1.55)$);
			\draw[-] (3) to node {} ($(3)+(210:1.55)$);
			\draw[-] (3) to node {} ($(3)+(240:1.7)$);
			
			\node () at ($(3)+(280:1.1)$){$a_1$};
			\draw[->] (3)+(315:.8cm) arc (315:245:.8cm);
			\node () at ($(3)+(225:1.1)$){$a_2$};
			\draw[->] (3)+(235:.8cm) arc (235:215:.8cm);
			\draw[-,dashed] (3)+(205:.8cm) arc (205:155:.8cm);
			\node () at ($(3)+(137:1.15)$){$a_{n-1}$};
			\draw[->] (3)+(145:.8cm) arc (145:125:.8cm);
			\node () at ($(3)+(80:1.1)$){$a_{n}$};
			\draw[->] (3)+(115:.8cm) arc (115:45:.8cm);
			{\normalsize \node () at ($(3)+(-90:1.9)$){(2.b)};}
			}
	\end{tikzpicture}
\caption{Local configurations in the marked ribbon graph of a gentle algebra, each giving rise to a basis element of $Z(A)$.} 
	\label{fig:config 1}
\end{figure}
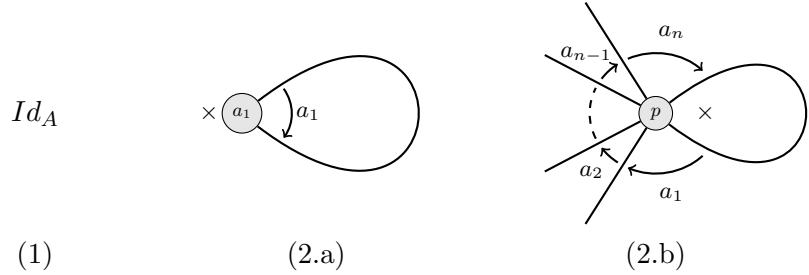

\subsection{$HH^1(A)$ of a gentle algebra $A$}

Let $A = KQ/I$ be a gentle algebra. In the case of a gentle algebra which is given by a Jacobian algebra of a quiver with potential from a triangulation of an unpunctured surface, in \cite{Valdivieso}  the Hochschild cohomology and a  geometric interpretation have been given. Furthermore, in addition to \cite{Ladkani1}, the second author has had a private communication from S. Ladkani detailing  the Hochschild cohomology of a gentle algebra  \cite{Ladkani2}. 

The case of a  gentle algebra consisting of a single loop with relation or a single point are being treated as a separate case in Section~\ref{tpc}, this is in order to keep the statements as concise as possible. 

The following theorem is the main result of this section. The interpretation of the different cases in Theorem \ref{HH^1}  in terms of the ribbon graph of the gentle algebra is given in  Figures \ref{fig:config 4}, \ref{fig:config 5}, \ref{fig:config 6} and the notion of  {\em fundamental cycle} is defined in \ref{subsection::fundamental cycles} below. In the following, we use the notation introduced in Definition~\ref{def::spaces}.

\begin{theorem}\label{HH^1} Let $A = KQ/I$ be a gentle algebra with basis of paths $\cB$. The following elements $(a,q) \in K(Q_1 \vert \vert   \cB)$ give a basis of $HH^1(A)$. 
	\begin{enumerate}
		\item $(a,q)$ such that $a$ does not appear in $q$ and $a$ is not in any relation in $I$. We say that $a$ is a shortcut for $q$. 
		\item $(a,q)$ such that $q = q_2 a q_1$ with $q_1, q_2 \notin Q_0$ and $a$ is not in any relation in $I$. We say that $q$ is a deviation via $a$. 
		\item $(a,a)$ where we choose exactly one arrow $a$ in each fundamental cycle in $Q$.
		\item If ${\rm char } K=2$ in addition to the above, $(a,s(a))$ such that $a$ is a loop.
	\end{enumerate}
\end{theorem}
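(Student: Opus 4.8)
The plan is to compute $HH^1(A)$ as the cohomology $\Ker d^1/\Im d^0$ of the complex of Proposition~\ref{prop::str}, organising the bookkeeping around a clean splitting of the coboundaries. Observe first that $d^0(e,e)$ produces only \emph{diagonal} pairs $(a,a)$ (since $a\,e=e\,a=a$), whereas $d^0(e,p)$ for a nontrivial cycle $p$ produces only pairs $(a,ap)$, $(a,pa)$ in which the arrow $a$ occurs at an end of a path of length $\geq 2$. Thus $\Im d^0$ is the direct sum of a diagonal part $D'$, spanned by the $d^0(e,e)$, and an end part $B'$, spanned by the $d^0(e,p)$ with $p\neq e$. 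Writing $K(Q_1\vert\vert\cB)=D\oplus B\oplus C$, where $D$ is spanned by the diagonals $(a,a)$, $B$ by the pairs $(a,q)$ with $q$ of length $\geq 2$ whose first or last arrow is $a$, and $C$ by all remaining pairs, we have $D'\subseteq D$ and $B'\subseteq B$. The families in the statement lie in pairwise different summands: shortcuts (1) and deviations (2) lie in $C$, the diagonals of (3) lie in $D$, and the loop elements (4) lie in $C$ as well (their second coordinate is a trivial path). Consequently cases (1), (2) and (4) map injectively into cohomology, and the only summand requiring a rank computation is $D$.

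Next I would determine $\Ker d^1$. Because distinct relations $q\in R$ index distinct basis vectors in the first coordinate of $K(R\vert\vert\cB)$, the equation $d^1(a,p)=\sum_{q\in R}(q,q^{(a,p)})=0$ is equivalent to $q^{(a,p)}=0$ for every relation $q$ containing $a$. If $a$ occurs in no relation this is automatic, so every such $(a,p)$ lies in the kernel; sorting these by how $a$ occurs in $p$ gives exactly the shortcuts ($a\notin p$, case (1)), the deviations ($a$ strictly interior, case (2)), the end-elements ($a$ first or last, in $B$), and the diagonal $(a,a)$. If $a$ does occur in a relation, I would use the gentle axioms to evaluate $q^{(a,p)}$: replacing $a$ by $p$ in a length-two relation creates one new length-two junction at each affected end, and conditions (2)--(3) of the definition force such a junction to lie in $I$ precisely when the corresponding outer arrow of $p$ is again $a$. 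This shows that the kernel elements with $a$ in a relation are the diagonals $(a,a)$ (for which $q^{(a,a)}$ reproduces $q\notin\cB$), together with end-elements of $B$, and, when $q=a^2$ is a loop relation, the element $(a,s(a))$ with $q^{(a,s(a))}=2a$, which vanishes exactly in characteristic $2$; this last is case (4), and a finiteness argument shows every loop of a gentle algebra carries such a relation.

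It then remains to handle the summands $D$ and $B$ modulo $\Im d^0$. For $B$ I would prove that $B\cap\Ker d^1\subseteq B'=\Im(d^0\vert_{\text{nontrivial cycles}})$, i.e. every end-element in the kernel is a coboundary; concretely, an end-element $(a,q)$ with $q=q'a$ or $q=aq'$ arises, up to sign and one other end-element, as $d^0$ applied to the cycle $q'$, and a bookkeeping argument over the local configurations (as in the figures accompanying Lemma~\ref{basisZ(A)}) shows surjectivity onto the kernel part of $B$. Hence $B$ contributes nothing to cohomology. For $D$, the coboundaries $d^0(e,e)=\sum_{b\in Q_1e}(b,b)-\sum_{b\in eQ_1}(b,b)$ form the incidence coboundary of the underlying graph of $Q$, of rank $|Q_0|-1$ by the connectedness argument already used in Lemma~\ref{basisZ(A)}; therefore $\dim\bigl(D/D'\bigr)=|Q_1|-|Q_0|+1$ is the cycle rank of $Q$, and choosing one arrow in each fundamental cycle (\ref{subsection::fundamental cycles}) yields representatives of a basis of this quotient, giving case (3). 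Collecting cases (1)--(4) now exhausts $\Ker d^1$ modulo $\Im d^0$ and, by the summand-disjointness above, they are linearly independent.

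I expect the main obstacle to be the two combinatorial claims of the middle step: the exact evaluation of $q^{(a,p)}$ for $a$ lying in a relation, and the surjectivity statement $B\cap\Ker d^1\subseteq B'$. Both hinge on a careful, convention-sensitive use of the gentle axioms to decide when a path obtained by substitution or by pre/post-composition with an arrow still avoids every relation and hence remains in the basis $\cB$; in particular, the appearance of kernel elements of the form $(a,q)$ with $a$ occurring in a relation and at an end of a longer path $q$ must be shown to be coboundaries rather than new classes. Finally, matching the number of fundamental cycles to the rank $|Q_1|-|Q_0|+1$ of $D/D'$ must be done with care so that the chosen diagonals genuinely form a basis of the diagonal part of the cohomology.
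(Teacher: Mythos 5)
Your proposal is correct and follows essentially the same route as the paper: both compute the cohomology of the Strametz complex, identify the cocycles by evaluating the substitutions $q^{(a,p)}$ (your ``junction'' analysis is exactly the content of Proposition~\ref{lc}), show that kernel elements whose second coordinate begins or ends with $a$ are coboundaries (the paper's cases (ii)--(iii)), and extract the diagonal classes from the rank-$(|Q_0|-1)$ incidence system $d^0(e,e)$, which the paper carries out via the branch/skeleton/heart lemmas culminating in Lemma~\ref{D}. Your explicit splitting $K(Q_1\,\vert\vert\,\cB)=D\oplus B\oplus C$ with $\Im d^0\subseteq D\oplus B$ is a tidy packaging of the same argument; just note that the decomposition of $\Ker d^1$ along these summands itself requires Proposition~\ref{lc} (cross-cancellations in $d^1$ occur only between pairs of end-elements), so that step belongs to, rather than precedes, your ``main obstacle''.
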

In terms of the ribbon graph of the gentle algebra $A$, we have the following local configurations:
\vspace{-0.2cm}
\begin{figure}[H]
	{\tiny
		\centering
		\begin{tikzpicture}[auto, thick, xscale=1.1]
		\clip (-1,-1.7) rectangle (4,1.6);
		\node[cg] (0) at (0:0){\,\,\,};
		\node[cg] (1) at ($(0)+(0:2.5)$){$q$};
		\draw[-] (0) to [out=60, in=120, looseness=1.1] node   {} (1);
		\draw[-] (1) to [out=240, in=300, looseness=1.1]  node  {} (0);
		\node () at ($(0)-(0:.5)$){$\times$};
		\node () at ($(1)-(0:.5)$){$\times$};
		\draw[-,dashed] (0) to node  {} ($(0)+(120:1.55)$);
		\draw[-,dashed] (0) to node  {} ($(0)+(-120:1.55)$);
		\draw[->] (0)+(45:.4cm) arc (45:-45:.4cm);
		\node () at ($(0)+(0:0.6)$) {$a$};
		\draw[-] (1) to node {} ($(1)+(60:1.55)$);
		\draw[-] (1) to node {} ($(1)+(30:1.55)$);
		\draw[-] (1) to node {} ($(1)+(-30:1.55)$);
		\draw[-] (1) to node {} ($(1)+(-60:1.55)$);
		
		\node () at ($(1)+(95:1.1)$){$q_1$};
		\draw[->] (1)+(125:.8cm) arc (125:65:.8cm);
		\node () at ($(1)+(45:1.1)$){$q_2$};
		\draw[->] (1)+(55:.8cm) arc (55:35:.8cm);
		\draw[-,dashed] (1)+(25:.8cm) arc (25:-25:.8cm);
		\node () at ($(1)+(-42:1.25)$){$q_{n-1}$};
		\draw[->] (1)+(-35:.8cm) arc (-35:-55:.8cm);
		\node () at ($(1)+(-95:1.1)$){$q_{n}$};
		\draw[->] (1)+(-65:.8cm) arc (-65:-125:.8cm) ;
		\end{tikzpicture}
	}
	\caption{Where on the left hand side the marking $\times$ is between any two edges except inside the two-cycle. This corresponds to a shortcut as in Theorem~\ref{HH^1} (1) with $q=q_n...q_1$ and $q_i\in Q_1$ for $1\leq i \leq n$.}
	\label{fig:config 4}
\end{figure}
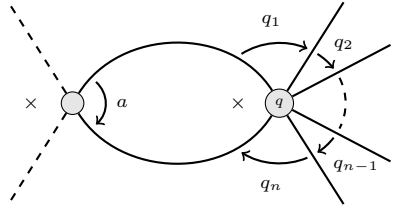
\vspace{-0.2cm}
\begin{figure}[H]
	{\tiny
		\centering
		\begin{tikzpicture}[auto, thick, scale=1.2]
		\clip (-3,-1.2) rectangle (3,1.6);
		\node[cg] (0) at (0:0){$q$};
		\node () at ($(0)+(-90:.5)$){$\times$};
		\Loop[dist=4.5cm,dir=EA,style={-}](0)
		\Loop[dist=4.5cm,style={-}](0);
		
		\draw[->] (0) +(38:1.2cm) arc (38:25:1.2cm);
		\node () at ($(0)+(31:1.4)$){$q^1_2$};
		\draw[-] (0) to node {}($(0)+(23:1.8)$);
		\draw[->] (0) +(21:1.2cm) arc (21:10:1.2cm);
		\node () at ($(0)+(16:1.4)$){$q^2_2$};
		\draw[-, line width=0.03mm, style={dashed}] (0) to node {}($(0)+(8:1.5)$);
		\draw[style={dashed},line width=0.15mm] (0) +(20:1.7cm) arc (20:-20:1.7cm);
		\draw[-] (0) to node {}($(0)+(-23:1.8)$);
		\node () at ($(0)+(-30:1.4)$){$q^n_2$};
		\draw[->] (0) +(-25:1.2cm) arc (-25:-38:1.2cm);
		
		\draw[->] (0) +(218:1.2cm) arc (218:205:1.2cm);
		\node () at ($(0)+(211:1.4)$){$q^1_1$};
		\draw[-] (0) to node {}($(0)+(203:1.8)$);
		\draw[->] (0) +(201:1.2cm) arc (201:190:1.2cm);
		\node () at ($(0)+(196:1.4)$){$q^2_1$};
		\draw[-, line width=0.05mm, style={dashed}] (0) to node {}($(0)+(188:1.5)$);
		\draw[style={dashed},line width=0.15mm] (0) +(200:1.7cm) arc (200:160:1.7cm);
		\draw[-] (0) to node {}($(0)+(157:1.8)$);
		\node () at ($(0)+(150:1.4)$){$q^m_1$};
		\draw[->] (0) +(155:1.2cm) arc (155:142:1.2cm);
		
		\node () at ($(0)+(90:1.4)$){$a$};
		\draw[->] (0) +(138:1.2cm) arc (138:42:1.2cm);	
		\end{tikzpicture}
	}
	\caption{This local configuration corresponds to a deviation $q=q_2^n...q_2^1aq_1^m...q_1^1$ via $a$  as in Theorem~\ref{HH^1} (2) and  $q_1^i, q_2^j$ arrows for $1\leq i\leq m$, $1\leq j\leq n$.}
	\label{fig:config 5}
\end{figure}
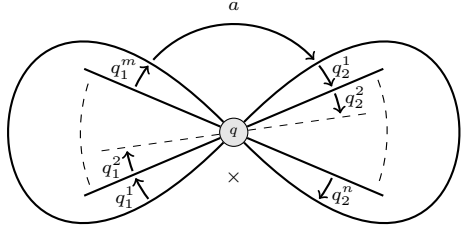
The interpretation of basis elements corresponding to arrows in fundamental cycles in case (3) in Theorem~\ref{HH^1} is given in Figure~\ref{fig:ExampleCuts}, and requires the definition of a fundamental cycle given in \ref{subsection::fundamental cycles}.

\vspace{-.25cm}
\begin{figure}[H]
	{\tiny
		\centering
		\begin{tikzpicture}[auto, thick, yscale=0.9]
		\clip (-1.5,-1) rectangle (2.2,1);
		\node[cg] (1) at (0,0){\,\,\,\,};
		\node () at ($(1)+(0:0.8)$){$a$};
		\draw[-] (1) to [out=45, in=-45, looseness=35] node {}(1);
		\draw[->] (1) +(33:.6cm) arc (33:-33:.6cm);
		\draw[-,dashed] (1) to node  {} ($(1)+(150:1.3)$);
		\draw[-,dashed] (1) to node  {} ($(1)+(210:1.3)$);
		\end{tikzpicture}
	}
    	\caption{This local configuration corresponds to Theorem~\ref{HH^1} (4) and only contributes a dimension to  $HH^1(A)$ in case ${\rm char } K=2$.}
	\label{fig:config 6}
\end{figure}
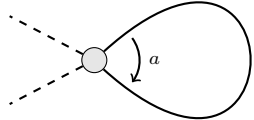
Before proving Theorem~\ref{HH^1},  we need the preliminary  results in \ref{fundamental cycles} and \ref{shortcuts deviations and loops} below. 

\subsubsection{ Fundamental cycles}
\label{fundamental cycles}\label{subsection::fundamental cycles}
Let $Q$ be a finite connected quiver and  $G_Q$ be the unoriented graph underlying $Q$ with $V$ vertices and $E$ edges. We now recall some standard notions from graph theory. 
A \emph{simple cycle} is a cycle in $G_Q$ such that each vertex appears only once. A \emph{spanning tree} is a subgraph $T$ of $G_Q$ which is a tree such that each vertex of $G_Q$ is in  $T$ and such that adding any edge 
$e \in G_Q \setminus T$  to $T$, then $T \cup {e}$ is not  a tree. Thus each edge $e \in G_Q \setminus T$ creates a cycle and these are the \emph{fundamental cycles} of $G_Q$. Note that the number of edges in the spanning tree $T$ is $V-1$ and so there are $E-V+1$ fundamental cycles in $G_Q$. By definition, $\chi(G_Q) = V-E$. So the number of fundamental cycles is $1 - \chi(G_Q)$.     

The \emph{heart} $H$ of $G_Q$ is the union of all simple cycles in $G_Q$. Let $D_i$, for $ 1 \leq i \leq r$, be the connected components in $H$ and $T_j$, for $1 \leq j \leq k$, the trees connecting the $D_i$. 

From now on we will identify $H$,  the $D_i$ and  the $T_j$ --which are subsets of $G_Q$--  with the respective subquivers of $Q$, that is, taking into account the orientation of the arrows.

	\begin{definition}{\rm
	(1) The {\em skeleton} $S$ of $Q$ is the minimal subquiver that connects $H$.\\
	(2)	The tree $f$ is a {\em branch} of $Q$ if it is a connected maximal subquiver and the arrows in $H$ or the skeleton $S$ do not appear in $f$. Denote by $F$ the union of the branches of $Q$.}
	\end{definition}	
Note that $H_1\cap S_1=\emptyset$ and $Q_1=H_1\sqcup S_1 \sqcup  F_1$, with $\sqcup$ the disjoint union. 

 
We  denote by   $\chi(\Gamma_A) = \vert (\Gamma_A)_0 \vert - \vert (\Gamma_A)_1 \vert$ the Euler characteristic of the graph underlying the ribbon graph $\Gamma_A$.

\begin{proposition}
Let $A= KQ/I$ be a  gentle algebra with ribbon graph $\Gamma_A$. Then $\chi(Q) = \chi(\Gamma_A)$ and moreover, $ \vert  \overline{\cM} \vert = 2 \vert Q_0 \vert - \vert Q_1 \vert$.
\end{proposition}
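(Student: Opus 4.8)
The plan is to reduce both assertions to a single counting identity and then deduce the equality of Euler characteristics as an immediate consequence. First I would record the two facts already available from the construction of $\Gamma_A$: by definition the vertex set of $\Gamma_A$ is $\overline{\cM}$, so $|(\Gamma_A)_0| = |\overline{\cM}|$, and, as noted just before the statement, the edges of $\Gamma_A$ are in bijection with the vertices of $Q$, so $|(\Gamma_A)_1| = |Q_0|$. With these in hand, $\chi(\Gamma_A) = |\overline{\cM}| - |Q_0|$, and the identity $\chi(Q) = \chi(\Gamma_A)$ is equivalent to $|\overline{\cM}| = 2|Q_0| - |Q_1|$. Thus it suffices to prove the second assertion, the first then following by substitution.

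To establish $|\overline{\cM}| = 2|Q_0| - |Q_1|$ I would double count the vertex-slots of the elements of $\overline{\cM}$. For $m \in \overline{\cM}$ write $\ell(m)$ for its length as a path; a path with $\ell(m)$ arrows has exactly $\ell(m)+1$ vertex-slots $v_1,\ldots,v_{\ell(m)+1}$, and this remains true when $m$ is a cycle, a slot being counted with multiplicity. The note recalled in the excerpt states that every vertex of $Q$ occurs exactly twice among the elements of $\overline{\cM}$, i.e. the total number of vertex-slots is $2|Q_0|$. Counting these slots instead by summing over $m$ gives
\[ 2|Q_0| = \sum_{m \in \overline{\cM}} \big(\ell(m)+1\big) = |\overline{\cM}| + \sum_{m \in \overline{\cM}} \ell(m). \]
It therefore remains to identify $\sum_{m \in \overline{\cM}} \ell(m)$ with $|Q_1|$.

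The main point, and the step where the gentle hypothesis is genuinely used, is that the maximal paths forming $\cM$ partition $Q_1$, each arrow occurring in exactly one of them, while the trivial paths $e_v$, $v \in V_0$, contribute nothing to the sum of lengths. For existence, every arrow $a$ is a nonzero path and so extends to a maximal nonzero path lying in $\cM$. For uniqueness and the absence of repetitions I would invoke the gentleness condition (2): each arrow $a$ admits at most one arrow $c$ with $ca \notin I$ and at most one arrow $b$ with $ab \notin I$, so its nonzero forward and backward extensions are unique, whence the maximal nonzero path through a given arrow is uniquely determined. Consequently two maximal paths sharing an arrow coincide, and an arrow cannot recur within a single maximal path, since the deterministic extension would then produce a periodic, hence infinite, nonzero path, contradicting the nilpotency of the arrow ideal (finite dimensionality of $A$). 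This yields $\sum_{m \in \overline{\cM}} \ell(m) = |Q_1|$, and substituting into the displayed identity gives $|\overline{\cM}| = 2|Q_0| - |Q_1|$; the equality $\chi(Q) = \chi(\Gamma_A)$ then follows from the first paragraph. The only delicate verification is this partition statement, in particular the finite-dimensionality argument ruling out an arrow appearing twice in one maximal path; everything else is bookkeeping.
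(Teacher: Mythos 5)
Your proof is correct and follows essentially the same double counting as the paper's: the paper's identities $E_Q=\sum_{m\in\Gamma_0}(\val(m)-1)$ and $\sum_{m\in\Gamma_0}\val(m)=2E_\Gamma$ are exactly your statements that the maximal paths partition $Q_1$ and that each vertex of $Q$ occurs twice in $\overline{\cM}$, only with the two conclusions derived in the opposite order. The one addition is your explicit justification, via gentleness condition (2) and finite-dimensionality, of the partition claim $\sum_{m}\ell(m)=|Q_1|$, which the paper uses without comment.
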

{\it Proof.} 
Set $\Gamma = \Gamma_A$. By definition of $\Gamma$, its edges are in one to one correspondence with the vertices in $Q$. 
 We have
$\chi(Q) = V_Q - E_Q   =  E_\Gamma  - \sum_{m \in \Gamma_0} (\val (m) -1) = E_\Gamma - \sloppy \sum_{m \in \Gamma_0} \val (m) + V_\Gamma$ where $\val(m)$ denotes the number of half-edges incident with the vertex $m$.   Since \sloppy $\sum_{m \in \Gamma_0} \val (m) = 2E_\Gamma$, we have $\chi(Q) = \chi(\Gamma)$. 
Therefore the number of vertices in $\Gamma$ is $2 \vert Q_0 \vert - \vert Q_1 \vert$ and thus $\vert  \overline{\cM} \vert = 2 \vert Q_0 \vert - \vert Q_1 \vert$.
\hfill $\Box$

\begin{corollary}\label{cor::same number fundamental cycles}
Let $A= KQ/I$ be a  gentle algebra with ribbon graph $\Gamma_A$. Then there is a bijection between the fundamental cycles in $G_Q$ and the fundamental cycles in $\Gamma_A$.
\end{corollary}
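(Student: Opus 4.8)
The plan is to deduce the statement directly from the Euler characteristic identity $\chi(Q) = \chi(\Gamma_A)$ established in the preceding Proposition, together with the standard graph-theoretic fact recalled in \ref{subsection::fundamental cycles} that a finite connected graph $G$ with Euler characteristic $\chi(G) = V - E$ has exactly $1 - \chi(G)$ fundamental cycles. Since two finite sets of equal cardinality are always in bijection, it suffices to show that $G_Q$ and $\Gamma_A$ have the same number of fundamental cycles, and for this I only need both graphs to be connected and to share the same Euler characteristic.

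First I would record connectivity. The graph $G_Q$ is connected by our standing hypothesis on $Q$. For $\Gamma_A$ I would argue as follows: its edges are indexed by the vertices of $Q$, and if $a \colon v \to w$ is an arrow of $Q$ then $a$ lies on some maximal path $m \in \overline{\cM}$, so that both $v$ and $w$ belong to $m$. Consequently the edges $E_v$ and $E_w$ of $\Gamma_A$ are both incident with the vertex $m$, hence adjacent. Translating a walk between two vertices of $Q$ arrow by arrow therefore produces a sequence of pairwise adjacent edges of $\Gamma_A$, so any two edges of $\Gamma_A$ lie in the same connected component; since $Q$ has at least one arrow, $\Gamma_A$ has at least one edge and is thus connected.

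With connectivity in hand, the number of fundamental cycles of $G_Q$ equals $1 - \chi(Q)$ and that of $\Gamma_A$ equals $1 - \chi(\Gamma_A)$. The preceding Proposition gives $\chi(Q) = \chi(\Gamma_A)$, so these two numbers coincide and the desired bijection exists.

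I expect the only point requiring genuine care to be the connectivity of $\Gamma_A$; the remainder is an immediate cardinality argument once the counting formula and the Euler characteristic equality are invoked. If a canonical rather than an abstract bijection were wanted, one would instead fix a spanning tree of $G_Q$, transport its complementary edges to the corresponding edges of $\Gamma_A$, and verify that they again complete a spanning tree there, matching each fundamental cycle of $G_Q$ with the one it determines in $\Gamma_A$; but for the stated corollary the equality of counts already suffices.
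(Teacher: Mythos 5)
Your argument is correct and is essentially the paper's intended one: the corollary is stated there without proof as an immediate consequence of the preceding Proposition's identity $\chi(Q)=\chi(\Gamma_A)$ together with the standard count of $1-\chi$ fundamental cycles for a connected graph recalled in the subsection on fundamental cycles. Your explicit check that $\Gamma_A$ is connected (via maximal paths linking $E_{s(a)}$ and $E_{t(a)}$ for each arrow $a$) is a detail the paper leaves implicit, and it is the right way to justify applying the counting formula to $\Gamma_A$.
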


\begin{lemma}
Let $Q, D_i $ and $r$ be as above. Then $$1 - \chi(Q) = \sum_{i=1}^r (1 - \chi(D_i)).$$
\end{lemma}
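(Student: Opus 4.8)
The plan is to recognize both sides as first Betti numbers (cycle ranks) of graphs and to show that the cycle rank of $Q$ is concentrated entirely in the heart $H$. Recall that for any graph $G$ with $V_G$ vertices, $E_G$ edges and $c(G)$ connected components, the cycle rank is $E_G - V_G + c(G)$, and that for a \emph{connected} graph this equals $1 - \chi(G)$, the number of its fundamental cycles (as recalled in \ref{subsection::fundamental cycles}). Since $Q$ is connected, the left-hand side $1 - \chi(Q)$ is the cycle rank of $Q$. Since each $D_i$ is a connected component of $H$, it is connected, so $1 - \chi(D_i)$ is the cycle rank of $D_i$; because the $D_i$ are pairwise disjoint with $H = D_1 \sqcup \cdots \sqcup D_r$, the right-hand side equals the cycle rank of $H$. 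Thus the assertion is equivalent to the statement that $Q$ and its heart $H$ have the same cycle rank.

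The key point is that, by definition, $H$ is the union of all simple cycles of $G_Q$, so every cycle of $G_Q$ lies inside $H$; equivalently, every edge in $Q_1 \setminus H_1 = S_1 \sqcup F_1$ is a bridge and lies on no cycle. I would make this quantitative by forming the graph $G'$ obtained from $G_Q$ by contracting each component $D_i$ to a single vertex. Its vertex set consists of the $r$ contracted points together with the $V_Q - V_H$ vertices of $Q$ lying in no $D_i$, and its edge set is exactly $S_1 \sqcup F_1$, so $V_{G'} = r + V_Q - V_H$ and $E_{G'} = E_Q - E_H$, where $V_H = \sum_i V_{D_i}$ and $E_H = \sum_i E_{D_i}$. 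The claim is that $G'$ is a tree: it is connected because $Q$ is, and it is acyclic because any cycle in $G'$ would lift to a cycle of $G_Q$ traversing at least one edge of $S_1 \sqcup F_1$, contradicting that such edges are bridges.

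Granting that $G'$ is a tree, we have $E_{G'} = V_{G'} - 1$, that is $E_Q - E_H = r + V_Q - V_H - 1$, which rearranges to $E_Q - V_Q + 1 = E_H - V_H + r$. The left-hand side is $1 - \chi(Q)$, while the right-hand side is $\sum_{i=1}^r (E_{D_i} - V_{D_i} + 1) = \sum_{i=1}^r (1 - \chi(D_i))$ because $H$ is the disjoint union of the $D_i$; this is exactly the asserted identity. The only genuine content, and hence the main obstacle, is verifying that $G'$ is acyclic, i.e.\ that contracting the heart components destroys all cycles: this is precisely the fact that an edge lies on a simple cycle if and only if it is not a bridge, combined with the defining property of $H$ as the union of all simple cycles, while everything else is Euler-characteristic bookkeeping. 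An alternative and essentially equivalent route would be to build $Q$ from $\bigsqcup_{i=1}^r D_i$ by successively attaching the trees comprising $S$ and $F$ and to observe that attaching a tree along finitely many of its vertices leaves the quantity $E - V + c$ unchanged, so that this invariant of $Q$ agrees with that of $\bigsqcup_i D_i$.
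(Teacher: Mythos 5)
Your argument is correct, and it takes a genuinely different route from the paper. The paper decomposes $\chi(Q)$ additively over the heart components $D_i$ and the connecting trees $T_j$, writing $\chi(Q)=\sum_i\chi(D_i)+\sum_j\chi(T_j)-\sum_j r_j$ with $r_j$ the number of components adjacent to $T_j$, and then reduces the lemma to the combinatorial identity $r=1-k+\sum_j r_j$, which it proves by induction on $r$. You instead contract each $D_i$ to a point and show the quotient $G'$ is a tree, the key input being that an edge lies on a simple cycle if and only if it is not a bridge, so every edge of $S_1\sqcup F_1$ is a bridge and no cycle of $G'$ can lift to $G_Q$; the identity then falls out of $E_{G'}=V_{G'}-1$. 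Your version buys a shorter, induction-free proof that makes the conceptual point explicit (the cycle rank of $Q$ lives entirely in the heart), and it treats the skeleton and the branches uniformly, whereas the paper's bookkeeping only tracks the connecting trees. One small caveat on your closing remark: the general statement that attaching a tree along finitely many of its vertices preserves $E-V+c$ is false (attaching a tree to a single component at two vertices creates a cycle); it holds here only because distinct attachment points of such a tree must land in distinct components, for otherwise the tree would acquire edges lying on a cycle and hence in $H$. This does not affect your main argument, which is complete as written.
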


\begin{proof}
Let $k$ be the number of trees connecting the $D_i$ and for each $i$, let $r_i$ be the number of connected components $D_j$ adjacent to the tree $T_i$. 
We have that $\chi(Q) = \sum_{i=1}^r  \chi(D_i) + \sum_{i=1}^k  \chi(T_i) - \sum_{i=1}^k  r_i$ and $ \sum_{i=1}^r (1 - \chi(D_i)) =  r - \sum_{i=1}^r  \chi(D_i)$. Since
 the $T_i$ are trees, we have that $\chi(T_i) = 1$. Thus we need to show that $r = 1 - k + \sum_{i=i}^k  r_i$. We show this by induction on $r$. For $r =1$, it is
  clear. Suppose that $r = 1 - k_r + \sum_{i=1}^{k_r} r_i$. Suppose that we have $r+1$ connected components in the heart. Then there are two cases, either 
  $k_{r+1} = k_r +1$ or $k_{r+1} = k_r $.
In the first case, we have  $1 - k_{r+1}  + \sum_{i=1}^{k_{r+1}} r_i = 1 - k_{r} -1   + \sum_{i=1}^{k_{r}} r_i + r_{k_{r+1}}$. By the induction hypothesis, this is $r -1 +  
r_{k_{r+1}}$. Since the new tree $T_{k_{r+1}}$ is adjacent to one existing connected component of the heart and the new one, $r_{k_{r+1}} = 2$. In the other 
case, we can choose a numbering of the connecting trees $T_j$ such that the new connected component is adjacent to $T_{k_r}$. Then 
$1 - k_{r+1}  + \sum_{i=1}^{k_{r+1}} r_i = 1 - k_{r}   + \sum_{i=1}^{k_{r} - 1}  r_i  + r'_{k_r}$ where $r'_{k_r} = r_{k_r} +1$. Thus we have $1 - k_{r}   + \sum_{i=1}^{k_{r}  }  r_i  - r_{k_r} + r'_{k_r}$ and   the result follows by induction. 
\end{proof}

\begin{proposition}\label{a||a}
	Each fundamental cycle in the graph $G_Q$ contributes one dimension to $HH^1(A)$.  Moreover, for each fundamental cycle, choosing an arrow $a$ such that the remaining arrows form a spanning tree of $Q$, the class of $a \vert \vert a$ belongs to a basis of $HH^1(A)$. 
\end{proposition}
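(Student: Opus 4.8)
The plan is to compute everything inside the cochain complex of Proposition~\ref{prop::str}, so that $HH^1(A)=\Ker d^1/\Im d^0$, and to isolate the contribution of the diagonal elements $(a,a)$, $a\in Q_1$. First I would check that each such element is a cocycle. Since every relation in $R$ has length $2$, each arrow lies in $\cB$, so $(a,a)\in Q_1\vert\vert\cB$; and by \eqref{d1} we have $d^1(a,a)=\sum_{q\in R}(q,q^{(a,a)})$, where replacing one occurrence of $a$ in a relation $q$ by the arrow $a$ simply reproduces $q$, which is zero in $A$ and hence discarded from the defining sum of $q^{(a,a)}$. Thus $q^{(a,a)}=0$ for every $q$, so $d^1(a,a)=0$ and all diagonal elements lie in $\Ker d^1$. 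It then remains to count them modulo $\Im d^0$.

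Next I would grade both $K(Q_0\vert\vert\cB)$ and $K(Q_1\vert\vert\cB)$ by the length of the path in the second coordinate and observe, from \eqref{d0}, that $d^0$ raises this degree by exactly one. Writing $W=\Span\{(a,a):a\in Q_1\}$ for the diagonal subspace (the degree-one elements whose path is an arrow) and $U=\Span\{(e,e):e\in Q_0\}$ for the degree-zero part of the source, formula \eqref{d0} gives $d^0(e,e)=\sum_{s(a)=e}(a,a)-\sum_{t(a)=e}(a,a)\in W$, which is, up to sign, the incidence (coboundary) map of $G_Q$, with loops contributing $0$. The length grading then forces $W\cap\Im d^0=\Im(d^0\vert_U)$: if $d^0(\xi)\in W$, decomposing $\xi=\xi_0+\xi'$ into its degree-zero and higher parts shows that $d^0(\xi')$ lives in degree $\geq 2$ and must vanish, leaving $d^0(\xi)=d^0(\xi_0)\in\Im(d^0\vert_U)$. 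By the rank computation already performed in the proof of Lemma~\ref{basisZ(A)}, $\dim\Im(d^0\vert_U)=\vert Q_0\vert-1$, so the diagonal classes span a subspace of $HH^1(A)$ of dimension $\vert Q_1\vert-\vert Q_0\vert+1=1-\chi(Q)$, exactly the number of fundamental cycles of $G_Q$ recalled in~\ref{subsection::fundamental cycles}. This proves the first assertion.

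For the basis statement, I would fix a spanning tree $T$ of $G_Q$ and let $a_1,\dots,a_{1-\chi(Q)}$ be the complementary (cotree) arrows, one in each fundamental cycle, so that the remaining arrows are precisely the edges of $T$. Since there are exactly $1-\chi(Q)$ of them, it suffices to show the classes $[(a_i,a_i)]$ are linearly independent in $HH^1(A)$, equivalently that $\Span\{(a_i,a_i)\}$ meets $\Im(d^0\vert_U)$ only in $0$. For this I would invoke the fundamental cuts $\{D_t\}_{t\in T}$ of the spanning tree: these form a basis of the cut space $\Im(d^0\vert_U)$, and each $D_t$ meets $T$ in the single edge $t$. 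Hence every nonzero element of the cut space has a nonzero coordinate on some tree edge, whereas every element of $\Span\{(a_i,a_i)\}$ is supported on cotree edges only, so the intersection is trivial. Consequently the $[(a_i,a_i)]$ are linearly independent and, being $1-\chi(Q)$ in number, they belong to (indeed form the diagonal part of) a basis of $HH^1(A)$.

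The main obstacle is the second paragraph: one must rule out \emph{hidden} relations among the $(a,a)$ coming from coboundaries $d^0(e,p)$ with $p$ of positive length, which a priori could fall back into the diagonal subspace $W$. Grading the second coordinate by path length settles this cleanly and reduces the entire statement to the incidence and cut structure of the underlying graph $G_Q$; the remaining graph theory in the third paragraph (triviality of the intersection of the cotree space with the cut space, via fundamental cuts) is classical.
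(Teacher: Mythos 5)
Your proof is correct, but it takes a genuinely different route from the paper's. The paper proceeds through Lemmas~\ref{0} to \ref{D}: it decomposes $Q$ into branches, the skeleton $S$ and the heart components $D_i$, shows by an iterative deletion argument that the class of $(a,a)$ vanishes for every arrow $a$ in a branch or in $S$, and then runs a rank computation on the incidence system $d^0(e_i,e_i)=0$ separately on each heart component (treating loops as a special case via Lemma~\ref{0}(3)). You instead grade the second coordinate by path length, observe that $d^0$ is strictly degree-raising, and deduce in one stroke that $W\cap\Im d^0=\Im(d^0\vert_U)$ with $W$ the diagonal span and $U=\Span\{(e,e)\}$; the whole statement then reduces to the classical linear algebra of the incidence matrix of $G_Q$ (rank $\vert Q_0\vert-1$ for connected $Q$, plus the fact that no nonzero element of the cut space is supported only on cotree edges). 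Your approach is shorter and more uniform -- it treats branches, skeleton, heart and loops simultaneously, and it makes explicit the point (only implicit in the paper, where Lemma~\ref{D} silently restricts attention to the system $d^0(e_i,e_i)=0$) that coboundaries $d^0(e,p)$ with $p$ of positive length cannot produce relations among the $(a,a)$. What the paper's longer route buys is the explicit list of which diagonal classes vanish and the explicit coboundary relations expressing tree arrows in terms of cotree arrows, which are reused later in the computation of the Lie brackets; your fundamental-cut argument yields the same independence statement but does not record those explicit reductions.
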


Before proving the proposition we show how to identify a set of arrows in the fundamental cycles of the ribbon graph of $A$ such that the corresponding elements are linearly independent in $HH^1(A)$.  By Corollary~\ref{cor::same number fundamental cycles}, to each fundamental cycle in $G_Q$ corresponds a fundamental cycle in $\Gamma_A$. Furthermore, by \cite{S} the arrows in $G_Q$ are in bijection with the (non-marked) angles in $\Gamma_A$ --an angle, is given by two consecutive (in the cyclic ordering) half-edges incident to the same vertex.  Choosing an  arrow $a$ in each fundamental cycle in $G_Q$ corresponds to cutting $\Gamma_A$ at the (non-marked) angle corresponding to $a$. Conversely any cut at a non-marked angle corresponds to an arrow in $G_Q$ and a minimal set of cuts in $\Gamma_A$ such that the cut graph  is a tree gives rise to a subset of basis elements of $HH^1(A)$ of the form $(a,a)$ corresponding to a choice of one arrow in each fundamental cycle in $G_Q$. 
\vspace{-0.2cm}
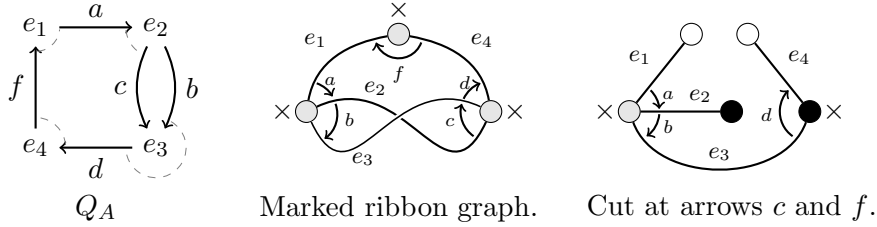
\begin{figure}[H]
	\centering
	\tikzstyle{help lines}+=[dashed]
	\begin{tikzpicture}[auto, thick, scale=0.8]+=[dashed]
	\node (1) at (0:0){$e_1$};
	\node (2) at ($(0)+(0:2)$) {$e_2$};
	\node (3) at ($(2)+(-90:2)$) {$e_3$};
	\node (4) at ($(0)+(-90:2)$) {$e_4$};
	\node ()  at ($(0)+(0:1)+(-90:3)$) {$Q_A$};
	
	\draw[->] (1) to node {$a$}(2);
	\draw[->] (2) to [out=-65, in=65] node {$b$}(3);
	\draw[style=help lines] (3) +(60:.5cm) arc (60:-175:.5cm);
	\draw[->] (2) to [out=-115, in=115] node [left]{$c$}(3);
	\draw[style=help lines] (2) +(185:.5cm) arc (185:245:.5cm);
	\draw[->] (3) to node {$d$}(4);
	\draw[style=help lines] (4) +(5:.5cm) arc (5:85:.5cm);
	\draw[->] (4) to node {$f$}(1);
	\draw[style=help lines] (1) +(-85:.4cm) arc (-85:-5:.4cm);
	\node[cg] (1b) at ($(2)+(-90:1.4)+(0:2.5)$){};
	\node[cg] (2b) at ($(1b)+(40:2)$){};
	\node[cg] (3b) at ($(2b)+(-40:2)$){};
	\node ()  at ($(2b)+(-90:2.9)$) {Marked ribbon graph.};
	\node () at ($(1b)-(0:.4)$){$\times$};
	\node () at ($(2b)+(90:.4)$){$\times$};
	\node () at ($(3b)+(0:.4)$){$\times$};
	{\footnotesize
		\draw[-] (1b) to [out=80, in=190] node {$e_1$}(2b);
		\draw[-] (2b) to [out=-10, in=100] node {$e_4$}(3b);
		\draw[-] (1b) to [out=25, in=245,looseness=1.5] node [above, pos=0.25] {$e_2$}(3b);
		\draw[draw=white,double=black] (1b) to [out=-65, in=155,looseness=1.5] node [below, pos=0.4] {$e_3$}(3b);
	}
	{\tiny
		\draw[->] (1b) +(70:.5cm) arc (70:30:.5cm) node [above, pos=0.8] {$a$};
		\draw[->] (1b) +(15:.5cm) arc (20:-55:.5cm) node [right, pos=0.4] {$b$};
		\draw[->] (2b) +(-20:.4cm) arc (-20:-160:.4cm) node [below, pos=0.5] {$f$};
		\draw[->] (3b) +(155:.5cm) arc (155:110:.5cm) node [above, pos=0.1] {$d$};
		\draw[->] (3b) +(235:.5cm) arc (235:165:.5cm) node [left, pos=0.5] {$c$};
	}
	\node[cg] (1c) at ($(3b)+(0:2.3)$){};
	\node[cw] (2c) at ($(1c)+(40:2)-(0:0.5)$){};
	\node[cg,black] (3c) at ($(1c)+(0:3)$){};
	\node[cw] (2cc) at ($(3c)+(140:2)+(0:0.5)$){};
	\node[cw,black] (3cc) at ($(1c)+(0:1.7)$){};
	\node ()  at ($(3cc)+(-90:1.6)$) {Cut at arrows $c$ and $f$.};
	\node () at ($(1c)-(0:.4)$){$\times$};
	\node () at ($(3c)+(0:.4)$){$\times$};
	
	{\footnotesize
		\draw[-] (1c) to  node {$e_1$}(2c);
		\draw[-] (2cc) to  node {$e_4$}(3c);
		\draw[-] (1c) to  node [above, pos=0.75] {$e_2$}(3cc);
		\draw[-] (1c) to [out=-70, in=250,looseness=1] node [above] {$e_3$}(3c);
	}
	{\tiny
		\draw[->] (1c) +(45:.5cm) arc (45:5:.5cm) node [right, pos=0.3] {$a$};
		\draw[->] (1c) +(-5:.5cm) arc (-5:-55:.5cm) node [right, pos=0.5] {$b$};
		\draw[->] (3c) +(235:.5cm) arc (235:135:.5cm) node [left, pos=0.5] {$d$};
	}
	\end{tikzpicture}
	\caption{Example of a minimal set of cuts given by the arrows $\{c,f\}$ corresponding to two basis elements $(c,c)$ and $(f,f)$ in $HH^1(A)$.}
	\label{fig:ExampleCuts}
\end{figure}

We now prove a series of lemmas which we will use in the proof of Proposition~\ref{a||a}.

	\begin{lemma} \label{0}
		Let $A = KQ/I$ be a gentle algebra and let $a$ be an arrow in $Q$,
		\begin{enumerate} 
			\item $(a,a)$ is a $1$-cocycle for each $a\in Q_1$.
			\item if $\val(s(a))=1$ or $\val(t(a))=1$, then  $(a,a)$ is a $1$-coboundary.
			\item if $a$ is a loop, then  $(a,a)$ is not a $1$-coboundary.
		\end{enumerate}
	\end{lemma}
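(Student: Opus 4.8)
The plan is to reduce all three statements to direct computations in the Strametz complex of Proposition~\ref{prop::str}, noting first that $(a,a)$ is manifestly an element of $K(Q_1 \vert\vert \cB)$, since a single arrow is parallel to itself and lies in $\cB$. For (1) I would evaluate $d^1(a,a)$ using \eqref{d1}: by definition $d^1(a,a)=\sum_{q\in R}(q,q^{(a,a)})$, and replacing a single occurrence of $a$ in a relation $q$ by the path $a$ simply returns $q$. Since $q\in R$ is a length-two relation, it contains itself as a forbidden subpath, so $q\notin\cB$; thus every such replacement fails to land in $\cB$, whence $q^{(a,a)}=0$ for all $q\in R$ and $d^1(a,a)=0$.

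For (2) the idea is to exhibit $(a,a)$ explicitly as $\pm d^0(e,e)$ for a suitable vertex $e$. I would observe first that the hypothesis $\val(s(a))=1$ or $\val(t(a))=1$ forces $a$ not to be a loop, since a loop contributes two to the valency of its vertex. If $\val(t(a))=1$, set $e=t(a)$; then $a$ is the unique arrow incident with $e$ and it points into $e$, so in \eqref{d0} the first sum reduces to $(a,a)$ and the second sum is empty, giving $d^0(e,e)=(a,a)$. Symmetrically, if $\val(s(a))=1$, set $e=s(a)$ and obtain $d^0(e,e)=-(a,a)$. In either case $(a,a)\in\Im d^0$, as $\Im d^0$ is a subspace.

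The hard part is (3), where I must prove a non-vanishing rather than produce a witness. The plan is to track the coefficient of the basis vector $(a,a)$ in an arbitrary coboundary. Writing $v=s(a)=t(a)$, I would determine for which $(e,p)\in Q_0\vert\vert\cB$ the term $(a,a)$ can occur in $d^0(e,p)$: a summand $(b,bp)$ or $(b,pb)$ equals $(a,a)$ only if $b=a$ and $bp=a$ (resp.\ $pb=a$), which forces $p$ to be the trivial path at $v$ and hence $(e,p)=(v,v)$. This is the only delicate point, and it is immediate once one notes that $bp$ and $pb$ have length at least two whenever $p$ is nontrivial, while $a$ has length one. Evaluating \eqref{d0} at $(v,v)$, the loop $a$ is counted once as an arrow into $v$, contributing $+(a,a)$, and once as an arrow out of $v$, contributing $-(a,a)$, so these cancel and the coefficient of $(a,a)$ in $d^0(v,v)$ is zero. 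Consequently the coefficient of $(a,a)$ in $d^0(z)$ vanishes for every $z\in K(Q_0\vert\vert\cB)$, whereas it equals $1$ in $(a,a)$ itself; hence $(a,a)\notin\Im d^0$, which is exactly the assertion of (3).
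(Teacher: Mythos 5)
Your proposal is correct and follows essentially the same route as the paper: part (1) by observing that $q^{(a,a)}=q\notin\cB$ for every $q\in R$, part (2) by exhibiting $(a,a)=\pm\,d^0(e,e)$ at the valency-one endpoint, and part (3) by checking that the two occurrences of the loop $a$ in $d^0(s(a),s(a))$ cancel. Your treatment of (3) is in fact slightly more complete than the paper's, since you explicitly verify that no other basis element $(e,p)$ of $K(Q_0\vert\vert\cB)$ can contribute the term $(a,a)$, whereas the paper only tabulates $d^0(e,e)$ and $d^0(e,a)$ by valency; this is a presentational refinement rather than a different argument.
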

	\begin{proof}
			(1) This follows immediately from \eqref{d0} and \eqref{d1} since $d^1(a,a) = \sum_{q\in R} (q, q^{(a, a)}) = \sum_{q\in R, a \in q} (q, q) = 0$ where the second sum runs  over those $q \in R$ such that $a$ is an arrow in $q$.  
			
			(2) Let $e$ be the vertex of $a$ with valency $1$, then $d^0(e,e)=\pm (a,a)$.
			
			(3) If $a$ is a loop, let $e=s(a)$ and $b,c \in Q_1$, we have by \eqref{d0}: 
			\begin{equation*}
			d^0(e,e)= \left\lbrace
			\begin{array}{ll}
			0& \text{if } \val (e)=2,\\
			\pm(b,b)& \text{if } \val (e)=3 \text{ and $b$ incides in } e,\\
			(b,b)-(c,c)& \text{if } \val (e)=4 \text{ and } s(b)=e=t(c),
			\end{array}\right.
			\end{equation*}		
			and
			\begin{equation*}
			d^0(e,a)=\left\lbrace
			\begin{array}{ll}
			0& \text{if } \val (e)=2 ,\\
			(b,ba)& \text{if } \val (e)=3 \text{ and } s(b)=e,\\
			-(b,ab)& \text{if } \val (e)=3 \text{ and } t(b)=e,\\
			(b,ba)-(c,ac)& \text{if } \val (e)=4 \text{ and } s(b)=e=t(c).
			\end{array}\right.
			\end{equation*}
			Hence $(a,a)$ does not belong to $\Im d^0$.
	\end{proof}
	
	\begin{lemma}\label{branch}
		Let $A = KQ/I$ be a gentle algebra and let $p_0$ be a branch of $Q$. If $a$ is an arrow in $p_0$, then the class of $(a,a)$ in $HH^1(A)$ is zero.
	\end{lemma}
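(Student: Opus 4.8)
The plan is to prove that $(a,a)$ is a coboundary, i.e.\ $(a,a)\in\Im d^0$, from which the vanishing of its class in $HH^1(A)$ is immediate. The computational engine is the restriction of \eqref{d0} to trivial paths: for any vertex $w\in Q_0$,
\[ d^0(w,w)=\sum_{c\in Q_1,\ s(c)=w}(c,c)\ -\sum_{c\in Q_1,\ t(c)=w}(c,c),\]
because $\chi_\cB(c)=1$ for every arrow $c$. If $a$ is incident to a vertex of valency $1$, then $(a,a)$ is already a coboundary by Lemma~\ref{0}(2), so the real content is to handle the ``interior'' arrows of the branch.

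Next I would record the structure of a branch $p_0$. Since $p_0$ contains no arrow of the heart $H$ nor of the skeleton $S$, and since $Q$ is connected, $p_0$ meets $H\cup S$ in exactly one vertex $v_0$: a second common vertex would, together with a path inside $p_0$, produce a cycle whose arrows would then lie in $H$, contradicting the definition of a branch. Consequently every vertex $w\neq v_0$ of $p_0$ is an \emph{interior} vertex, meaning that all arrows of $Q$ incident to $w$ already belong to $p_0$; in particular $p_0$ has neither loops nor multiple arrows, since these would create cycles lying in $H$.

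Now fix an arrow $a$ of $p_0$ and view $p_0$ as a tree rooted at $v_0$. Deleting the edge $a$ splits $p_0$ into two components; let $B_a$ denote the vertex set of the component not containing $v_0$. I would then compute $\sum_{w\in B_a} d^0(w,w)$ using the displayed formula above. Every arrow $c$ with both endpoints in $B_a$ contributes $+(c,c)$ at $w=s(c)$ and $-(c,c)$ at $w=t(c)$ and hence cancels. The crucial point is that $a$ is the unique arrow of $Q$ with exactly one endpoint in $B_a$: no vertex of $B_a$ equals $v_0$, so by the previous paragraph each such vertex has all its incident arrows inside $p_0$, and inside the tree $p_0$ only $a$ joins $B_a$ to its complement. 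Therefore the telescoping sum equals $\pm(a,a)$, with sign determined by the orientation of $a$ relative to $B_a$, so that
\[ (a,a)=\pm\, d^0\Big(\sum_{w\in B_a}(w,w)\Big)\in\Im d^0.\]

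The only delicate step is the structural claim of the second paragraph --- that a branch attaches to $H\cup S$ along a single vertex and that all its other vertices are interior --- since this is exactly what forces every internal cancellation in the telescoping sum and isolates the single surviving term $(a,a)$. Everything else is a direct application of \eqref{d0} on trivial paths.
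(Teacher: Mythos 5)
Your proof is correct, and it takes a genuinely different route from the paper's. The paper argues by iterated peeling: it first observes (via Lemma~\ref{0}(2)) that $(a,a)$ is a coboundary for every arrow $a$ touching a valency-one vertex, then removes those arrows, expresses $(a,a)$ for the next layer of arrows as $d^0(e,e)$ plus classes already known to vanish, and repeats until the branch is exhausted. You instead pick the attaching vertex $v_0$ of the branch, cut the tree at $a$, and sum $d^0(w,w)$ over the vertices of the component $B_a$ away from $v_0$; the interior cancellation leaves $\pm(a,a)$, so you get the explicit preimage $\sum_{w\in B_a}(w,w)$ of $\pm(a,a)$ under $d^0$ in one step. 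Your structural claims check out: two attaching vertices of a branch would yield two edge-disjoint paths between them (one in the branch, one in the connected subquiver $H\cup S$), hence a simple cycle through an arrow of the branch, which would then lie in $H$; and maximality of branches forces every vertex of $p_0$ other than $v_0$ to have all its incident arrows inside $p_0$, which is exactly what makes $a$ the unique boundary arrow of $B_a$. The only caveat is the degenerate case $H=\emptyset$ (so $Q$ is itself a tree and the whole quiver is one branch): there is then no $v_0$ in your sense, but the same telescoping works verbatim with an arbitrary vertex chosen as root, so this is a one-line patch rather than a gap. On balance your argument is shorter and more informative than the paper's, since it produces an explicit $0$-cochain whose coboundary is $(a,a)$ rather than only showing the class vanishes by induction.
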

	\begin{proof}
		Let  $P_{0}:=\{a\in Q_1| a\text{ is an arrow in  }p_0,\, \val _Q(s(a))=1 \text{ or } \val _Q(t(a))=1\}$. 
	Note that $P_0 \neq \emptyset$. By Lemma  \ref{0} we know that $(a,a)$ is a $1$-coboundary for each arrow $a$ in $P_{0}$. Let  $p_1$ be the subtree of $p_0$ without the arrows in $P_0$. If $p_1$ is trivial then the result follows. Now suppose that $p_1$ is not trivial and set \[P_{1}:=\{a\in Q_1| a\text{ is an arrow of } p_1,\val _{Q\setminus P_0}(s(a))=1 \text{ or } \val _{Q\setminus P_0}(t(a))=1\}.\] 
	Hence,  $P_1 \neq \emptyset$ and every $a\in P_{1}$ has at least one vertex which connects  with at least one arrow  $b\in P_{0}$.  If $e$ is this vertex then 
		\begin{equation*}
		d^0(e,e)=\left\lbrace
		\begin{array}{ll}
		\pm(a,a)\pm(b_1,b_1)& \text{ if } \val (e)=2 ,\\
		\pm(a,a)\pm(b_1,b_1)\mp(b_2,b_2)& \text{ if } \val (e)=3,\\
		\pm(a,a)\pm(b_1,b_1)\mp(b_2,b_2)\mp(b_3,b_3)& \text{ if } \val (e)=4,
		\end{array}\right.
		\end{equation*}
		where $b_1$, $b_2$ and $b_3$ are the arrows in $P_0$ which connect  to $e$. So the class $(a,a)$ can be written as the sum of zero classes in $HH^1(A)$. Since $Q$ is a finite quiver, repeating this process there is  some natural $t$ such that $P_t=\emptyset$.
	\end{proof}

	\begin{lemma}
		Let $A = KQ/I$ be a gentle algebra with skeleton $S$. Given an arrow $a$ in $S$, the class of $(a,a)$ in $HH^1(A)$ is zero.
	\end{lemma}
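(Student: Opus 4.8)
The plan is to prove that for a skeleton arrow $a$ the cocycle $(a,a)$ — which is a $1$-cocycle by Lemma~\ref{0}(1) — actually lies in $\Im d^0$, so that its class in $HH^1(A)$ vanishes. The structural observation that drives everything is that a skeleton arrow is a \emph{bridge} of $G_Q$. Indeed, since $S_1 \cap H_1 = \emptyset$ and $H$ is by definition the union of all simple cycles of $G_Q$, the arrow $a$ lies on no simple cycle, hence on no cycle at all; in particular $a$ is not a loop, and deleting the underlying edge of $a$ disconnects $G_Q$ into exactly two full subquivers $G_1$ and $G_2$, with $a$ the unique arrow joining them. Relabelling if necessary, I may assume $s(a) \in (G_1)_0$ and $t(a) \in (G_2)_0$.

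First I would record the explicit form of $d^0$ on trivial paths. For any vertex $e$, since single arrows always lie in $\cB$ (so $\chi_\cB(b)=1$), formula~\eqref{d0} specialises to
\[
d^0(e,e) = \sum_{b\,:\,s(b)=e}(b,b) - \sum_{b\,:\,t(b)=e}(b,b),
\]
in agreement with the computation in Lemma~\ref{0}(3): every arrow incident with $e$ contributes $\pm(b,b)$, with $+$ when $e$ is its source and $-$ when $e$ is its target.

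The main step is a telescoping sum over the vertices of $G_1$. I would compute $\sum_{e\in (G_1)_0} d^0(e,e)$. Any arrow $b$ with both endpoints in $G_1$ occurs exactly twice in this sum, once as $+(b,b)$ in $d^0(s(b),s(b))$ and once as $-(b,b)$ in $d^0(t(b),t(b))$, and therefore cancels; a loop inside $G_1$ already cancels within its single summand. Since $a$ is the only arrow crossing between $G_1$ and $G_2$, and only its source $s(a)$ lies in $G_1$, the term $(a,a)$ survives exactly once, with coefficient $+1$. Hence $\sum_{e\in (G_1)_0} d^0(e,e) = (a,a)$, which exhibits $(a,a)$ as an element of $\Im d^0$, so its class in $HH^1(A)$ is zero.

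The only point that genuinely requires care is the claim that each skeleton arrow is a bridge and that the resulting cancellation is total, i.e.\ that $a$ really is the unique arrow crossing between $G_1$ and $G_2$ and that no other single-arrow term $(b,b)$ can persist; both are immediate from the definition of the skeleton together with $S_1\cap H_1=\emptyset$, but they are what makes the telescoping collapse to a single term. Equivalently, this is the standard graph-theoretic fact that the indicator of a bridge lies in the cut space of $G_Q$, and the cut space is precisely the span of the elements $d^0(e,e)$.
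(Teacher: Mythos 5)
Your proof is correct, but it takes a genuinely different route from the paper's. You observe that every skeleton arrow is a bridge of $G_Q$ (since $S_1\cap H_1=\emptyset$ and an edge on no simple cycle is on no cycle), split $G_Q-a$ into the two components $G_1\ni s(a)$ and $G_2\ni t(a)$, and telescope $\sum_{e\in (G_1)_0} d^0(e,e)=(a,a)$; every arrow internal to $G_1$ cancels (loops cancel within their own summand) and $a$ is the unique crossing edge, so $(a,a)\in\Im d^0$. The paper instead first reduces to the branchless case via Lemma~\ref{branch}, picks a connected component $D$ of the heart met by exactly one skeleton arrow $x$, telescopes $\sum_{e\in D_0}d^0(e,e)=(x,x)$, and then runs an iterative deletion argument over the remaining components of $H$ and the intervening trees. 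Your single telescoping sum over one side of the bridge avoids both the reduction step and the induction, and in fact proves the stronger, cleaner statement that $(a,a)$ is a coboundary for \emph{every} bridge $a$ of $G_Q$ --- which simultaneously recovers Lemma~\ref{branch}, since branch arrows are also bridges. The one point worth making explicit is that you use the standing assumption that $Q$ is connected (so that deleting a bridge yields exactly two components), which the paper does impose at the start of Section~3; with that in place the argument is complete.
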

	\begin{proof}
		By  Lemma \ref{branch} we can assume $Q$ has no branches. Note that by the minimality property in the definition of the skeleton, there exists a connected component 
		$D$ in $H$ such that only one arrow of $S$ is incident to $D$. Let $x$ be this arrow. Denote by $D_0$ the vertices in $D$, since
		\[\sum_{e_i\in D_0}d^0(e_i,e_i)=\sum_{e_i\in D_0}\left( \sum_{b\in Q_1e_i}(b,b)-\sum_{b\in e_iQ_1}(b,b)\right)=(x,x),\]
		we have that $(x,x)$ is a $1$-coboundary. 
		
		Now, consider the subquiver $Q^{(1)}$ consisting of $Q\setminus D$ and the vertex of $(x,x)$ that belongs 
		to $D$. If $Q^{(1)}$ has a branch, then $x$ belongs to this branch and $\val _{Q^{(1)}}(s(x))=1$ or 
		$\val _{Q^{(1)}}(t(x))=1$. We can subsequently delete in this way all the classes $(b,b)$, 
		for each $b$ in the branch, up to the moment when we find an arrow with a vertex of valency greater than $2$; if this arrow is incident to another connected component of $H$, then we can start the process again, if not, we just delete this part of the branch and start the process again.
		If $Q^{(1)}$ does not have a branch, there exists a connected component $D^{(1)}$ in $H\setminus D$ such that only one arrow of $S$ is incident to this component, and we repeat the process, starting from $H\setminus D$. 
		
		Finally, since $Q$ is a finite quiver, $H$ is the union of $r$ connected components, and we conclude that 
		the class $(a,a)$ is zero in $HH^1(A)$ for each $a$ in $S$.
	\end{proof}

	\begin{lemma}\label{D}
		Let $A = KQ/I$ be a gentle algebra and suppose that $Q=D$. For each fundamental cycle in $Q$ there is a representative $(a,a)$ in the basis of  $HH^1(A)$.
	\end{lemma}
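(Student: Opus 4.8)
The plan is to reduce the statement to a piece of linear algebra governed by the incidence structure of the underlying graph $G_Q$. Write $\Delta\subseteq K(Q_1\vert\vert\cB)$ for the subspace spanned by the ``diagonal'' elements $(a,a)$, $a\in Q_1$. By Lemma~\ref{0}(1) every such element lies in $\Ker d^1$, so $\Delta\subseteq\Ker d^1$ and it makes sense to ask which elements of $\Delta$ are coboundaries. First I would pin down $\Im d^0\cap\Delta$ by a support argument: for $(e,p)$ with $p$ a nontrivial cycle at $e$, every term of $d^0(e,p)$ (see \eqref{d0}) has the form $(a,ap)$ or $(a,pa)$, whose second coordinate is a path of length $\geq 2$ and hence is linearly independent from every diagonal basis vector $(b,b)$. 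Since $K(Q_1\vert\vert\cB)$ has basis the parallel pairs, comparing the length-one components shows that the diagonal part of any coboundary is a combination of the $d^0(e,e)$ alone. As each $d^0(e,e)$ already lies in $\Delta$, this gives $\Im d^0\cap\Delta=\operatorname{span}\{d^0(e,e):e\in Q_0\}$.

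Next I would compute this span as an incidence space. Identifying $\Delta\cong K^{Q_1}$ via $(a,a)\mapsto a$, formula \eqref{d0} gives $d^0(e,e)=\sum_{a\in Q_1e}(a,a)-\sum_{a\in eQ_1}(a,a)$, i.e. the $e$-th row of the oriented incidence matrix of $G_Q$ (a loop is both incoming and outgoing, so its column vanishes). Since $Q=D$ is connected, this matrix has rank $\vert Q_0\vert-1$ over any field, the only relation being $\sum_{e\in Q_0}d^0(e,e)=0$, which is immediate as each non-loop arrow occurs once with each sign. Hence the image of $\Delta$ in $HH^1(A)$ has dimension $\vert Q_1\vert-(\vert Q_0\vert-1)=1-\chi(Q)$, which by the count recalled in \ref{subsection::fundamental cycles} is exactly the number of fundamental cycles of $G_Q$.

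Finally I would exhibit the basis. Choose a spanning tree $T$ of $G_Q$; its $1-\chi(Q)$ cotree arrows are in bijection with the fundamental cycles, each cotree arrow $a$ lying on its own fundamental cycle (the unique cycle in $T\cup\{a\}$). I claim the classes of these $(a,a)$ are linearly independent in $HH^1(A)$. Indeed, if a combination of them is a coboundary, then by Step~1 it equals $\sum_e\mu_e\,d^0(e,e)$; comparing the coefficient of each tree arrow $b$ (which does not occur on the left) forces $\mu_{t(b)}=\mu_{s(b)}$, and since $T$ is spanning the function $e\mapsto\mu_e$ is constant, so the right-hand side is $\mu\sum_e d^0(e,e)=0$ and all coefficients vanish. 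As there are exactly $1-\chi(Q)$ such independent classes inside an image of that same dimension, they form a basis of it, giving one basis element $(a,a)$ per fundamental cycle. The only delicate point is the length bookkeeping of Step~1 isolating the diagonal coboundaries; once this is in place the rest is the standard spanning-tree description of the cycle space and works uniformly in every characteristic (loops, having vanishing columns, automatically contribute their own class, in agreement with Lemma~\ref{0}(3)).
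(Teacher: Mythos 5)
Your proof is correct and follows essentially the same route as the paper's: a spanning tree of $G_Q$, the identification of the relations among the classes $(a,a)$ with the row space of the oriented incidence matrix coming from the system $d^0(e,e)=0$, the rank $\vert Q_0\vert-1$ computation, and the conclusion that the cotree arrows yield independent classes, one per fundamental cycle. The only differences are cosmetic improvements: your Step~1 (the length/support argument showing that $\Im d^0\cap\Delta$ is spanned by the $d^0(e,e)$ alone) makes explicit a point the paper leaves implicit, and you treat loops uniformly where the paper defers them to Lemma~\ref{0}.
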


	\begin{proof}
		Let us assume without loss of generality that $D$ has no loops, let $T$ be a spanning tree of $D$ and let $U=D_1\setminus T_1$ be the set of arrows in $D$ that do not appear in $T$. We consider the matrix $M$ of the system $d^0(e_i,e_i)=0$ for each $e_i\in D_0$, where we place in the last columns the elements of $U$. There are as many rows as vertices and as many columns as arrows in $D$. But  $\rank M= |D_0|-1$ and the reduced matrix has the following shape
		\[ \left( \begin{array}{cccccccc}
		1 & 0 & 0 & ... & 0 & * &...& *\\
		0 & 1 & 0 & ... & 0 & * &...& *\\ 
		0 & 0 & 1 & ... & 0 & * &...& *\\ 
		&   &   &     &   &   &   & \\ 
		0 & 0 & 0 & ... & 1 & * &...& *\\	
		0 & 0 & 0 & ... & 0 & 0 &...& 0
		\end{array}\right).  \]
		Therefore, the arrows in $T$ are linear combinations of arrows of $U$ in $HH^1(A)$ and the class of $(a,a)$ in $HH^1(A)$ is not zero for each $a$ in $U$. Note that the pairs $(a,a)$ with $a\in U$ are linearly independent, because if not we would have a row with zero entries in the $U$ positions and with nonzero scalars in the other positions. 
		
		Now, if $D$ has loops, these loops are fundamental cycles and the result is obtained by Lemma \ref{0}.
	\end{proof}

The proof of Proposition~\ref{a||a} now easily follows from Lemmas~\ref{0} to \ref{D}.

\subsubsection{Shortcuts,  deviations and loops}\label{shortcuts deviations and loops}
In this subsection we show how to find the other elements  in the basis of $HH^1(A)$.

%
%
%
%
%

\begin{proposition}\label{lc} Let $A = KQ/I$ be a gentle algebra and let 
 $(a,p), (a',p') \in Q_1\vert \vert \cB$ be such that $q^{(a,p)}\neq 0$ for some $q\in R$. We have $q^{(a,p)}=q^{(a',p')}$ if and only if one of the following holds:
	\begin{itemize}
		\item $(a,p)=(a',p')$,
		\item $q=aa'$, $p=ar$ and $p'=ra'$, where $r$ is not a trivial cycle in $\cB$.
	\end{itemize}
\end{proposition}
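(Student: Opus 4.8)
The plan is to unravel the definition of $q^{(a,p)}$ and compare the two sums term by term. Recall that for $q \in R$ (a relation of length $2$, say $q = a_2 a_1$ with $a_2 a_1 \in I$) and $(a,p) \in Q_1 \vert\vert \cB$, the path $q^{(a,p)}$ is the sum of all nonzero paths obtained by replacing exactly one occurrence of $a$ in $q$ by $p$. The hypothesis $q^{(a,p)} \neq 0$ means that $a$ is one of the two arrows $a_1, a_2$ making up $q$, and that the corresponding substitution lands in $\cB$. Since the ``if'' direction is immediate (in the second case one checks directly that replacing $a$ in $q = aa'$ by $p = ar$ gives $ara'$, and replacing $a'$ in $q = aa'$ by $p' = ra'$ also gives $ara'$, while no other relation can produce this term), the work is entirely in the ``only if'' direction.

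\textbf{Analysing a single relation.} First I would fix a relation $q = a_2 a_1 \in R$ contributing nontrivially to $q^{(a,p)}$, and record exactly which terms appear. Because $A$ is gentle and $q$ has length $2$, the arrow $a$ equals $a_1$ or $a_2$ (and, crucially, since the two arrows of a length-$2$ relation in a gentle quiver are distinct, $a$ cannot be both). If $a = a_1$, the replacement yields the single path $a_2 p$; if $a = a_2$, it yields $p a_1$. Moreover the gentle conditions (2) and (3) guarantee that these replacements, when nonzero, consist of a \emph{single} nonzero path rather than several, so $q^{(a,p)}$ is a single basis element of $\cB$ weighting the pair $(q, \cdot)$. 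The same analysis applies to $(a',p')$, giving either $a_2' p'$ or $p' a_1'$ where $q = a_2' a_1'$ as well (the relation $q$ is fixed, so $a_2' a_1' = a_2 a_1$, forcing $a_1' = a_1$, $a_2' = a_2$).

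\textbf{Matching terms.} Now the equation $q^{(a,p)} = q^{(a',p')}$ for the fixed $q$ forces the two resulting paths in $\cB$ to coincide. There are four cases according to whether each of $a,a'$ equals $a_1$ or $a_2$. If both equal $a_1$ (resp.\ both $a_2$), then $a = a' = a_1$ and $a_2 p = a_2 p'$ (resp.\ $p a_1 = p' a_1$), and cancellation in the path algebra gives $p = p'$, so $(a,p) = (a',p')$. The interesting mixed case is $a = a_1$, $a' = a_2$ (or symmetrically): then $a_2 p = p' a_1$ as paths in $\cB$. Writing $q = a_2 a_1 = a' a$, I would read off that this common path starts with $a_2 = a'$ and ends with $a_1 = a$, so it factors as $a' r a$ for some (possibly trivial) path $r$; comparing with $a_2 p = a' p$ gives $p = r a = ra$ (here $a = a_1$), and comparing with $p' a_1 = p' a$ gives $p' = a' r = a'r$. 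Relabelling via $q = aa'$ (matching the statement's naming, where their $a = a_1$ is the first arrow and $a' = a_2$ the second), this is exactly the second alternative with $p = ar$ and $p' = ra'$. The condition that $r$ is not a trivial cycle in $\cB$ is what ensures $p, p' \in \cB$ and that this is a genuine deviation/shortcut configuration rather than the degenerate $p = p'$ case already covered.

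\textbf{Main obstacle.} The one point requiring care is that the statement allows $q^{(a,p)} = q^{(a',p')}$ as an equality of \emph{sums over all} $q \in R$, not for a single fixed $q$; so I must argue that the term indexed by each relation $q$ matches independently. Since the pairs $(q, q^{(a,p)})$ live in $K(R \vert\vert \cB)$ with basis indexed by $(q, \text{path})$, two such sums agree iff they agree coefficient-wise for each $q$. The subtlety is that a single pair $(a,p)$ might contribute to several relations $q$ simultaneously, and I must verify that the constraints these impose are compatible only in the two listed configurations. Here the gentleness hypotheses do the heavy lifting: condition (2) bounds how $a$ can begin or end composable paths with at most one arrow on each side avoiding $I$, which prevents $(a,p)$ from nontrivially hitting two different relations in conflicting ways. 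I expect verifying this compatibility — that the mixed-case solution from one relation is not destroyed by the requirements of another — to be the main technical hurdle, handled by a careful local case analysis at the vertices $s(a)$ and $t(a)$ using the gentle axioms.
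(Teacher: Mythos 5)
Your core argument is the same as the paper's: fix the relation $q$, split into the cases $a=a'$ and $a\neq a'$, and cancel in the path algebra to conclude either $p=p'$ or the configuration $q=aa'$, $p=ar$, $p'=ra'$. That part is sound. However, the step you flag as crucial — ``since the two arrows of a length-$2$ relation in a gentle quiver are distinct, $a$ cannot be both'' — is false: a gentle algebra may have a loop $a$ with $a^2\in I$ (the paper's own example after Theorem~\ref{basisAlt} has $b^2\in I$, and the single-loop algebra $KQ/\langle x^2\rangle$ appears in Section~\ref{tpc}). For such a relation $q=a^2$ the element $q^{(a,p)}$ can be a sum of \emph{two} paths (indeed $(a^2,2a)$ is a summand of $d^1(a,s(a))$, as the paper itself uses), so your reduction to ``a single nonzero path,'' on which the term-matching rests, breaks down; one must check separately that $q^{(a,p)}=q^{(a,p')}\neq 0$ with $q=a^2$ still forces $p=p'$, which ultimately uses finite-dimensionality to rule out $p=ar$, $p'=ra$ with $ara\in\cB$. (The paper's proof silently skips this case as well, so the omission is shared, but your explicit assertion should be corrected.)

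Your ``main obstacle'' paragraph rests on a misreading: the statement fixes a single $q\in R$ with $q^{(a,p)}\neq 0$ and asks when $q^{(a,p)}=q^{(a',p')}$ for \emph{that} $q$; there is no sum over all relations, hence no cross-relation compatibility to verify, and the paper's proof accordingly never performs one. As written, though, you defer what you yourself call the main technical hurdle to an unperformed case analysis, so by your own reading the proof would be incomplete — it is rescued only because the hurdle does not exist. Finally, the reason $r$ cannot be a trivial cycle is not that $p$ or $p'$ would fall outside $\cB$ (arrows lie in $\cB$), but that $r=e_v$ would give $q^{(a,p)}=q^{(a,a)}=\chi_\cB(q)\,q=0$ because $q\in I$, contradicting the hypothesis $q^{(a,p)}\neq 0$.
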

\begin{proof}
	Suppose that  $0\neq q^{(a,p)}=q^{(a',p')}$, then the arrows $a$ and $a'$ appear in $q$. 
	If $a=a'$ and $q=aq'$, we have $q^{(a,p)}=pq'=p'q'$, hence $p=p'$; similarly we get the same if $q=q'a$. Therefore $(a,p)=(a',p')$.
	
	If $a\neq a'$, let us assume without loss of generality that $q=aa'$, then $q^{(a,p)}=pa'=ap'=q^{(a',p')}$, thus $a$ is the first arrow of $p$ and $a'$ is the last arrow of $p'$, i.e. $p=ar$ and $p'=r'a'$ for some cycles $r,r' \in \cB$. So $ara'=ar'a'$ and	$r=r'$.
	The converse immediately follows from the definitions.
\end{proof}

Recall the following from \cite[ Lemma 2.4.2.1]{Str}.

\begin{proposition}
	Let  $A$ be a monomial algebra and $d^1$ given by \eqref{d1}. The set $K(Q_1\vert \vert Q_0)\cap \Ker d^1$ is zero if and only if there exists a path $q\in R$ such that $q^{(a,s(a))}\neq0$ for any loop $a$. 
\end{proposition}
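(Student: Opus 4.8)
The plan is to begin by pinning down the domain. A pair $(a,e)\in Q_1\vert\vert Q_0$ requires $s(a)=s(e)=e$ and $t(a)=t(e)=e$, so $K(Q_1\vert\vert Q_0)$ has as basis precisely the elements $(a,s(a))$ with $a$ a loop. Hence the assertion is exactly that the restriction of $d^1$ to the span of the loops is injective, and everything reduces to analysing the images $d^1(a,s(a))=\sum_{q\in R}(q,q^{(a,s(a))})$. The easy half is the contrapositive of the ``only if'' direction: if some loop $a_0$ satisfies $q^{(a_0,s(a_0))}=0$ for every $q\in R$, then $d^1(a_0,s(a_0))=0$, so the nonzero element $(a_0,s(a_0))$ lies in $K(Q_1\vert\vert Q_0)\cap\Ker d^1$, which is therefore not zero. (This is where the characteristic enters implicitly, since e.g. $q=aa$ yields $q^{(a,s(a))}=2\,a$.)

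For the ``if'' direction I would take a general element $x=\sum_{a\text{ loop}}\lambda_a\,(a,s(a))\in\Ker d^1$ and exploit that the pairs $(q,w)$, for $q\in R$ and $w\in\cB$ parallel to $q$, form a basis of $K(R\vert\vert\cB)$. Vanishing of $d^1(x)=\sum_{q\in R}\bigl(q,\sum_a\lambda_a\,q^{(a,s(a))}\bigr)$ then forces, block by block, the identity $\sum_a\lambda_a\,q^{(a,s(a))}=0$ in $A$ for each fixed $q\in R$. To conclude $\lambda_{a_0}=0$ for a prescribed loop $a_0$, I would use the hypothesis to choose $q_0\in R$ with $q_0^{(a_0,s(a_0))}\neq0$ and then show that the contribution of $a_0$ cannot be cancelled by the contributions of the other loops. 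The main obstacle is to supply the term-level \emph{disjointness lemma} underlying this: for a single $q\in R$ and distinct loops $a\neq a'$, no path occurs both in $q^{(a,s(a))}$ and in $q^{(a',s(a'))}$. Proposition~\ref{lc} records only the coarser equality-of-sums statement, and moreover is stated for gentle algebras, whereas here $A$ is merely monomial; its second alternative is in any case vacuous because the second coordinate $s(a)$ is a trivial path, never of the form $ar$ with $r$ nontrivial. So I must argue the sharper, support-level claim directly.

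I would prove this lemma by a direct index chase valid for any monomial relation. Writing $q=c_1\cdots c_n$ as a word in the arrows, suppose deleting the occurrence $c_i=a$ and deleting the occurrence $c_j=a'$ produced the same word; since $a\neq a'$ we have $i\neq j$, say $i<j$. Matching the two words position by position, the entries below $i$ agree automatically, while for every $k$ with $i\leq k<j$ the ``delete-$i$'' word carries $c_{k+1}$ and the ``delete-$j$'' word carries $c_k$, forcing $c_{k+1}=c_k$; hence $c_i=c_{i+1}=\cdots=c_j$ and in particular $a=c_i=c_j=a'$, a contradiction. (The constraint that the surviving words lie in $\cB$ only shrinks the supports, so disjointness holds a fortiori.) Granting the lemma, the support of $\lambda_{a_0}q_0^{(a_0,s(a_0))}$ is disjoint from that of $\sum_{a\neq a_0}\lambda_a\,q_0^{(a,s(a))}$, so the relation $\sum_a\lambda_a\,q_0^{(a,s(a))}=0$ already forces $\lambda_{a_0}q_0^{(a_0,s(a_0))}=0$; since $q_0^{(a_0,s(a_0))}\neq0$ this gives $\lambda_{a_0}=0$, and as $a_0$ was arbitrary we conclude $x=0$, establishing $K(Q_1\vert\vert Q_0)\cap\Ker d^1=0$.
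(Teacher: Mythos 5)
Your proof is correct. Note that the paper itself offers no argument here: the proposition is simply recalled from Strametz's thesis (cited as Lemma 2.4.2.1 of \cite{Str}), so there is no in-paper proof to compare against, and your write-up supplies a complete, self-contained justification. Your reduction is the right one --- $K(Q_1\vert\vert Q_0)$ is spanned by the pairs $(a,s(a))$ with $a$ a loop, the ``only if'' direction is immediate by contraposition, and the ``if'' direction reduces, block by block over $q\in R$, to the vanishing of $\sum_a\lambda_a\,q^{(a,s(a))}$. You correctly identify that the statement you actually need is a support-level disjointness claim that is strictly finer than Proposition~\ref{lc} (which compares the full sums $q^{(a,p)}$ and is stated for gentle algebras, with its second alternative inapplicable when the substituted path is trivial), and your word-combinatorial argument --- equality of the two deletions forces a constant run $c_i=\cdots=c_j$, hence $a=a'$ --- is valid for relations of arbitrary length, as required in the monomial (not just gentle) setting. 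Two small points you handle correctly and that are worth keeping explicit: the condition $q^{(a,s(a))}\neq 0$ is a statement in the vector space, so repeated occurrences of $a$ in $q$ contribute multiplicities (whence the characteristic-$2$ phenomenon for $q=a^2$ exploited in the subsequent corollaries); and the quantifier in the statement must be read as ``for each loop $a$ there exists $q\in R$ (depending on $a$)'', which is the reading your proof uses and the one consistent with the paper's use of the result.
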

\begin{corollary}
		Let $A=KQ/I$ be a gentle algebra and $d^1$ given by \eqref{d1}. If ${\rm char }  K\neq 2$ then $K(Q_1\vert \vert Q_0)\cap \Ker d^1$ is zero.
\end{corollary}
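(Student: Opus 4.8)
The plan is to deduce the statement directly from the preceding proposition, which characterizes the vanishing of $K(Q_1\vert\vert Q_0)\cap \Ker d^1$ by the existence, for each loop $a$, of a relation $q\in R$ with $q^{(a,s(a))}\neq 0$. Thus my only task is to verify this existence criterion under the hypothesis ${\rm char } K\neq 2$. I would begin by recording the elementary observation that a pair $(a,e)\in Q_1\times Q_0$ is parallel precisely when $a$ is a loop at $e=s(a)=t(a)$; hence $K(Q_1\vert\vert Q_0)$ is spanned exactly by the elements $(a,s(a))$ with $a$ a loop, and these are the only basis vectors I need to control. If $Q$ has no loops the space is zero and there is nothing to prove, so I assume a loop $a$ is given.

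The key step is to identify a suitable relation through $a$. Since $A$ is finite dimensional, the powers $a^n$ cannot all be nonzero in $A$, so some power lies in $I$; because $I$ is generated by the length-two paths in $R$ and the only length-two subpath occurring in any power of a loop is $aa$, I would conclude that $aa\in R$. Taking $q=aa$, the arrow $a$ appears exactly twice in $q$, and replacing either occurrence by the trivial path $s(a)$ returns the single arrow $a$, which lies in $\cB$ (a length-one path cannot contain a length-two relation). Consequently $q^{(a,s(a))}=a+a=2a$.

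Finally, the characteristic hypothesis enters through this factor of two: since ${\rm char } K\neq 2$ we have $2a\neq 0$, so $q^{(a,s(a))}\neq 0$. As this holds for every loop $a$, the criterion of the preceding proposition is satisfied and $K(Q_1\vert\vert Q_0)\cap \Ker d^1=0$.

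I expect the main obstacle to be not the final computation but the justification that $aa\in R$ for every loop: one must argue carefully that finite dimensionality together with the length-two generation of $I$ forces this relation, ruling out the possibility that a loop survives with $aa\notin I$ (which would produce an infinite-dimensional subalgebra $K[a]$). Once $aa\in R$ is secured, the decisive point is purely combinatorial---the two distinct occurrences of $a$ in $aa$ each contribute the term $a$, and it is exactly this multiplicity $2$ that distinguishes the characteristic-two case (where $(a,s(a))$ becomes a cocycle, see Theorem~\ref{HH^1} (4)) from all others.
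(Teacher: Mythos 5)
Your proposal is correct and follows exactly the paper's route: the paper's one-line proof observes that for any loop $a$ the term $(a^2,2a)$ is a summand of $d^1(a,s(a))$, which is nonzero in characteristic different from $2$, and then invokes the preceding proposition. You merely make explicit the (correct) justification that $a^2\in R$, which the paper leaves implicit.
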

\begin{proof}
	If $a$ is a loop then $(a^2,2a)$ is a summand of $d^1(a,s(a))$.
\end{proof}

\begin{corollary}\label{loop}
	Let $A=KQ/I$ be a gentle algebra with ${\rm char } K= 2$ and $a$ be a loop, then the class of $(a,s(a))$ is an element of the basis of $HH^1(A)$
\end{corollary}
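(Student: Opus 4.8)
The plan is to verify directly that $(a,s(a))$ is a $1$-cocycle whose class is nonzero and linearly independent from the basis elements already produced in cases (1)--(3) of Theorem~\ref{HH^1}; the decisive feature will be that its second coordinate $s(a)$ is a \emph{trivial} path, whereas all the previously constructed basis elements have a second coordinate of length at least one. First I would show $(a,s(a))\in\Ker d^1$. By \eqref{d1} we have $d^1(a,s(a))=\sum_{q\in R}(q,q^{(a,s(a))})$, and only those $q\in R$ through which the loop $a$ passes can contribute. I claim the sole relation through $a$ is $a^2$: since $A$ is finite dimensional and $I$ is generated by paths of length $2$, the powers of $a$ cannot all be nonzero (else $a,a^2,a^3,\dots$ would be linearly independent), which forces $a^2\in I$ and hence $a^2\in R$. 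Now $a^2$ already realizes the unique length-two relation that has $a$ as its last-applied and as its first-applied arrow, so the gentle conditions (which allow at most one relation formed by composing $a$ on each side) exclude $ba\in I$ and $ac\in I$ for every other arrow $b$ with $s(b)=t(a)$ and $c$ with $t(c)=s(a)$. Thus $a^2$ is the only relation containing $a$, and replacing each of its two occurrences of $a$ by $s(a)$ gives $q^{(a,s(a))}=a+a=2a$, which vanishes when $\operatorname{char}K=2$. Hence $d^1(a,s(a))=(a^2,2a)=0$, and this is precisely where the hypothesis $\operatorname{char}K=2$ enters, complementing the preceding corollary.

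Next I would argue that the class of $(a,s(a))$ is nonzero and does not lie in the span of the classes of cases (1)--(3). By \eqref{d0}, every term of $d^0(v,p)$ has the form $(b,bp)$ or $(b,pb)$ with $b$ an arrow, so its second coordinate has length at least $1$; consequently every element of $\Im d^0$, expanded in the basis $Q_1\vert\vert\cB$, is supported on pairs whose second coordinate is a path of length $\geq 1$. The same holds for each basis element of cases (1)--(3): a shortcut $(a,q)$, a deviation $(a,q)$, and a pair $(a',a')$ all have a second coordinate of length $\geq 1$. Since $s(a)$ is a trivial path, the basis vector $(a,s(a))$ cannot be written as a linear combination of elements of $\Im d^0$ together with the case (1)--(3) elements: comparing the coefficient of $(a,s(a))$ on both sides forces the contradiction $1=0$. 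Taking all coefficients in this combination to be zero shows in particular that $(a,s(a))\notin\Im d^0$, so its class is nonzero; the general case shows it is independent of the other basis classes. Distinct loops yield distinct such vectors, which are independent for the same reason, so these elements genuinely extend the basis, establishing case (4) of Theorem~\ref{HH^1}.

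I expect the main obstacle to be the cocycle verification, and within it the claim that $a^2$ is the \emph{only} element of $R$ through the loop $a$: this requires combining the finite dimensionality of $A$ (to force $a^2\in I$) with the gentle hypotheses (to rule out the competing length-two relations $ba$ and $ac$), after which the characteristic-$2$ cancellation $2a=0$ kills the one surviving term. Once this is settled, the non-coboundary and independence statements follow uniformly from the single observation that, among all the listed basis elements and all of $\Im d^0$, only $(a,s(a))$ is supported on a pair with a length-zero second coordinate.
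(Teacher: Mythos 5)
Your proposal is correct and follows essentially the same route as the paper: the paper's (very terse) proof likewise rests on the observation that $(a^2,2a)$ is the summand of $d^1(a,s(a))$ killed by ${\rm char}\,K=2$ (as noted in the corollary immediately preceding this one) and that $(a,s(a))$ cannot be a coboundary. You have simply supplied the details the paper leaves implicit — that finite dimensionality plus the gentle conditions force $a^2$ to be the \emph{unique} relation through the loop, and that every term of $d^0$ and every other listed basis element has second coordinate of length at least one — and both of these fillers are accurate.
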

\begin{proof}
	Note that $(a,s(a))$ is not a $1$-coboundary, and $d^1(a,s(a))=0$.
\end{proof}

	{\it Proof of Theorem~\ref{HH^1}.}
			(1) Let $a$ be a shortcut for $q$, then  $(a,q)$ is not a $1$-coboundary, this immediately follows from the  definition of $d^0$. Since $a$ is not in any relation in $I$, the element $(a,q)$ is a $1$-cocycle. Note that if $a$ appears in at least one relation, by Proposition \ref{lc} it is not possible for $(a,q)$ to be a $1$-cocycle.
			
			(2) Let $q=q_2aq_1$, then we have three cases:
			
(i) If $q_1,q_2 \notin Q_0$ then $(a,q)$ is not a $1$-coboundary. If $q$ is not a deviation for $a$ then there exists a path $r\in R$ such that $a$ is an arrow in $r$ and $(r,r^{(a,q)})\neq 0$. By Proposition \ref{lc} the element $(a,q)$ is not a $1$-cocycle. if $q$ is a deviation via $a$ then $(a,q)$ is a $1$-cocycle. 
				
(ii) If $q_1 \notin Q_0$  and $q_2\in Q_0$ then $q=aq_1$ and we have two cases. The first is when $t(a)$ is not a vertex in $q_1$ and the second when $t(a)$ is a vertex in $q_1$. In the first case there exists at most an arrow $b$  which is not the last arrow of $q_1$ and $t(b)=s(a)$. If there exists $b$ then $ab$ is in $R$ because $A$ is gentle.
\begin{figure}[H]
	\centering
	\tikzstyle{help lines}+=[dashed]
	\begin{tikzpicture}[auto, thick,yscale=0.9]+=[dashed]
	{\footnotesize
		\node (0) at (0:0) {$\bullet$};
		\node (1) at ($(0)+(0:2)$) {$\bullet$};
		\node (2) at ($(1)+(0:2)$) {$\bullet$};
		\draw[->,dashed] (1) to [out=130, in=50, looseness=15] node {$q_1$}(1);
		\draw[->] (0) to node {$b$} (1);
		\draw[->] (1) to node {$a$} (2);
		\draw[style=help lines] (1) +(200:.45cm) arc (200:350:.45cm);
		\draw[style=help lines] (1) +(55:.55cm) arc (55:130:.55cm);
	}
	\end{tikzpicture}
	\caption{Local configuration in $Q$ if there exists $b$.}
	\label{fig:case4-1}
\end{figure}
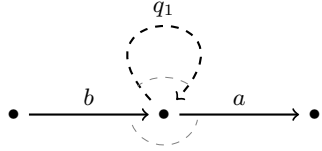
Therefore
		\begin{equation*}
			d^0(s(q_1),q_1)= \left\lbrace
			\begin{array}{ll}
				(a,aq_1) & \text{if } \val (s(a))=3,\\
				(a,aq_1)-(b,q_1 b)& \text{if } \val (s(a))=4,
			\end{array}\right.
		\end{equation*}
		and 
		\begin{equation*}
			d^1(a,aq_1)= \left\lbrace
			\begin{array}{ll}
					0& \text{if } \val (s(a))=3,\\
					(ab,aq_1b)& \text{if } \val (s(a))=4.
			\end{array}\right.
		\end{equation*}
	So, if $b$ exists, this  implies that $(a,aq_1)$ is not a $1$-cocycle. In addition $(a,aq_1)-(b,q_1b)$ is zero in $HH^1(A)$. On the other hand, if $b$ does not exist then $(a,aq_1)$ is zero in $HH^1(A)$.
	
	Similarly, if $t(a)$ is a vertex in $q_1$, then $q_1= r_2t(a)r_1$ with $r_1$ and $r_2$ paths in $\cB$ and  there exists at most an arrow $b$ such that $t(b)=s(a)$ and $ab$ is in $R$.
		\begin{figure}[H]
		\centering
		\tikzstyle{help lines}+=[dashed]
		{\footnotesize
			\begin{tikzpicture}[auto, thick,yscale=0.9]+=[dashed]
					\node (0) at (0:0) {$\bullet$};
					\node (1) at ($(0)+(0:2)$) {$\bullet$};
					\node (2) at ($(1)+(0:2.8)$) {$\bullet$};
					
					\draw[->] (0) to node {$b$} (1);
					\draw[->] (1) to [out=40, in=140, looseness=1]  node {$a$} (2);
					\draw[->,dashed] (2) to node [below]{$r_2$} (1);
					\draw[->,dashed] (1) to [out=-40, in=220, looseness=1]   node  [below] {$r_1$} (2);
					\draw[style=help lines] (1) +(50:.45cm) arc (50:170:.45cm);
					\draw[style=help lines] (1) +(330:.7cm) arc (330:360:.7cm);
					\draw[style=help lines] (2) +(145:.7cm) arc (145:180:.7cm);
			\end{tikzpicture}}
		\caption{Local configuration in $Q$ if there exists $a$.}
		\label{fig:case4-2}
		\end{figure}
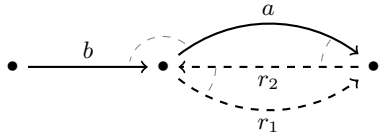
The differentials are as above and we obtain the same conclusion.

(iii) If $q_1 \in Q_0$  and $q_2\notin Q_0$ the proof is similar to  the previous case.
	
	Part (3) follows from  Proposition \ref{a||a} and part (4) follows from Corollary \ref{loop}.
	\hfill$\Box$

\bigskip

As a consequence of Theorem~\ref{HH^1} we immediately recover Theorem 1 in  \cite{Cibils-Saorin}.

\subsection{$HH_1(A)^*$ of a gentle algebra $A$}

We will interpret the dual $HH_1(A)^*$ of the first Hochschild homology group $HH_1(A)$ in terms of cyclic pairs. Before stating the main theorem of this section, let us fix some notation.

We denote by $\cB^*$ the dual basis of the vector space $DA$.

\begin{definition}{\rm 
		Two paths $p, q$ of $Q$ are called a {\em cyclic pair} if $s(p)=t(q)$ and $t(p)=s(q)$. If $X$ and $Y$ are sets of paths of $Q$, the {\em set $X\odot Y$ of cyclic pairs} is formed by couples $(p,q)$ in $X\times Y$ such that $p$ and $q$ are a cyclic pair. Also, $X\odot Y^*$ is the set formed by $(p,q^*) \in X\times Y^*$ with $(p,q)\in X\odot Y$ and denote by $K(X\odot Y^*)$ the vector space with basis the set $X\odot Y^*$.}
\end{definition}

\begin{theorem}\label{basisHH_1}  Let $A = KQ/I$ be a gentle algebra and $a, b \in Q_1$, $e \in Q_0$. 
	The classes of the following elements form a basis of $HH_1(A)^*$. 
	\begin{enumerate}
			\item $(a,b^*)-(b,a^*) \in K(Q_1 \odot \cB^*)$, where $ab,ba \in I$. 
		 \item $(a, e^*) \in K(Q_1 \odot \cB^*)$.
		\item If ${\rm char } K=2$ in addition to the above, we have the elements $(a,a^*) \in K(Q_1 \odot \cB^*)$.
	\end{enumerate}
\end{theorem}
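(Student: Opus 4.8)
The plan is to realise $HH_1(A)^*$ as the first cohomology of the $K$-dual of the complex computing $HH_*(A)$. For a finite-dimensional algebra there is a natural isomorphism $HH_n(A)^*\cong HH^n(A,DA)$, and the latter is computed by a cyclic-pair version of Strametz's complex in Proposition~\ref{prop::str}: the cyclic-pair spaces $K(X\odot\cB^*)$ take over the role played there by the parallel-path spaces $K(X\vert\vert\cB)$. So I would work with the low-degree complex
\[K(Q_0\odot\cB^*)\xrightarrow{\ \partial^0\ }K(Q_1\odot\cB^*)\xrightarrow{\ \partial^1\ }K(R\odot\cB^*),\]
obtained by dualising the Hochschild boundary on Bardzell's minimal bimodule resolution, and identify $HH_1(A)^*=\Ker\partial^1/\Im\partial^0$ inside $K(Q_1\odot\cB^*)$.

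The second step is to record the differentials on the relevant generators; these are dual to $d^0$ and $d^1$ and are given by splitting a cycle, respectively splitting a relation. The values I need are: $\partial^1(a,e^*)=0$ whenever $a$ is a loop at $e$ (the trivial path $e$ carries no arrow to split); for a two-cycle $a,b$ with $ab,ba\in I$, the images $\partial^1(a,b^*)$ and $\partial^1(b,a^*)$ consist of the same two summands indexed by $(ab,\cdot)$ and $(ba,\cdot)$, so that $\partial^1\big((a,b^*)-(b,a^*)\big)=0$ while the symmetric combination maps to twice that sum; and for a loop $a$ one computes $\partial^1(a,a^*)=2\,(a^2,e^*)$. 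Finally $\partial^0(e,c^*)=(a,c'^*)-(a',c''^*)$, where $a$ and $a'$ are the first and last arrows of the cycle $c=ac'=c''a'$; these relations are what let me rewrite longer cyclic pairs in terms of shorter ones in the quotient.

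The heart of the argument is then a kernel/image computation entirely analogous to the one carried out for $HH^1(A)$. Using the gentle hypotheses — at each arrow there is at most one arrow continuing it outside $I$ and at most one completing a relation with it inside $I$ — together with a coincidence lemma in the spirit of Proposition~\ref{lc}, I would show that every cocycle is, modulo $\Im\partial^0$, a combination of the listed elements: the classes $(a,e^*)$ for loops and $(a,b^*)-(b,a^*)$ for two-cycles of relations are cocycles by the computation above, and $\partial^0$ expresses all remaining cyclic pairs $(a,m)$ with $m$ of positive length either as such combinations or as coboundaries. One then checks linear independence modulo $\Im\partial^0$ and, if needed, matches the count against the dimension of $HH_1(A)$ recorded in \cite{CRS}, so that the listed elements indeed form a basis.

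It remains to treat the characteristic-two case, which is where the argument genuinely bifurcates and which I expect to be the main obstacle to phrase cleanly. Exactly as in Corollary~\ref{loop}, the factor $2$ in $\partial^1(a,a^*)=2\,(a^2,e^*)$ vanishes, so for each loop $a$ the element $(a,a^*)$ becomes a new cocycle; one checks it is not a coboundary (no $\partial^0(e,c^*)$ produces the term $(a,a^*)$), giving the extra basis element in item~(3). The delicate point throughout is ensuring that the gentle combinatorics really do eliminate all cyclic pairs other than those coming from loops and two-cycles of relations, i.e.\ that $\Ker\partial^1$ is no larger than expected; this is the step that most closely parallels, and is dual to, the shortcut/deviation analysis already used for $HH^1(A)$.
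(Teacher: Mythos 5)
Your proposal follows essentially the same route as the paper: the paper also identifies $HH_1(A)^*$ with the first cohomology of the dual cyclic-pair complex $K(Q_0\odot\cB^*)\to K(Q_1\odot\cB^*)\to K(R\odot\cB^*)$, lists the same generators (loops with $(a,s(a)^*)$ and $(a,a^*)$, two-cycles of relations, and the pairs coming from longer cycles in $\cB$), and computes exactly the differentials you state — in particular $d_1(a,a^*)=2(a^2,s(a)^*)$, the cancellation giving $(a,b^*)-(b,a^*)\in\Ker d_1$, and $d_0(s(p),p^*)$ killing the longer cyclic pairs — before reading off the basis. Your sketch is correct and matches the paper's argument, including the characteristic-two bifurcation.
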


Note that in cases (2) and (3) it follows from the fact that $A$ is finite dimensional and gentle that $a^2 \in I$. 

\begin{figure}[H]
	{\tiny
		\centering
		\begin{tikzpicture}[auto, thick, xscale=1.1, yscale=0.9]
		\clip (-1,-2.1) rectangle (11.1,1.2);
		\node[cg] (0) at (0:0){\,\,\,};
		\node () at ($(0)-(0:.5)$){$\times$};
		\node[cg] (1) at ($(0)+(0:2.5)$){\,\,\,};
		\node () at ($(1)+(0:.5)$){$\times$};
		\draw[-] (0) to [out=60, in=120, looseness=1.1] node   {} (1);
		\draw[-] (1) to [out=240, in=300, looseness=1.1]  node  {} (0);
		
		\draw[-,dashed] (0) to node  {} ($(0)+(135:1.3)$);
		\draw[-,dashed] (0) to node  {} ($(0)+(225:1.3)$);
		\draw[-,dashed] (1) to node  {} ($(1)+(45:1.3)$);
		\draw[-,dashed] (1) to node  {} ($(1)+(-45:1.3)$);
		
		\draw[->] ($(0)+(40:1)$) to [bend left=30] node [left]{$a$} ($(0)+(-40:1)$);
		\draw[->] ($(1)+(220:1)$) to [bend left=30]  node [right] {$b$} ($(1)+(140:1)$);
		{\normalsize \node () at ($(0)+(0:1.25)+(-90:1.7)$){(1.a)};}
		\node[cg] (1b) at ($(1)+(0:3.4)$){\,\,\,};
		\node () at ($(1b)-(0:.45)$){$\times$};
		\node () at ($(1b)+(90:0.8)$){$a$};
		\node () at ($(1b)+(-90:0.8)$){$b$};
		\draw[-] (1b) to [out=50, in=-50, looseness=30] node {}(1b);
		\draw[-] (1b) to [out=130, in=230, looseness=30] node {}(1b);
		\draw[->] (1b) +(125:.6cm) arc (125:55:.6cm);
		\draw[->] (1b) +(-55:.6cm) arc (-55:-125:.6cm);
		\draw[-,dashed] (1b) to node  {} ($(1b)+(150:1.1)$);
		\draw[-,dashed] (1b) to node  {} ($(1b)+(210:1.1)$);
		\draw[-,dashed] (1b) to node  {} ($(1b)-(150:1.1)$);
		\draw[-,dashed] (1b) to node  {} ($(1b)-(210:1.1)$);
		{\normalsize \node () at ($(1b)+(-90:1.7)$){(1.b)};}
		
		\node[cg] (2) at ($(1b)+(0:3.4)$){\,\,\,};
		\node () at ($(2)-(0:.4)$){$\times$};
		\node () at ($(2)+(0:0.8)$){$a$};
		\draw[-] (2) to [out=45, in=-45, looseness=29] node {}(2);
		\draw[->] (2) +(35:.6cm) arc (35:-35:.6cm);
		\draw[-,dashed] (2) to node  {} ($(2)+(135:1.3)$);
		\draw[-,dashed] (2) to node  {} ($(2)+(225:1.3)$);
		{\normalsize \node () at ($(2)+(0:0.25)+(-90:1.7)$){(2)-(3)};}
		\end{tikzpicture}
	}
	\caption{Local configurations giving rise to basis elements of $HH_1(A)^*$ as described in Theorem~\ref{basisHH_1} (1) and (2) (and (3)).}
	\label{fig:config 2}
\end{figure}
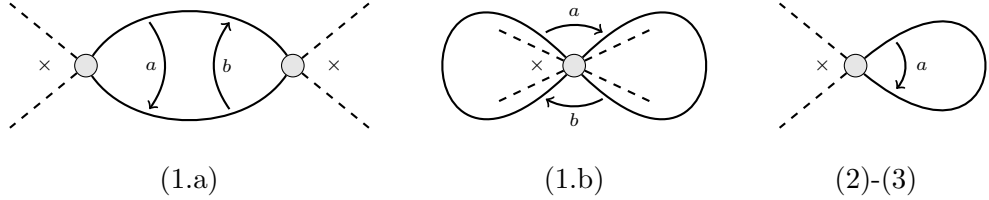

\begin{proposition}
	Let $A=KQ/I$ be a monomial algebra, the dual of the Hochschild homology of $A$ in low degrees is the cohomology of the cochain complex
	\[0\longrightarrow K(Q_0\odot \cB^*)\xrightarrow{d_0} K(Q_1\odot \cB^*) \xrightarrow{d_1} K(R\odot \cB^*)\rightarrow...\]
	where \[d_0(e,p^*)=\sum_{a\in Q_1e} \chi_{\cB^*}(ap^*)(a, ap^*)-\sum_{a\in eQ_1} \chi_{\cB^*}(p^* a)(a, p^* a),\]
	\[d_1(a,p^*)=\sum_{q=\theta a \nu} \chi_{\cB^*}(\theta p^* \nu) (q, \theta p^* \nu). \]
\end{proposition}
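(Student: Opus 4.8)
The plan is to mirror the proof of Proposition~\ref{prop::str}, replacing the functor $\Hom_{A^e}(-,A)$ by $A\otimes_{A^e}-$ and then dualizing over $K$. Let $E=KQ_0$ be the semisimple subalgebra spanned by the vertices and let $(P_\bullet,d_\bullet)$ be the minimal projective resolution of $A$ as an $A^e$-module underlying Proposition~\ref{prop::str} (Bardzell's resolution); in low degrees its terms are $P_0=A\otimes_E E\otimes_E A$, $P_1=A\otimes_E KQ_1\otimes_E A$ and $P_2=A\otimes_E KR\otimes_E A$, with generators indexed by $Q_0$, $Q_1$ and $R$. Since $K$ is a field, $\HH_n(A)=\Tor^{A^e}_n(A,A)=H_n(A\otimes_{A^e}P_\bullet)$.

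First I would identify the terms of the chain complex $A\otimes_{A^e}P_\bullet$. Using the standard isomorphism $A\otimes_{A^e}(A\otimes_E V\otimes_E A)\cong A\otimes_{E^e}V$, where $E^e=E\otimes E^{op}$, one checks that the $E^e$-balancing now forces the \emph{cyclic} (rather than parallel) matching condition: a basis element is a pair $(p,\rho)$ with $p\in\cB$, $\rho$ a generator, and $s(p)=t(\rho)$, $t(p)=s(\rho)$. This is exactly the opposite of what occurs in the cohomological computation, and it is the source of the cyclic pairs. Hence $A\otimes_{A^e}P_0\cong K(Q_0\odot\cB)$, $A\otimes_{A^e}P_1\cong K(Q_1\odot\cB)$ and $A\otimes_{A^e}P_2\cong K(R\odot\cB)$, giving a chain complex
\[\cdots\longrightarrow K(R\odot\cB)\xrightarrow{\ \partial_2\ }K(Q_1\odot\cB)\xrightarrow{\ \partial_1\ }K(Q_0\odot\cB)\longrightarrow 0\]
computing $\HH_*(A)$ in low degrees. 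I would then read off the induced differentials from $d_1,d_2$: as in Strametz's computation, $\partial_1$ sends $(a,p)$ to the contributions of $ap$ and $pa$ that survive in $\cB$, while $\partial_2$ sends $(q,p)$ with $q\in R$ to $\sum(a,r)$ over the factorizations $q=\theta a\nu$ and $p=\theta r\nu$.

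Finally I would dualize. Since $(-)^*=\Hom_K(-,K)$ is exact over a field, $\HH_n(A)^*=H_n(A\otimes_{A^e}P_\bullet)^*\cong H^n\big((A\otimes_{A^e}P_\bullet)^*\big)$. Under the dual-basis identification $K(Q_n\odot\cB)^*\cong K(Q_n\odot\cB^*)$, the transpose complex is exactly
\[0\longrightarrow K(Q_0\odot\cB^*)\xrightarrow{\ d_0\ }K(Q_1\odot\cB^*)\xrightarrow{\ d_1\ }K(R\odot\cB^*)\longrightarrow\cdots\]
with $d_0=\partial_1^*$ and $d_1=\partial_2^*$. Transposing the formulas for $\partial_1,\partial_2$ then yields the claimed expressions: $d_0(e,p^*)$ records, with the indicator $\chi_{\cB^*}$, the ways of pre- and post-composing $p^*$ with an arrow at $e$, and $d_1(a,p^*)$ sums over the factorizations $q=\theta a\nu$ with $q\in R$ the insertions $\theta p^*\nu$ that remain in $\cB^*$.

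The main obstacle is bookkeeping rather than conceptual: pinning down the differentials $\partial_1,\partial_2$ of the resolution with the correct orientation and signs, and verifying that their $K$-duals reproduce exactly the $\chi_{\cB^*}$-weighted formulas above. In particular one must check that the cyclic matching condition, which replaces parallel paths by anti-parallel ones, is precisely what turns the replacement $q^{(a,p)}$ governing the cohomological $d^1$ into the factorization-and-insertion rule $q=\theta a\nu\mapsto\theta p^*\nu$ defining $d_1$.
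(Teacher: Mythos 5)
Your argument is correct and is essentially the one the paper intends: the paper's proof consists of a single reference to Strametz (Th\'eor\`eme 4.2.2 (iii)), where the same complex is obtained from the minimal projective resolution of the monomial algebra, with the cyclic (rather than parallel) matching condition arising from the $E^e$-balancing exactly as you describe. The only cosmetic difference is that you compute $HH_*(A)$ by tensoring and then dualize over $K$, whereas the cited source reads the complex off as $\Hom_{A^e}(P_\bullet,DA)$ computing $H^*(A,DA)\cong HH_*(A)^*$; these agree by the adjunction $\Hom_{A^e}(P,\Hom_K(A,K))\cong\Hom_K(A\otimes_{A^e}P,K)$, so nothing is lost.
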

\begin{proof}
The proof is similar to the one of Theorem 4.2.2 (iii) in  \cite{Str}.
\end{proof}
	
	Note that $K(Q_0\odot \cB^*)$ is generated by $(e, e^*)$, for every $e\in Q_0$, and $(s(p), p^*)$ where $p$ is a cycle in $\cB$. The differential $d_0$ is as follows:
	\[ \begin{array}{lcll}
			d_0: & K(Q_0 \odot \cB^*)& \longrightarrow & K(Q_1\odot \cB^*)\\
			&(e, e^*) & \longmapsto & 0 \\
			& (s(p), p^*) & \longmapsto & \left\lbrace 
				\begin{array}{l}
					0 \text{ if } p\in Q_1,\\
					(p_1, (p_n ... p_2)^*)-(p_n , (p_{n-1}...p_1)^*)  \text{ if } p=p_n...p_1\in \cB\setminus Q_1.
				\end{array}
	 			\right. 
	\end{array}\]
	$K(Q_1\odot \cB^*)$ is generated by the elements
	\begin{itemize}
		\item $(a,s(a)^*)$ and $(a,a^*)$ where $a$ is a loop,
		\item $(a, q^*)$  where $aq$ is a cycle in $Q$, $a\in Q_1$, $q\in \cB\setminus Q_0$ and  $aq, qa \in I$,
		\item $(a_1,  (a_n...a_2)^*)$ and $(a_n,  (a_{n-1}...a_1)^*)$ with $a_n...a_1$ a cycle in $\cB$ and $n\geq2$.
	\end{itemize}
	Given $q=q_m...q_1$ in $\cB$ with $q_1,...,q_m $ arrows, the differential $d_1$ is given by:
	\[\begin{array}{lcll}
		d_1: & K(Q_1\odot \cB^*) & \longrightarrow & K(R\odot \cB^*) \\
		&(a, s(a)^*) & \longmapsto & 0 \text{ if } a \text{ is a loop}\\
		& (a, a^*) & \longmapsto & 2(a^2, s(a)^*) \text{ if } a \text{ is a loop}\\
		& (a,q^*) &\longmapsto &\left\lbrace 
	\begin{array}{l}
		(aq, t(a)^*) +(qa, s(a)^*) \text{ if } q\in Q_1 \text{ and } aq,qa\in I,\\
		(q_1 a , (q_m...q_2 )^*) + (aq_m , (q_{m-1}...q_1 )^*)  \text{ if }  q\notin Q_ 1, aq,qa\in I.
	\end{array}\right. \\
		& (a_1, (a_n...a_2)^*)& \longmapsto & (a_1a_n, (a_{n-1}...a_2)^*)\\
		& (a_n, (a_{n-1}...a_1)^*)& \longmapsto &(a_1a_n, (a_{n-1}...a_2)^*).
	\end{array}\]
	
The proof of Theorem  \ref{basisHH_1} is a direct consequence of the above formulas.

\subsection{$Alt_{A} (DA)$ of a gentle algebra $A$}\label{Alt}
	We use the notation and results of \cite{CRS} to find the following basis of $Alt_{A} (DA)$.

\begin{theorem}\label{basisAlt}

 Let $A = KQ/I$ be a gentle algebra. 
 
 (1) Let $p,q \in \cB$ such that $p \neq q$, $pq,qp \in I$, $s(p) = t(q)$, $s(q) = t(p)$, and $\val (s(p)) = \val (s(q)) =2$. For such pairs $\{p, q\} \in \cB$ define a function 
	$$\phi_{\{p,q\}}: DA \to A$$ given by $p^* \mapsto q$, $q^* \mapsto -p$ and zero otherwise. 
	
	 Suppose ${\rm char} K \neq 2$. Then the set of  functions $\phi_{\{p,q\}}$ as above is a basis of $Alt_A(DA)$.
%

	(2) Suppose ${\rm char} K =2$.  Let  $p  \in \cB \setminus Q_0$ be such that  $s(p)=t(p)=e$, and $\val(e)=2$. 
			For such $p$ define a function   $$\psi_{\{e,p\}}: DA \to A$$ given by $\psi_{\{e,p\}} (q^*) = r$ if $p=qr$ or  $p=rq$  and zero otherwise, and   $$\psi_{\{p,p\}}: DA \to A$$ given by $\psi_{\{p,p\}} (p^*) = p$. Then the set consisting of the functions 
	$\phi_{\{p,q\}}$ with $p,q$ as in (1), $\psi_{\{e,p\}}$ and $\psi_{\{p,p\}}$ with $p$ as in (2) is a basis of   $Alt_A(DA)$.
\end{theorem}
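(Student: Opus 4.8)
The plan is to compute $Alt_A(DA)$ directly from its defining conditions, exploiting the gentle structure to pin down exactly which skew-symmetric bilinear forms can satisfy the balancing relation $\alpha(fa,g)=\alpha(f,ag)$. Since $\cB^*$ is a basis of $DA$, any $\alpha \in Alt_A(DA)$ is determined by its values $\alpha(p^*,q^*)$ for $p,q \in \cB$, and skew-symmetry forces $\alpha(p^*,p^*)=0$ when ${\rm char}\,K\neq 2$ but allows diagonal terms when ${\rm char}\,K = 2$ --- this is the source of the extra basis elements $\psi_{\{p,p\}}$ in part (2). First I would translate the $A$-bimodule balancing condition $\alpha(fa,g)=\alpha(f,ag)$ into relations on the coefficients $\alpha(p^*,q^*)$ by testing it on basis elements $f=p^*$, $g=q^*$ with $a$ an arrow, using the standard formulas for how arrows act on the dual basis (namely $p^*a$ and $ap^*$ are again dual basis vectors or zero, since $A$ is monomial). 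Unwinding these gives a system of linear equations whose solution space I must identify with the span of the $\phi_{\{p,q\}}$ (and, in characteristic $2$, also the $\psi_{\{e,p\}}$ and $\psi_{\{p,p\}}$).

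Concretely, I would argue that the balancing condition, propagated along paths, shows that $\alpha(p^*,q^*)$ can be nonzero only when $p$ and $q$ are ``linked'' in the sense that $s(p)=t(q)$, $s(q)=t(p)$, and both compositions $pq$ and $qp$ vanish in $A$, i.e. $pq,qp\in I$ --- otherwise one can produce a contradiction by moving arrows across $\alpha$ until a nonzero value is forced to equal zero. The valency-two hypotheses $\val(s(p))=\val(s(q))=2$ should emerge as exactly the condition guaranteeing that the arrow action is unambiguous (no competing arrows to spoil the balancing), which is where the gentle axioms enter decisively: at a valency-two vertex a gentle algebra has a unique way to extend or truncate a path, so the relation pins $\alpha$ down to a single free parameter per eligible pair $\{p,q\}$. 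Assembling these constraints, each eligible unordered pair contributes precisely one degree of freedom, realized by $\phi_{\{p,q\}}$, and I would verify directly that each $\phi_{\{p,q\}}$ indeed lies in $Alt_A(DA)$ (skew-symmetry is built in by the sign $p^*\mapsto q$, $q^*\mapsto -p$; balancing is checked on arrows using the valency-two condition).

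For part (2) in characteristic $2$, the plan is to redo the same analysis but now retain the diagonal contributions that skew-symmetry no longer kills. The new phenomena are self-cyclic paths: when $p\in\cB\setminus Q_0$ is a cycle at a valency-two vertex $e$ with $s(p)=t(p)=e$, the balancing condition admits solutions supported on the proper factorizations $p=qr$, giving $\psi_{\{e,p\}}$, together with a purely diagonal solution $\psi_{\{p,p\}}$ sending $p^*\mapsto p$. I would check these are well-defined elements of $Alt_A(DA)$ and that they are linearly independent from the $\phi_{\{p,q\}}$, then perform a dimension count (or a basis-exhaustion argument) confirming nothing else survives.

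The main obstacle I anticipate is the careful bookkeeping in the balancing argument: showing that $\alpha(p^*,q^*)$ is forced to vanish outside the listed configurations requires propagating the relation $\alpha(fa,g)=\alpha(f,ag)$ along entire paths and handling the boundary cases where an arrow action sends a dual basis vector to zero (because the resulting path is not in $\cB$, or leaves the socle). Rather than reproving the combinatorial skeleton from scratch, I would lean on the machinery of \cite{CRS}, which already describes $Alt_A(DA)$ for monomial algebras in terms of the quiver, and specialize their description to the gentle case; the genuinely new content is then the explicit identification of the basis with the ribbon-graph-adapted elements $\phi_{\{p,q\}}$, $\psi_{\{e,p\}}$, $\psi_{\{p,p\}}$ and the verification that the valency and relation conditions are precisely the ones coming from the gentle axioms.
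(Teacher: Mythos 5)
Your proposal is correct and follows essentially the same route as the paper: the paper also reduces to the description of $Hom_{A-A}(DA,A)\cong Hom_K(DA\otimes_{A-A}DA,K)$ from \cite{CRS} via (neat equivalence classes of) cyclic pairs, imposes the antisymmetry condition $\lambda_N+\lambda_{\upsilon(N)}=0$, and then observes that the gentle axioms restrict the neat configurations to exactly the three local pictures yielding $\phi_{\{p,q\}}$ and, in characteristic $2$, $\psi_{\{e,p\}}$ and $\psi_{\{p,p\}}$. Your ``propagation of the balancing relation along arrows'' is precisely the \cite{CRS} equivalence relation $(ap,q)\sim(p,qa)$, $(pb,q)\sim(p,bq)$, and your valency-two observation matches the paper's remark that any extra arrow at the relevant vertex destroys neatness.
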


Note that for the definition of $\phi_{\{p,q\}}$  the order in the pair does in fact affect the signs. However, we disregard this since  the parametrisation of the basis and the Lie structure do not depend on this choice.

In terms of the ribbon graph of $A$, the basis elements of $Alt_{A} (DA)$ correspond to the following local configurations in the ribbon graph $\Gamma_A$. 

\begin{figure}[H]
	{\tiny
		\centering
		\begin{tikzpicture}[auto, thick, xscale=0.9,yscale=0.85]
		\clip (-2,-2.3) rectangle (9,1.6);
		\node[cg] (0) at (0:0){$p$};
		\node[cg] (1) at ($(0)+(0:2.5)$){$q$};
		\node () at ($(0)+(0:.6)$){$\times$};
		\node () at ($(1)-(0:.6)$){$\times$};
		\draw[-] (0) to [out=60, in=120, looseness=1.1] node   {} (1);
		\draw[-] (1) to [out=240, in=300, looseness=1.1]  node  {} (0);
		\draw[-] (0) to node {} ($(0)+(120:1.55)$);
		\draw[-] (0) to node {} ($(0)+(150:1.55)$);
		\draw[-] (0) to node {} ($(0)+(210:1.55)$);
		\draw[-] (0) to node {} ($(0)+(240:1.55)$);
		
		\node () at ($(0)+(280:1.1)$){$p_1$};
		\draw[->] (0)+(305:.8cm) arc (305:245:.8cm);
		\node () at ($(0)+(225:1.1)$){$p_2$};
		\draw[->] (0)+(235:.8cm) arc (235:215:.8cm);
		\draw[-,dashed] (0)+(205:.8cm) arc (205:155:.8cm);
		\node () at ($(0)+(135:1.2)$){$p_{n-1}$};
		\draw[->] (0)+(145:.8cm) arc (145:125:.8cm);
		\node () at ($(0)+(80:1.1)$){$p_{n}$};
		\draw[->] (0)+(115:.8cm) arc (115:55:.8cm);
		\draw[-] (1) to node {} ($(1)+(60:1.55)$);
		\draw[-] (1) to node {} ($(1)+(30:1.55)$);
		\draw[-] (1) to node {} ($(1)+(-30:1.55)$);
		\draw[-] (1) to node {} ($(1)+(-60:1.55)$);
		
		\node () at ($(1)+(95:1.1)$){$q_1$};
		\draw[->] (1)+(125:.8cm) arc (125:65:.8cm);
		\node () at ($(1)+(45:1.1)$){$q_2$};
		\draw[->] (1)+(55:.8cm) arc (55:35:.8cm);
		\draw[-,dashed] (1)+(25:.8cm) arc (25:-25:.8cm);
		\node () at ($(1)+(-42:1.25)$){$q_{m-1}$};
		\draw[->] (1)+(-35:.8cm) arc (-35:-55:.8cm);
		\node () at ($(1)+(-95:1.1)$){$q_{m}$};
		\draw[->] (1)+(-65:.8cm) arc (-65:-125:.8cm) ;
		{\normalsize \node () at ($(0)+(0:1.25)+(-90:1.9)$){(1)};}
		\node[cg] (2) at ($(1)+(0:4.5)$){$p$};
		\node () at ($(2)+(0:.6)$){$\times$};
		\draw[-] (2) to [out=40, in=-40, looseness=25] node {}(2);
		\draw[-] (2) to node {} ($(2)+(120:1.7)$);
		\draw[-] (2) to node {} ($(2)+(150:1.55)$);
		\draw[-] (2) to node {} ($(2)+(210:1.55)$);
		\draw[-] (2) to node {} ($(2)+(240:1.7)$);
		
		\node () at ($(2)+(280:1.1)$){$p_1$};
		\draw[->] (2)+(315:.8cm) arc (315:245:.8cm);
		\node () at ($(2)+(225:1.1)$){$p_2$};
		\draw[->] (2)+(235:.8cm) arc (235:215:.8cm);
		\draw[-,dashed] (2)+(205:.8cm) arc (205:155:.8cm);
		\node () at ($(2)+(137:1.15)$){$p_{n-1}$};
		\draw[->] (2)+(145:.8cm) arc (145:125:.8cm);
		\node () at ($(2)+(80:1.1)$){$p_{n}$};
		\draw[->] (2)+(115:.8cm) arc (115:45:.8cm);
		{\normalsize \node () at ($(2)+(-90:1.9)$){(2)};}
		\end{tikzpicture}
	}
	\caption{Local configurations of $\Gamma_A$ contributing one basis element to $Alt_A(DA)$ where (2) only occurs in ${\rm char} K =2$ and where $p = p_n \ldots p_1$ and $q = q_m \ldots q_1$. }
	\label{fig:config 3}
\end{figure}
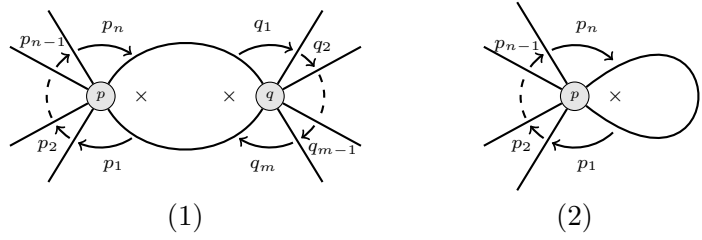

	To prove the theorem recall that 
	\[Alt_A(DA)=\{\psi\in Hom_{A-A}(DA,A) \mid f\psi(g)+\psi(f)g=0 \text{ for any} f,g \in DA\}.\]
	By adjunction we have
	\[Hom_{A-A}(DA,A)\cong Hom_{K}(DA\otimes_{A-A} DA, K).\]
	It is clear that  $\{p^*\otimes_{A-A}q^*\mid (p, q)\in \cB\odot \cB \}$ generates  $DA\otimes_{A-A} DA$.
	
	\begin{definition}
		A cyclic pair $(p,q) \in \cB\odot \cB$  is said to be neat if the following conditions hold:
		\begin{enumerate}[i)]
			\item if $q a \in \cB$ (resp. $aq \in \cB$)  for some $a\in Q_1$ then $a$ is the last (resp. first) arrow in $p$.
			\item if $p b \in \cB$ (resp. $bp \in \cB$)  for some $b\in Q_1$ then $b$ is the last (resp. first) arrow in $q$.
		\end{enumerate}
	\end{definition}
	
	Define a equivalence relation  $\sim$ on $\cB\odot \cB$  generated  by 
	\begin{equation*}
	\begin{array}{ccl}
		(ap,q) & \sim & (p,q a),\\
		(p b,q) & \sim & (p,bq),\\
	\end{array}
	\end{equation*}
	where $ a,b \in Q_1$ and $p,q \in \cB$. An equivalence class under this relation will be called neat when all its elements are neat. Denote by $\mathcal{N}$  the set of neat equivalence classes.
	
	In \cite{CRS} the authors show that there is a bijective map between $\mathcal{N}$ and a set of generators of the vector space $DA\otimes_{A-A} DA$. They also show that  the set $\{\psi_N \}_{N\in \mathcal{N}}$ is a basis of the vector space  $Hom_{A-A}(DA,A)$), where $\psi_N:DA\longrightarrow A$ is given by $\psi_N(p^*)=\sum_{(p,q)\in N}q $ for any $p^* \in \cB^*$. Furthermore, 
	if $\psi=\sum_{N\in\mathcal{N}}\lambda_N \psi_N$ is an element of $Hom_{A-A}(DA,A)$,  then $\psi$ is in $Alt_A(DA)$ if and only if  $\lambda_N + \lambda_{\upsilon(N)}=0$  for all  $N\in \mathcal{N}$, where $\upsilon$ is the flip of pairs.

	{\it Proof of Theorem~\ref{basisAlt}.} We begin by  determining the cycles in $Q$  that give rise to neat equivalence classes. Since $A$ is gentle we have the following local configurations  in $Q$, where $p, q \in \cB\setminus Q_0$.
	\vspace{-0.2cm}
		\begin{figure}[H]
		\centering
		\tikzstyle{help lines}+=[dashed]
		\begin{tikzpicture}[auto, thick, scale=0.9, every node/.style={scale=1}]+=[dashed]
		{\footnotesize
		\clip (-1.5,-1.78) rectangle (7.9,1.2);
		\node (0) at (0:0){};
		\node (1) at ($(0)-(0:1.1)$){$e_i$};
		\node (1a) at ($(0)+(0:1.1)$){$e_j$};
		\draw[->,dashed] (1) to [out=50, in=135, looseness=1] node  {$p$} (1a);
		\draw[->,dashed] (1a) to [out=230, in=310, looseness=1]  node  {$q$} (1);
		\draw[style=help lines] (1) +(45:.5cm) arc (45:-45:.5cm);
		\draw[style=help lines] (1a) +(145:.5cm) arc (145:230:.5cm);
		\node[bl] (e) at ($(0)+(0:3.1)$){};
		\node (2) at ($(e)+(-90:0.3)$) {$e$};
		\node (3) at ($(e)+(0:2.8)$){$e$};
		\node () at ($(3)+(0:1.7)$){$p$};
		\draw[<-,dashed] (3)+(-90:0.18) arc (-165:165:.7cm);
		\draw[style=help lines] (3) +(60:.3cm) arc (60:-65:.3cm);
		}
		\node () at ($(0)+(-90:1.6)$) {(1)};
		\node () at ($(e)+(-90:1.6)$) {(2)};
		\node () at ($(3)+(0:.7)+(-90:1.6)$) {(3)};
		
		\end{tikzpicture}
	\end{figure}
	Note that if there are other arrows adjacent to the vertices $e$, $e_i$ and $e_j$, then the corresponding cyclic pair is not neat. 
	
	(1) The classes $\overline{(p,q)}$ and $\overline{(q,p)}$  consist of only one element each and $\upsilon(p,q)=(q,p)$, thus  $\psi_{\overline{(p,q)}}-\psi_{\overline{(q,p)}} \in Alt_A(DA)$.
	
	(2) The class $\overline{(e, e)}=\{(e, e)\}$ is symmetric with regard to the involution $\upsilon$, thus  $\psi_{\overline{(e, e)}}\in Alt_A(DA)$ if and only if ${\rm char }  K=2$.
		
	(3) Let $p=p_n...p_1$ be a cycle in $\cB$ and  $p_i\in Q_1$ for any $i\in\{1,...,n\}$, the classes $\overline{(e,p)}=\{(e,p), (p_j...p_1,p_n...p_{j+1}),  (p, e),  (p_n...p_{j+1},p_j...p_1); j\in\{1,...,n-1\} \}$ and $\overline{(p,p)}=\{(p,p) \}$ are symmetric  and neat, then $\psi_{\overline{(p,p)}}, \psi_{\overline{(e,p)}}\in Alt_A(DA)$ if and only if ${\rm char }  K=2$.
\hfil $\Box$
		
\begin{Ex}
	Consider the algebra $A$ given by the following quiver with ribbon graph given by the figure on the right, and $I=\left\langle ca,ac,b^2\right\rangle$.
	\vspace{-0.4cm}
	\begin{figure}[H]
		\centering
		\tikzstyle{help lines}+=[dashed]
		\begin{tikzpicture}[auto, thick,scale=.8]
			{\footnotesize
			\clip (-0.5,-1.3) rectangle (11.1,1.4);
			\node (1) at (0:0) {$e_1$};
			\node (2) at ($(1)+(0:3)$) {$e_2$};			
			\draw[style=help lines] (2) +(-30:.6cm) arc (-30:30:.6cm);
			\draw[style=help lines] (2) +(147:.6cm) arc (147:225:.6cm);
			\draw[style=help lines] (1) +(-35:.6cm) arc (-35:42:.6cm);
			\draw[->] (1) to [out=50, in=135, looseness=1]  node   {$a$} (2);
			\draw[->] (2) to [out=230, in=310, looseness=1]  node  {$c$} (1);
			\draw[->] (2) to [out=37, in=-37, looseness=9] node {$b$}(2);
			\node[cg] (1b) at ($(2)+(0:5.7)$) {$cba$};
			\draw[-] (1b) to [out=40, in=-40, looseness=13] node {$e_1$}(1b);
			\draw[-] (1b) to [out=-140, in=-220, looseness=13] node {$e_2$}(1b);
			\draw[->] (1b)+(315:.8cm) arc (315:225:.8cm);
			\node () at ($(1b)+(-90:1.1)$){$a$};
			\draw[->] (1b)+(215:.8cm) arc (215:145:.8cm);
			\node () at ($(1b)+(180:1.1)$){$b$};
			\draw[->] (1b)+(135:.8cm) arc (135:45:.8cm);
			\node () at ($(1b)+(90:1.1)$){$c$};
			\node () at ($(1b)+(0:.8)$){$\times$};}
		\end{tikzpicture}
	\end{figure}
\vspace{-0.4cm}
	The  cycle $p=cba$ is as in the case $(2)$ and the class $\overline{(e_1, p)}$ is neat. Moreover, 
	 $Alt_A(DA)=K \left( \psi_{\{p,p\}}\right)  \oplus K \left(  \psi_{\{e,p\}}\right)$ if ${\rm char } K=2$, otherwise  $Alt_{A}(DA)=0$.
\end{Ex}

\subsection{Hochschild cohomology of  Brauer graph algebras as  trivial extensions of  gentle algebras}

Let $\Lambda$ be a Brauer graph algebra with multiplicities all equal to 1. Using the fact that $\Lambda$ is a trivial extension of a gentle algebra $A$ and the vector space isomorphism $HH^1(TA)\cong Z(A)\oplus HH_1(A)^*\oplus HH^1(A)\oplus Alt_A(DA)$, we summarise the results of the previous sections and obtain the following basis of  $HH^1(\Lambda)$.

\begin{theorem} Let $\Lambda$ be a Brauer graph algebra with multiplicity one and let 
$A = KQ/I$ be a gentle algebra with basis of paths $\cB$ and such that $\Lambda \cong TA$. Suppose $p,q \in \cB$.
If ${\rm char } K\neq2$, then the set  consisting of the following elements determines a basis of 
$ HH^1(\Lambda) $:

	\begin{enumerate}
		\item $\sum_{e\in Q_0} (e,e)$,
		\item $(s(p),p) \in Q_0 \vert\vert \cB$  with $p$ a non trivial cycle and $\val (s(p))= 2$,
		\item  $(a,q) \in Q_1 \vert\vert \cB$ where $a$ is a shortcut for $q$,
		\item $(a,q)  \in Q_1 \vert\vert \cB$ with $q$ a deviation via $a$,
		\item $(a,a)  \in Q_1 \vert\vert \cB$ where $a$ is an arrow and we choose exactly one arrow in each fundamental cycle in $Q$,
		\item $(a,b^*)-(b,a^*) \in Q_1 \odot \cB^*$  with $ab, ba \in I$, 
		\item $(a, e^*)  \in Q_1 \odot \cB^*$  that is $a$ is a loop and $s(a)= e$,
		\item  $\phi_{\{p,q\}} \in Hom_{A-A}(DA,A)$  given   by $p^* \mapsto q$, $q^* \mapsto -p$  if $p\neq q$, $s(p) = t(q)$, 
		$s(q) = t(p)$,  $pq,qp \in I,$ and  $\val  (s(p)) = \val  (s(q)) =2$  
		 and given by the zero otherwise.
	\end{enumerate}
	If ${\rm char }  K=2$, in addition to the above we have  the following basis elements:
	\begin{enumerate}
		\item[(9)] $(a,s(a)) \in Q_1 \vert\vert \cB$ with $a$ a loop.
		\item[(10)] $(a,a^*) \in Q_1 \odot \cB^*$ with $a$ a loop, 
		\item[(11)]  $\psi_{\{s(p),p\}}, \psi_{\{p,p\}}  \in Hom_{A-A}(DA,A)$  where $p$ is a cycle with $\val (s(p))=2$ and
		\begin{enumerate}[i)]
				\item  $\psi_{\{s(p),p\}} (q^*) = r$ if $p=qr$ or  $p=rq$ and zero otherwise,
				\item  $\psi_{\{p,p\}} (p^*) = p$ and zero otherwise.
		\end{enumerate}
	\end{enumerate}
Furthermore,  let $\Gamma_\Lambda$ be the Brauer graph of $\Lambda$ given by the (unmarked) ribbon graph of $A$. Then  the local configurations in Figure~\ref{fig:config 1} (2.b), and Figures~\ref{fig:config 4}, \ref{fig:config 5}, \ref{fig:config 6}, \ref{fig:ExampleCuts}, \ref{fig:case4-1}, \ref{fig:case4-2}, \ref{fig:config 2} and \ref{fig:config 3} give rise to  a basis of  $HH^1(\Lambda)$.
\end{theorem}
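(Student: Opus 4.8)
The plan is to assemble the asserted basis directly from the direct sum decomposition
\[HH^1(TA) \cong Z(A) \oplus HH_1(A)^* \oplus HH^1(A) \oplus Alt_A(DA)\]
of \cite{CMRS}, combined with the explicit bases of the four summands established in the previous subsections. Since $\Lambda \cong TA$ by \cite{S}, it suffices to exhibit a basis of $HH^1(TA)$; and because the decomposition is a direct sum of vector spaces, the (disjoint) union of bases of the four summands is automatically a basis of the whole. The only genuine task is thus bookkeeping: to verify that each family of elements listed in the statement corresponds, under the four coordinate inclusions, to the basis of the appropriate summand, and to separate off the contributions that occur only in characteristic $2$.

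Concretely, I would proceed summand by summand. For the centre $Z(A)$, identified as in the introduction with $Hom_{A-A}(DA,DA)$ and realised as $\Ker d^0$ inside $K(Q_0 \vert\vert \cB)$, Lemma~\ref{basisZ(A)} gives precisely the element $\sum_{e\in Q_0}(e,e)$ together with the cycles $(s(p),p)$ for $p$ a non-trivial cycle with $\val(s(p))=2$; these are items (1)--(2). For $HH^1(A)$, Theorem~\ref{HH^1} supplies the shortcuts $(a,q)$, the deviations $(a,q)$, one class $(a,a)$ per fundamental cycle of $Q$, and---only when ${\rm char}\,K=2$---the loop classes $(a,s(a))$; these are items (3)--(5) and (9). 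For $HH_1(A)^*$, Theorem~\ref{basisHH_1} provides $(a,b^*)-(b,a^*)$ with $ab,ba\in I$, the loop elements $(a,e^*)$, and---only when ${\rm char}\,K=2$---the elements $(a,a^*)$; these are items (6)--(7) and (10). Finally, for $Alt_A(DA)$, Theorem~\ref{basisAlt} yields the forms $\phi_{\{p,q\}}$ and, in characteristic $2$, the forms $\psi_{\{s(p),p\}}$ and $\psi_{\{p,p\}}$; these are items (8) and (11). Grouping the characteristic-$2$-only contributions separately then reproduces exactly the two lists in the statement.

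The geometric reformulation follows with no further computation: each of the four component theorems was already accompanied by a translation of its basis elements into local configurations of the (marked) ribbon graph of $A$, namely Figure~\ref{fig:config 1}(2.b) for the centre, Figures~\ref{fig:config 4}, \ref{fig:config 5}, \ref{fig:config 6} and \ref{fig:ExampleCuts} for $HH^1(A)$ (with Figures~\ref{fig:case4-1} and \ref{fig:case4-2} recording the deviation analysis), Figure~\ref{fig:config 2} for $HH_1(A)^*$, and Figure~\ref{fig:config 3} for $Alt_A(DA)$. Since the Brauer graph $\Gamma_\Lambda$ of $\Lambda$ is precisely the unmarked ribbon graph of $A$, reading these configurations in $\Gamma_\Lambda$ delivers the claimed basis.

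I would expect no analytic obstacle here, as the substantive work resides entirely in the four preceding theorems. The single point demanding care is that the coordinate maps into the summands---in particular the inclusion $HH^1(A)\hookrightarrow HH^1(TA)$ which, by \cite{Str}, sends a derivation $\varphi$ to $(-D(\varphi),0,\varphi,0)$---are linear isomorphisms onto their respective summands. This ensures that a basis of each summand lifts to a linearly independent set whose union spans $HH^1(TA)$, so that the assembled family is genuinely a basis and not merely a spanning set.
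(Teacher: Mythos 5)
Your proposal is correct and follows exactly the route the paper itself takes: the theorem is obtained by combining the vector-space decomposition $HH^1(TA)\cong Z(A)\oplus HH_1(A)^*\oplus HH^1(A)\oplus Alt_A(DA)$ with the bases of the four summands established in Lemma~\ref{basisZ(A)}, Theorem~\ref{HH^1}, Theorem~\ref{basisHH_1} and Theorem~\ref{basisAlt}, and then reading off the corresponding local configurations in the ribbon graph. Your bookkeeping of which items belong to which summand, and of the characteristic-$2$ contributions, matches the paper's.
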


 It follows from \cite{S} that given a Brauer graph algebra $\Lambda$ with multiplicity one, there exists a gentle algebra $B$ such that $\Lambda = TB$ and that the gentle algebra $B$ is not unique. In the next corollary we prove that there always exists a choice of a gentle algebra $B$ such that such that $Alt_B(DB) = 0$. 

	\begin{corollary}
		Let $A=KQ/I$ be a gentle algebra. There exists a gentle algebra $B$ such that $TA\cong TB$ and $Alt_B(DB)=0$.
	\end{corollary}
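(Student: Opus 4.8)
The plan is to use the reconstruction of $A$ from $TA$ recalled above: the quiver of $TA$ is $Q\cup\{\beta_m\mid m\in\cM\}$, where each $\beta_m$ is the arrow that closes the special cycle at the vertex $m$ of the ribbon graph $\Gamma=\Gamma_A$, and $A$ is recovered by deleting all the $\beta_m$. By \cite{S}, deleting instead exactly one arrow from each special cycle of the Brauer quiver of $TA$ again produces a gentle algebra $B$ with $TB\cong TA$, and these deletions can be made independently at the different vertices. Such a choice amounts to selecting, at each vertex $M$ of $\Gamma$, one of the $\val(M)$ angles at $M$ as its mark. The problem is therefore reduced to finding a marking of $\Gamma$ for which the associated gentle algebra $B$ satisfies $Alt_B(DB)=0$.

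Assume first ${\rm char}\, K\neq 2$. By Theorem~\ref{basisAlt}(1), a basis of $Alt_B(DB)$ is indexed by the configurations of Figure~\ref{fig:config 3}(1): these are the $2$-cycles (bigons) of $\Gamma$ whose two bounding edges correspond to valency-$2$ vertices of $Q$ and at both of whose endpoints the mark is placed in the angle facing into the bigon. Calling an angle at $M$ \emph{safe} when it is not the inside angle of such a bigon, it suffices to mark every vertex of $\Gamma$ at a safe angle: then no bigon carries both marks inside, the basis above is empty, and $Alt_B(DB)=0$.

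The heart of the argument is the claim that every vertex $M$ of $\Gamma$ possesses a safe angle. The inside angle of a relevant bigon at $M$ lies between two cyclically consecutive edges of $M$ having a common other endpoint $N$. Were all $\val(M)$ angles of this form, then tracking these common endpoints around the cyclic order at $M$ would force every edge at $M$ to join $M$ to one and the same vertex $N$; the valency-$2$ hypothesis on the underlying $Q$-vertices would in turn force each shared $Q$-vertex to be an endpoint, and not an interior vertex, of both maximal paths meeting along it, since an interior vertex of both would have valency $4$. As a maximal path has only two endpoints, at most two edges can be shared, whence $\val(M)\le 2$; but then one of the at most two angles at $M$ faces away from the bigon and is safe, a contradiction. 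Therefore a safe angle always exists, and marking each vertex at one produces a gentle $B$ with $TB\cong TA$ and $Alt_B(DB)=0$.

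The main obstacle is exactly this extremal analysis: one must verify that the valency-$2$ condition genuinely excludes vertices all of whose angles are inside angles, the would-be obstructions being bundles of parallel edges, which collapse to the single bigon of a Kronecker-type component. In characteristic $2$ the same scheme applies once the set of forbidden angles is enlarged to include also the inside angles of the loop configurations of Figure~\ref{fig:config 3}(2); the identical parallel-edge and valency count shows that a safe angle persists at every vertex, giving $Alt_B(DB)=0$ in this case as well.
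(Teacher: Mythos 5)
Your reduction of the problem to a marking question on the Brauer graph is the right framework and matches the paper's (very terse) argument, and your identification of the obstructing configurations as the bigons of Figure~\ref{fig:config 3}(1) with both marks placed in the facing angles is correct. However, the key claim --- that every vertex $M$ of $\Gamma$ possesses a safe angle --- is false, and the argument you give for it does not work. The marked ribbon graph of the Kronecker algebra is a counterexample: two vertices $M,N$ joined by two edges $E,F$, with cyclic orders so that both angles at $M$ (and both at $N$) are facing angles of a bigon between $M$ and $N$. There is no safe angle anywhere, yet the corollary still holds for this graph, because a basis element of $Alt_B(DB)$ requires the marks at \emph{both} endpoints to sit in the two \emph{complementary} angles bounding the same bigon face; choosing the two marks ``on the same side'' (which is exactly the marking of the Kronecker algebra in the paper, as opposed to the marking giving the radical-square-zero Nakayama algebra, for which $Alt_B(DB)\neq 0$) avoids the configuration even though each mark individually is ``unsafe'' in your sense. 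Your strategy of making an independent local choice at each vertex therefore cannot succeed in general; the choices at the two endpoints of a bigon must be coordinated.

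The specific step that fails is the appeal to the valency-$2$ hypothesis. The condition $\val(s(p))=\val(s(q))=2$ in Theorem~\ref{basisAlt} is not an independent constraint on $\Gamma$ that you may use to bound $\val(M)$: once the two marks are placed in the two angles of the bigon, every remaining angle involving the edge $E_{s(p)}$ is unmarked, and there are exactly two of them, so the valency-$2$ condition holds automatically. Consequently your deduction ``at most two edges can be shared, whence $\val(M)\le 2$'' has no basis, and even granting $\val(M)=2$, the assertion that one of the two angles at $M$ ``faces away from the bigon'' is wrong: in a ribbon graph with two parallel edges, both angles at $M$ can bound bigon faces, as the Kronecker graph shows. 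The repair is not difficult --- if every angle at $M$ is unsafe, then all edges at $M$ join $M$ to a single vertex $N$ whose cyclic order is forced to be the reverse of that at $M$, so $\Gamma$ is a two-vertex ``banana'' graph, and there one simply places the marks at $M$ and at $N$ in angles bounding \emph{different} bigon faces --- but this case analysis is missing from your proof, and it is precisely the case that the statement is really about.
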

	\begin{proof}
		Consider the ribbon graph $\Gamma_A$ and remove the marking, this gives the Brauer graph of $TA$. Then one can always choose an admissible cut such that a local configuration (including the marking) as in  Figure \ref{fig:config 3}  does not appear. The marked ribbon graph corresponding to this cut determines a gentle algebra $B$ with $Alt_B(DB)=0$ and $TA\cong TB$.
	\end{proof}

 However, given a Brauer graph algebra $\Lambda$ with multiplicity one, it is not always possible to find a gentle algebra $A$ such that $\Lambda \cong TA$ and $HH_1(A) = 0$.

	\begin{Ex}
		Let $\Lambda$ be the Brauer graph algebra with the following  Brauer graph $G_\Lambda$ and quiver $Q_{\Lambda}$ where $k\geq 3$.
		\vspace{-0.2cm}
		\begin{figure}[H]
			\centering
			\tikzstyle{help lines}+=[dashed]
			\begin{tikzpicture}[auto, thick,yscale=0.9]+=[dashed]
			{\footnotesize
			\node[cg](1) at (0:0){$\,\,\,$};
			\node[cg](1a) at ($(1)+(0:2.5)$){$\,\,\,$};
			\draw[-] (1) to [out=60, in=120, looseness=1] node   {$e_1$} (1a);
			\draw[-] (1) to [out=40, in=140, looseness=1] node [below]  {$e_2$} (1a);
			\draw[style={dashed},line width=0.1mm] (1) to [out=-35, in=215, looseness=1]  node [above] {$\vdots$} (1a);
			\draw[-] (1) to [out=300, in=240, looseness=1]  node [below] {$e_k$} (1a);
			
			\node (e1) at ($(1a)+(0:3.5)+(90:0.9)$){$e_1$};
			\node (e2) at ($(e1)+(-30:1.5)$){$e_2$};
			\node (ek) at ($(e1)-(30:1.5)$){$e_{k}$};
			
			\draw[->] (e1) to [out=0, in=120, looseness=1] node {} (e2);
			\draw[<-] (e1) to [out=180, in=60, looseness=1] node {} (ek);
			\draw[->] (e2) to [out=180, in=-60, looseness=1] node {} (e1);
			\draw[<-] (ek) to [out=0, in=240, looseness=1] node {} (e1);
			
			\draw[dotted] (e1) +(-87:2cm) arc (-87:-94:2cm);
			\draw[dotted] ($(e1)-(90:1)$) +(0:1.25cm) arc (0:-14:1.25cm);
			\draw[dotted] ($(e1)-(90:1)$) +(180:1.25cm) arc (180:194:1.25cm);
			}
		
			\node (x) at ($(0)+(-90: 2.2)+(0:1.25)$){$(G_\Lambda)$};
			\node () at ($(x)+(0: 3.5)+(0:1.25)$){$(Q_{\Lambda})$};
			\end{tikzpicture}
		\end{figure}
		\vspace{-0.4cm}
		Then for  any admissible cut  with cut algebra $A$ which by \cite{S} is gentle with $\Lambda \cong TA$, we have  $HH_1(A)\neq 0$. 
	\end{Ex}

\subsection{Two particular cases}\label{tpc}

 Suppose first that $A=K$. In this case the quiver $Q_A$ is just one vertex with no arrows, and so we have $A=KQ_A$. The centre is $K$, the Hochschild homology and cohomology are both zero in positive degrees and $Alt_A(DA)$ is also zero when ${\rm char }  K\neq 2$. 
If ${\rm char }  K=2$ then it is the $K$-vector space with one generator  $\psi_{\{e,e\}}$ where $e = 1_K$ corresponding to the map 
 $\psi_{\{e,e\}}:DA\rightarrow A$ such 
that $\psi_{\{e,e\}}(e^*)=e$.  Thus 
\[HH^1(TA)=\left\lbrace \begin{array}{ll}
	 K\oplus K\left( \psi_{\{e,e\}}\right)  & \text{if } {\rm char }  K= 2,\\
	K & \text{if } {\rm char }  K\neq 2.
\end{array}
\right. \]
	We will illustrate in this case the quiver $Q_A$, the ribbon graph $\Gamma_A$ and the quiver $Q_{TA}$:
	\begin{figure}[H]
		\centering
		\tikzstyle{help lines}+=[dashed]
		\begin{tikzpicture}[auto, thick,scale=0.8]+=[dashed]
		{\footnotesize
		\clip (-0.5,-1.5) rectangle (9,0.8);
		\node[bl] (0) at (0:0){};
		\node (1) at ($(0)+(-90:0.3)$) {$e$};
		\node[cg] (2) at ($(0)+(0:3.5)$){};
		\node[cg] (2b) at ($(2)+(0:1)$){};
		\draw[-] (2) to node {$e$} (2b);
		\node (3) at ($(2b)+(0:3)$){$e$};
		\node () at ($(3)+(0:1.7)$){$\alpha$};
		\draw[<-] (3)+(-90:0.18) arc (-165:165:.7cm);
		\draw[style=help lines] (3) +(60:.3cm) arc (60:-65:.3cm);
		}
		\node () at ($(0)+(-90:1.3)$) {$Q_A$};
		\node () at ($(2)+(0:.55)+(-90:1.3)$) {$\Gamma_A$};
		\node () at ($(3)+(0:.7)+(-90:1.3)$) {$Q_{TA}$};	
		\end{tikzpicture}
	\end{figure}
Now, suppose that $A=KQ/I$ where $Q$ has one vertex $e$ and one loop $x$, and $I=\left\langle x^2\right\rangle $. The centre of $A$ is $A$ while $HH^1(A)$ and $HH_1(A)$ depend on ${\rm char }  K$ (see \cite{GJRS, HT}). More precisely
	\[HH_1(A)\cong \frac{K[x]}{\left\langle x^2, 2x\right\rangle}= \left\lbrace 
	\begin{array}{ll}
	\frac{K[x]}{\left\langle x\right\rangle} \cong K& \text{ if } {\rm char }  K\neq 2, \\
	K\oplus Kx & \text{ if } {\rm char }  K=2,
	\end{array} \right. 
	\]
	and 
	\[HH^1(A)\cong Ann\left( (x^2) '\right)=Ann\left( 2x\right) = \left\lbrace 
	\begin{array}{ll}
	Kx& \text{ if } {\rm char }  K\neq 2, \\
	K\oplus Kx & \text{ if } {\rm char }  K=2,
	\end{array} \right. 
	\]
	the description of $Alt_A(DA)$ is, in this case,
	\[Alt_A(DA)= \left\lbrace 
	\begin{array}{ll}
	 K\left( \psi_{\{x,x\}}\right) \oplus K\left( \psi_{\{e,x\}}\right) & \text{ if } {\rm char }  K= 2, \\
	0 & \text{ if } {\rm char }  K\neq 2.
	\end{array} \right.
	\]
	The corresponding quivers and ribbon graph are
	
	\begin{figure}[H]
		\centering
		\tikzstyle{help lines}+=[dashed]
		\begin{tikzpicture}[auto, thick, scale=0.8]+=[dashed]
		\clip (-0.5,-1.8) rectangle (10.5,0.9);
		{\footnotesize 
		\node (0) at (0:0){$e$};	
		\node (1) at ($(0)+(180:0.3)$) {};
		\node () at ($(1)+(0:1.9)$){$x$};
		\draw[<-] (0)+(-90:0.18) arc (-165:165:.7cm);
		\draw[style=help lines] (0) +(60:.3cm) arc (60:-65:.3cm);}
		\node () at ($(0)+(0:.6)+(-90:1.3)$) {$Q_A$};
		{\footnotesize  
		\node[cg] (2) at ($(0)+(0:3.9)$){$x$};
		\draw[-] (2)+(-90:.25) arc (-155:155:.6cm);
		\node () at ($(2)+(0:1.4)$) {$e$};
		\draw[->] (2)+(65:.4cm) arc (65:-65:.4cm);
		\node () at ($(2)+(0:0.6)$){$x$};
		\node () at ($(2)+(180:0.5)$){$\times$};
		}
		\node () at ($(2)+(0:.55)+(-90:1.3)$) {$\Gamma_{A}$};
		{\footnotesize  
		\node (3) at ($(2)+(0:4.5)$){$e$};
		\draw[->] (3) to [out=40, in=-40, looseness=13] node {$x$}(3);
		\draw[->] (3) to [out=-140, in=-220, looseness=13] node {$\overline{x}$}(3);}
		\node () at ($(3)+(-90:1.3)$) {$Q_{TA}$};
		\end{tikzpicture}
	\end{figure}

\section{The Lie algebra structure of $HH^1(A)$ and $HH^1(TA)$}

     We begin this section by recalling detailed results on how the first Hochschild (co)homology spaces of a monomial algebra embed into the first Hochschild cohomology  space of its trivial extension. The detailed description of the embeddings is necessary for the computation of the Gerstenhaber brackets. So we also recall some known results on the Gerstenhaber brackets and a well-known comparison map before explicitly determining the Lie algebra structure of the first Hochschild cohomology  group of gentle algebras and Brauer graph algebras.


		Let $A$ be a finite dimensional monomial algebra. In \cite[Th\'eor\`eme 4.1.0.3]{Str} it is shown that the  following $K$-linear map is an isomorphism
		\begin{equation*}
		\begin{array}{rcll}
		\Psi:& Der_K(TA,TA) & \longrightarrow & Z(A) \oplus Der_K(A,A)\oplus Der_K(A,DA)\oplus Alt_A(DA)\\
		&\left( \begin{matrix}
		\varphi_{A,A} & \varphi_{DA,A}\\
		\varphi_{A,DA} & \varphi_{DA,DA}
		\end{matrix}\right)
		&\longmapsto & (\varphi_{DA,DA}+D(\varphi_{A,A}))+\varphi_{A,A}+\varphi_{A,DA}+\varphi_{DA,A},
		\end{array}
		\end{equation*}
		
		where  $D(\varphi):DA\rightarrow DA$ with $D(\varphi_{A,A})(f)=f\circ\varphi_{A,A}$. This morphism also induces a linear isomorphism in cohomology, taking into account that $HH_1(A)^*$ is isomorphic to the first Hochschild cohomology space of $A$ with coefficients in $DA$.
		\[\overline{\Psi}: HH^1(TA) \longrightarrow  Z(A) \oplus HH^1(A)\oplus HH_1(A)^*\oplus Alt_A(DA).\]
	Furthermore, it is shown in \cite[Th\'eor\`eme 2.2.2.1]{Str} that the bracket defined by 
		\[[(a,p), (b,q)] =(b,q^{(a,p)}) -(a, p^{(b, q)}), \]
		for all $(a,p),(b,q) \in Q_1\vert \vert B$ induces an isomorphism of  Lie algebras between $\Ker d^1 / \Im d^0$ and $HH^1(A)$  where $d^0$ and $d^1$ are defined in \eqref{d0} and \eqref{d1} respectively.
As a consequence of  \cite[Th\'eor\`eme 4.1.0.6]{Str}, the following hold:
	
(1) The image of  $HH^1(A)$ with the usual bracket is a Lie subalgebra of $HH^1(TA)$. The factors $Z(A)$, $HH_1(A)^*$ and $Alt_A(DA)$ are abelian Lie subalgebras of $HH^1(TA)$.

(2) The action of the Lie algebra $HH^1(A)$ on the centre $Z(A)$ is given by: if $z\in Z(A)$, $\varphi_{A,A}\in HH^1(A)$, then \[[z,\varphi_{A,A}]=-\varphi_{A,A}(z).\] Also, $\cL:=HH^1(A)\oplus Z(A)$ is a Lie subalgebra of $HH^1(TA)$.

(3) The Lie algebra  $\cL$  acts on $HH_1(A)^* \cong H^1(A,DA)$ as follows
	\begin{eqnarray}
			\nonumber[z,\varphi_{A,DA}] & = &	z\varphi_{A,DA},\\    
			\nonumber [\varphi_{A,A},\varphi_{A,DA}]& = & -D(\varphi_{A,A})\circ\varphi_{A,DA}-\varphi_{A,DA}\circ\varphi_{A,A},
	\end{eqnarray}
	for any $z\in Z(A)$, $\varphi_{A,A}\in HH^1(A)$ and  $\varphi_{A,DA}\in H^1(A,DA)\cong HH_1(A)^*$. 

During the rest of the article we will use indistinctly the notations $HH_1(A)^*$ and $H^1(A,DA)$ depending on the context.

(4) The Lie algebra $\cL$ acts on  $Alt_A(DA)$: if $z\in Z(A)$, $\varphi_{A,A}\in HH^1(A)$ and $\varphi_{DA,A}\in Alt_A(DA)$ then,
	\begin{eqnarray}
		\nonumber[z,\varphi_{DA,A}] & = &	-z\varphi_{DA,A},\\    
		\nonumber [\varphi_{A,A},\varphi_{DA,A}]& = & \varphi_{A,A}\circ\varphi_{DA,A}+\varphi_{DA,A}\circ D(\varphi_{A,A}).
	\end{eqnarray}

(5) The  $K$-vector space $\cP:=H^1(A,DA)\oplus Alt_A(DA)\cong HH_1(A)^*\oplus Alt_A(DA)$ 
is invariant under the action of $\cL$ and satisfies $[\cP,\cP]\subset \cL$:
\[[\varphi_{A,DA}, \varphi_{DA,A}] = \varphi_{A,DA}\circ\varphi_{DA,A}-\varphi_{DA,A}\circ \varphi_{A,DA}-D(\varphi_{DA,A}\circ \varphi_{A,DA}),\]
for any $\varphi_{A,DA}\in H^1(A,DA)\cong HH_1(A)^*$ and $\varphi_{DA,A}\in Alt_A(DA)$. 

	We will use well-known comparison morphisms of $A$--bimodules between our resolution and the Bar resolution, 
	inducing the following linear morphisms, where $\varepsilon=a_n...a_1$
	$$\begin{array}{l}
	\begin{array}{llll}
	\overline{\varsigma_1}:& Hom_{K}(A,A)  &\longrightarrow& K(Q_1\vert \vert  \cB) \\
	& f: A\longrightarrow A& \longmapsto &\sum_{(a,\alpha)\in Q_1\vert \vert \cB}\lambda_{a,\alpha}(a,\alpha)\\
	&\,\,\,\,\,\,\,\,\,\, \varepsilon \longmapsto \sum_{\alpha\in \cB}\lambda_{\varepsilon,\alpha} \alpha& & \\
	&&&\\
	\overline{\omega_1}:& K(Q_1\vert \vert  \cB)  &\longrightarrow& Hom_{K}(A,A) \\
	& (a,\alpha) & \longmapsto & f_{(a,\alpha)}: A\longrightarrow A\\
	& & & \,\,\,\,\,\,\,\,\,\,\,\,\,\,\,\,\,\,\, \varepsilon \longmapsto \varepsilon^{(a,\alpha)}
	\end{array} \\
	\begin{array}{llll}
	\underline{\varsigma_1}:& Hom_{K}(A,DA)  &\longrightarrow& K(Q_1\odot  \cB^*)\\
	& g: A\longrightarrow DA& \longmapsto & {\sum_{(a, \alpha)\in Q_1\odot \cB}\lambda_{a,\alpha}(a,\alpha^*)}\\
	&\,\,\,\,\,\,\,\,\,\, \varepsilon \longmapsto \sum_{\alpha\in \cB}\lambda_{\varepsilon,\alpha} \alpha^*& & \\
	&&&\\
	\underline{\omega_1}:& K(Q_1\odot \cB^*) &\longrightarrow& Hom_{K}(A,DA) \\
	& (a,\alpha^*) & \longmapsto & g_{(a, \alpha^*)}: A\longrightarrow DA\\
	& & & \,\,\,\,\,\,\,\,\,\,\,\,\,\,\,\,\,\,\,\,\,\, \varepsilon \longmapsto \sum_{i=1}^{n}a^*(a_i)a_n...a_{i+1}\alpha^*a_{i-1}...a_1
	\end{array} \\
	\end{array}$$

\subsection{The Lie algebra structure of $HH^1(A)$}

Given a gentle algebra $A$, in this section we determine the Lie algebra structure of $HH^1(A)$. 

	We note that in general for  $(p,q)$ and $(p',q')$  in $HH^1(A)$, we have 
	\[ [(p,q),(p',q')]=(p',q'^{(p,q)})-(p,q^{(p',q')}). \]
	The non-zero brackets are the following:
	\begin{itemize}
		\item if $(a,q),(a',q')$ are shortcuts, then
		\[[(a,q),(a',q')]=\left\lbrace
		\begin{array}{cl}
		(a',a')-(a,a) & \text{if  }  a=q' \text{ and } a'=q,\\
		0 & \text{otherwise.}
		\end{array}\right.\]
		Note that $(a',a')-(a,a)=2(a',a')=-2(a,a)$ and that in fact the only case where the bracket
of two shortcuts is non-zero is when they are of the form $(a, b)$ and $(b, a)$ for the
Kronecker quiver with the two parallel arrows denoted $a$ and $b$, and, moreover, the characteristic is different from $2$.
		\item if $(b,q)$ is a shortcut and $(a,a)$ is determined by a fundamental cycle, then
		\[[(b,q),(a,a)]=\left\lbrace
		\begin{array}{cl}
		(b,q) & \text{if  }  a= b,\\
		-(b,q) & \text{if  $a$ is an arrow in } q,\\
		0 & \text{otherwise.}
		\end{array}\right. \]
	 It is worth noticing that different choices of the arrow $a$ in a fundamental cycle would not affect the Lie algebra structure since they only produce a change of sign in the bracket.
		\item if $(b,q)$ is a deviation via $b$ and $(a,a)$ is determined by a fundamental cycle, then
		\[[(b,q),(a,a)]=\left\lbrace
		\begin{array}{cl}
		-(b,q) & \text{if $a\neq b$ and $a$ is an arrow in } q ,\\
		0 & \text{otherwise.}
		\end{array}\right.\]
		The situation here regarding different choices of the arrow $a$ in a fundamental cycle is similar to the previous item.
	\item if ${\rm char }  K=2$, $(a,a)$ is determined by a fundamental cycle and $b$ is a loop then
		\[[(a,a),(b,e_j)]=\left\lbrace
		\begin{array}{cl}
		-(b,e_j) & \text{if }  a= b,\\
		0 & \text{otherwise.}
		\end{array}\right.\]
	
	\end{itemize}

We summarise the above in a table.

\begin{table}[H]
	\resizebox{15.5cm}{!}{
		\begin{tabular}{|c|c|>{\columncolor{gray!20}}c|c|c|c|}
			\cmidrule{1-6}
			
			\multicolumn{2}{|c|}{\multirow{2}{.8cm}{$[-,-]$}} & \multicolumn{4}{c|}{$HH^1(A)$}\\ 
			\cmidrule{3-6}
			
			\multicolumn{2}{|c|} {} & $(b',e_j)$  if ${\rm char }  K=2$ &$(a',a')$, $a'$ in fundamental cycle & $(b',q')$ deviation via $b'$ & $(b',q')$ shortcut \\ 
			\cmidrule{1-6}
			
			\multirow{9}{1.3cm}{$HH^1(A)$} & $(b,q)$ shortcut & $0$ &
			$\begin{array}{cl}
			(b,q) & \text{if }  b=a',\\
			-(b,q) & \text{if }  a' \text{ appears in  } q,\\
			0 & \text{otherwise.}
			\end{array}$ & 0 & 
			$\begin{array}{cl}
			2(q,q)& \text{if }  b=q' \text{ and }  q=b',\\
			0 & \text{otherwise.}
			\end{array}$\\
			\cmidrule{2-6}
			
			& $(b,q)$ deviation via $p$ &$0$& 
			$\begin{array}{cl}
			-(b,q) & \text{if }  a'\neq q \text{ and } a' \text{ appears in } q,\\
			0 & \text{otherwise.}
			\end{array}$ 
			& 0 \\
			\cmidrule{2-5}
			
			& $(a,a)$, $a$ in fundamental cycle &  $\begin{array}{cl}
			-(a, e_i) & \text{if }  a=p' \; (\&\; i=j),\\
			0 & \text{otherwise.}
			\end{array}$ &0 \\
			\cmidrule{2-4}
			\rowcolor{gray!20} \cellcolor{white} &  $ (b,e_i)$ if ${\rm char }  K=2$ & 0 \\
			\cmidrule{1-3}
		\end{tabular}
	}
\caption{Bracket in $HH^1(A)$.}
\label{table1}
\end{table}

\subsection{The Lie algebra structure of the first Hochschild cohomology space of a Brauer graph algebra}

Let $\Lambda$ be a Brauer graph algebra with multiplicity $1$ and let $A$ be a gentle algebra such that $\Lambda = TA$. We have the following brackets in $HH^1(\Lambda) = HH^1(TA)$. 

		(1) Bracket between $Z(A)$ and $HH^1(A)$: if $z\in Z(A)$ and $f_{(a,q)}$ is an element of the basis of $HH^1(A)$, by  \cite[Th\'eor\`eme 4.1.0.6 (2)]{Str}  we have  $[z,f_{(a,q)}]=-f_{(a,q)}(z)=-z^{(a,q)}$. Thus
		\begin{itemize}
			\item if $p$ is a cycle in $A$ with $\val(s(p))=2$ and $e_i=s(p)$, then
			\begin{equation*}
			[(e_i,p),(a',q')]= \left\lbrace
			\begin{array}{ll}
			-(e_i,e_i) & \text{if } p=a' \text{ is a loop with ${\rm char }  K=2$ and } q'=e_i,\\
			-(e_i,p) & \text{if } a'=q' \text{ and } a' \text{ is an arrow in } p,\\
			0 & \text{otherwise.}
			\end{array}
			\right.
			\end{equation*}
			\item if $z=1_A=\sum_{e_i\in Q_0} e_i$, then $\left[ \sum_{e_i\in Q_0} (e_i,e_i),(a',q')\right] =0$ for each $(a',q')\in HH^1(A).$
		\end{itemize} 
		
	We summarize the above in the following table
		
		\begin{table}[H]
			\resizebox{15.5cm}{!}{
				\begin{tabular}{|c|c|>{\columncolor{gray!20}}c|c|c|c|}
					\cmidrule{1-6}
					
					\multicolumn{2}{|c|}{\multirow{2}{1.1cm}{$[-,-]$}} & \multicolumn{4}{c|}{$HH^1(A)$}\\ 
					\cmidrule{3-6}	
					
					\multicolumn{2}{|c|} {}& $(a',e_j)$  if ${\rm char }  K=2$ & $(a',a')$ \, $a'$ in fundamental cycle& $(a',q')$ deviation via $a'$ & $(a',q')$ shortcut\\ 
					\cmidrule{1-6}
					
					\multirow{3}{0.8cm}{$Z(A)$}& $(e_i,p)$& $-\left( e_i,p^{(a',e_j)}\right)$ & $-\left( e_i,p^{(a',a')}\right) $ & 0 & 0\\ 
					\cmidrule{2-6}
					
					& $\sum_{e\in Q_0}(e,e)$ &$0$&$0$&$0$&$0$\\
					\cmidrule{1-6}
				\end{tabular}
			}
		\caption{Bracket between $Z(A)$ and $HH^1(A)$.}
		\label{table2}
		\end{table}
		
		(2) Bracket between $Z(A)$ and $H^1(A,DA)$: if $z\in Z(A)$ and $g\in H^1(A,DA)$, then
		$[z,g](\varepsilon) =  zg(\varepsilon),$
		hence, if $g_{(a,q^*)}$  is a summand of $g$   we have
		\[ zg_{(a, q^*)}(\varepsilon)=\sum_{i=1}^{n}a^*(a_i)za_{n}...a_{i+1}q^* a_{i-1}...a_1.\]
		Thus 
		\begin{align*}
		[z,(a,q^*)]_g& = 	 \sum_{ (c,r)\in Q_1\odot \cB} (z g_{(a,q^*)}(c))(r)(c, r^*) 
		 = 	\sum_{ (c, r)\in Q_1\odot \cB}a^*(c)(zq^*)(r)(c, r^*)\\
		& = \sum_{\{ r|(a, r)\in Q_1\odot \cB\}}q^*(r z)(a, r^*),
		\end{align*}
		where $[z,(a,q^*)]_g$ is the corresponding  summand in $[z,\underline{\varsigma_1}(g)]$. Using Theorems \ref{basisZ(A)} and \ref{basisHH_1} we complete the corresponding entries in Table \ref{table3}.
		
		(3) Bracket between $HH^1(A)$ and $H^1(A,DA)$: Let $f_{(a,q)}\in HH^1(A)$ and $g\in H^1(A,DA)$ with $g_{(a', q'^*)}$ and $[-,-]_g$ as in  $(2)$ above. Then 
		\begin{eqnarray}
		\nonumber [f_{(a,q)},g_{(a', q'^*)}]_g(\varepsilon)(\rho)& = & -\left( D(f_{(a,q)})\circ g_{(a', q'^*)}\right) (\varepsilon)(\rho)-\left( g_{(a', q'^*)}\circ f_{(a,q)}\right) (\varepsilon)(\rho)\\
		\nonumber& = & -\left(g_{(a', q'^*)} (\varepsilon)\circ f_{(a,q)}\right) (\rho)-\left( g_{(a', q'^*)}\left( f_{(a,q)}(\varepsilon)\right)\right) (\rho)\\
		\nonumber &=& -g_{(a',q'^*)} (\varepsilon) (\rho^{(a,q)})-\left( g_{(a',q'^*)}\left( \varepsilon^{(a,q)}\right)\right) (\rho).
		\end{eqnarray}
		Thus, 
		\begin{eqnarray}
		\nonumber[(a,q),(a', q'^*)]_g=	- \sum_{(b, r)\in Q_1\odot \cB}\left( ( g_{(a', q'^*)}(b))(r^{(a,q)})(b,r^*)+ (g_{(a', q'^*)}(b^{(a,q)}))(r)(b,r^*)\right) \\    
		\nonumber = -\sum_{(b, r)\in Q_1\odot \cB}a'^*(b) q'^*(r^{(a,q)})(b,r^*)- \sum_{\{r|(a, r)\in Q_1\odot \cB\}} \sum_{i=1}^{n} a'^*(q_i) q'^*(q_{i-1}...q_1r q_n...q_{i+1})(a,r^*)\\
		\nonumber =  -\sum_{\{r|(a', r)\in Q_1\odot \cB\}} q'^*(r^{(a,q)})(a',r^*)-\sum_{\{r|(a, r)\in Q_1\odot \cB\}} \sum_{i=1}^{n} a'^*(q_i) q'^*(q_{i-1}...q_1r q_n...q_{i+1})(a,r^*)
		\end{eqnarray}
		where $q=q_n ... q_1$. Using  Theorems \ref{HH^1} and \ref{basisHH_1} we  complete the calculations and record the results in Table \ref{table3}.
	
	\begin{table}[H]
		\resizebox{15.5cm}{!}{
			\begin{tabular}{|c|c|>{\columncolor{gray!20}}c|c|c|}
				\cmidrule{1-5}
				
				\multicolumn{2}{|c|}{\multirow{2}{1.1cm}{$[-,-]$}} &  \multicolumn{3}{c|}{$HH_1(A)^*$}\\ 
				\cmidrule{3-5}	
				
				\multicolumn{2}{|c|} {}& $(a', a'^*)$ if ${\rm char }  K=2$ &  $(a', e_j^*)$ & $(a', b'^*)-(b', a'^*)$\\ 
				\cmidrule{1-5}
				
				\multirow{3}{0.6cm}{$Z(A)$}& $(e_i,p)$&
				$\begin{array}{cl}
				(p, e_i^*) & \text{if }  p=a',\\ 
				0  & \text{otherwise.}
				\end{array}$
				& 0 & 0\\ 
				\cmidrule{2-5}
				
				& $\sum_{e\in Q_0}(e,e)$& $(a', a'^*)$ & $(a', e_j^*)$ & $(a', b'^*)-(b', a'^*)$\\
				\cmidrule{1-5}
				
				\multirow{7}{1.4cm}{$HH^1(A)$} & $(a,q) \mbox{ shortcut}$& 0 & 0 &0\\
				\cmidrule{2-5}
				
				& $(a,q)$ deviation via $a$ & 0 &0 & 0 \\
				\cmidrule{2-5}
				
				&$(a,a)$ $a$ in fundamental cycle& 0 & 
				$\begin{array}{cl}
				-(a, e_j^*)& \text{if }  a=a',\\
				0&  \text{otherwise.}
				\end{array}$ & 
				$\begin{array}{cl}
				-(a', b'^*)+(b', a'^*)& \text{if } a=a' \text{ or } a=b',\\
				0  &\text{otherwise.}
				\end{array}$\\
				\cmidrule{2-5}
				
				\rowcolor{gray!20}
				\cellcolor{white} & $(a,e_i)$ if ${\rm char }  K=2$ & 0 &
				$\begin{array}{cl}
				-(a, a^*) &\text{if }  i=j \text{ and } a=a',\\
				0 & \text{otherwise.}
				\end{array}$
				& 0 \\
				\cmidrule{1-5}
			\end{tabular}
		}
	\caption{$[Z(A),HH_1(A)^*]$ and $[HH^1(A),HH_1(A)^*]$.}
	\label{table3}
	\end{table}
	
		(4) Bracket between $Z(A)$ and $Alt_A(DA)$:
		\begin{itemize}
			\item Let  $\psi_{\{e_j, p'\}}, \psi_{\{p', p'\}}$ and $\phi_{\{p', q'\}}$ be as in  Theorem \ref{basisAlt}.  We identify $(e_i,p)$ with  $p \in Z(A)$. Then we have  
			\newline (1) \begin{eqnarray}
				\nonumber \left[ p, \psi_{\{e_j, p'\}}\right](q^*) & = &	-p\psi_{\{e_j, p'\}}(q^*) =-p\sum_{ \{ r|(q,r)\in\overline{(e_j,p')}\}}r\\
				\nonumber& = &-\sum_{\{r|(q,r)\in\overline{(e_j,p')}\}}pr=\left\lbrace
					\begin{array}{cl}
						-p& \text{if }  q=p \text{ and } e_i=e_j,\\
						0&  \text{otherwise.}
					\end{array}\right.
				\end{eqnarray} 
				Hence, 
				\[\left[ p, \psi_{\{e_j, p'\}}\right]=\left\lbrace
				\begin{array}{cl}
				-(p^*,p) & \text{if }  p=p',\\
				0&  \text{otherwise.}
				\end{array}
				\right.\]
				
				(2)  $[p,\psi_{\{p',p'\}}]=0$ because  $-pp'=0$ for any $p \in Z(A)\setminus\{1_A\}$.
				
				(3)  $[p,\phi_{\{p',q'\}}]=0$ because $p'\neq pq$ and $q'\neq pq$ for any $q\in A$.

			\item Let $\psi$ be an element in $Alt_A(DA)$, since $\sum_{e_i\in Q_0}e_i=1_A$,  we have \[[1_\Lambda,\psi]=-1_\Lambda\psi=-\psi.\]
		\end{itemize}
		
	(5) Bracket between $HH^1(A)$ and $Alt_A(DA)$: Let $f_{(a,q)}\in HH^1(A)$ and $\psi\in Alt_A(DA)$:
		\begin{eqnarray}
		\nonumber[f_{(a,q)},\psi](r^*)& = & (f_{(a,q)}\circ\psi) (r^*)+(\psi\circ D(f_{(a,q)}))(r^*)\\    
		\nonumber& = &f_{(a,q)}(\psi (r^*))+\psi (D(f_{(a,q)})(r^*))\\
		\nonumber& = &\psi (r^*)^{(a,q)}+\psi (r^*\circ f_{(a,q)}).	
		\end{eqnarray}
		If we write $\psi=\sum \lambda_{(p',q')}(p'^*\mapsto q')$ where $\psi(p'^*)=\lambda_{(p',q')} q'$ and $\lambda_{(p',q')}\in K$, then
		\[((p'^*\mapsto q')(r^*))^{(a,q)}=\left\lbrace \begin{array}{ll}
		q'^{(a,q)} & \text{if } r=p',\\
		0 & \text{otherwise,}
		\end{array}\right.\]
		and
		\[(p'^*\mapsto q')(r^*\circ f_{(a,q)})=\left\lbrace \begin{array}{ll}
		q' & \text{if } r=p'^{(a,q)},\\
		0 & \text{otherwise.}
		\end{array}\right. 
		\]
		Using Theorems  \ref{HH^1} and \ref{basisAlt} we complete the calculations and record the results in  Table \ref{table4}.
		
		(6) Bracket between $H^1(A,DA)$ and $Alt_A(DA)$:
		Let $g\in H^1(A,DA)$ and $\psi\in Alt_A(DA)$, we recall that
		\[[g,\psi]=g\circ\psi -D(\psi\circ g)-\psi\circ g,\]
		with $\psi\circ g \in H^1(A,DA)$  and $g\circ\psi -D(\psi\circ g ) \in Hom_{A\text{-}A}(DA,DA)\cong Z(A)$.  Similarly to the above, we write $g_{(a,q^*)}$ and $(p'^*\mapsto q')$. Now  using  Theorems \ref{basisHH_1} and \ref{basisAlt} we can calculate the brackets in Table \ref{table4}.	

\begin{table}[H]
	\resizebox{15.5cm}{!}{
		\begin{tabular}{|c|c|c|>{\columncolor{gray!20}}c| >{\columncolor{gray!20}}c|}
			\cmidrule{1-5}
			\multicolumn{2}{|c|}{\multirow{3}{1cm}{$[-,-]$}} & \multicolumn{3}{c|}{$Alt_{A}(DA)$}\\ 
			\cmidrule{3-5}	
			
			\multicolumn{2}{|c|} {}&$\phi_{\{p',q'\}}$& \cellcolor{gray!20} $\psi_{\{p',p'\}}$ if ${\rm char }K=2$ & \cellcolor{gray!20} $\psi_{\{e_j,p'\}}$ if ${\rm char }  K=2$ \\ 
			\cmidrule{1-5}
			
			\multirow{3}{.8cm}{$Z(A)$}& $(e_i,p)$& 0 & 0 &
			$\begin{array}{cl}
			-\psi_{\{p,p\}} &\text{if }  p=p',\\
			0 & \text{otherwise.}
			\end{array}$\\
			\cmidrule{2-5}
			
			& $\sum_{e\in Q_0}(e,e)$& $-\phi_{\{p',q'\}}$ & $-\psi_{\{p',p'\}}$ & $-\psi_{\{e_j,p'\}}$ \\
			\cmidrule{1-5}
			
			\multirow{7}{1.3cm}{$HH^1(A)$} & $(a,q) \mbox{ shortcut}$& 0 & 0 & 0 \\
			\cmidrule{2-5}
			
			& $(a,q)$ deviation via $a$& 0 & 0 & 0\\
			\cmidrule{2-5}
			
			& $(a,a)$ $a$ in fundamental cycle &
			$\begin{array}{cl}
			\phi_{\{p',q'\}}& \text{if } a \text{ appears in } p' \text{ or } q', \\
			0 & \text{otherwise.}
			\end{array}$ &
			0 &
			$\begin{array}{cl}
			\psi_{\{e_j,p'\}} & \text{if } a \text{ appears in } p', \\
			0 & \text{otherwise,}
			\end{array}$  \\
			\cmidrule{2-5}
			
			\rowcolor{gray!20}
			\cellcolor{white} &  $(a,e_i)$ if ${\rm char }  K=2$& 0 &
			$\begin{array}{cl}
			\psi_{\{e_i,a\}} &  \text{if } a=p', \\
			0 & \text{otherwise.}
			\end{array}$ & 0 \\
			\cmidrule{1-5}
			
			\multirow{7}{1.4cm}{$HH_1(A)^*$} & $(a, b^*)-(b, a^*)$&
			$\begin{array}{cl}
			2(a,a) &\text{if } a=p' \text{ and } b=q',\\
			-2(b,b) & \text{if } a=q'\text{ and } 	b=p',\\
			0& \text{otherwise.}
			\end{array}$ &
			0& 0\\
			\cmidrule{2-5}
			
			& $(a, e_i^*)$& 0 &
			$\begin{array}{cl}
			(e_i, a)&  \text{if } a=p', \\
			0 &\text{otherwise.}
			\end{array}$
			& 
			$\begin{array}{cl}
			(e_i,e_i)-(a, a)& \text{if } a=p' \,\, ( \& \,\, j=i),\\
			0 & \text{otherwise.}
			\end{array}$\\
			\cmidrule{2-5}
			
			\rowcolor{gray!20}
			\cellcolor{white}& $(a, a^*)$ if ${\rm char }  K=2$& 0 &
			$\begin{array}{cl}
			-(a, a)& \text{if } a=p',\\
			0 & \text{otherwise.}
			\end{array}$
			&
			$\begin{array}{cl}
			-(a, e_j)& \text{if } a=p',\\
			0 & \text{otherwise.}
			\end{array}$\\
			\cmidrule{1-5}
		\end{tabular}
	}
\caption{$[-,Alt_A(DA)]$.}
\label{table4}
\end{table}

\section{Nilpotent and solvable Lie algebras}

In this section we show that as Lie algebras the first Hochschild cohomology of gentle and Brauer graph algebras with multiplicity one are almost always solvable. A similar result for monomial algebras $KQ/I$ where $Q$ contains no cycle was obtained in  \cite[Theorem 4.23]{Str2} for $K$ of characteristic zero.

	\begin{theorem}\label{thm:nilpotent}
		Let $A$ be a gentle algebra. Then $HH^1(TA)$ is  nilpotent if and only if	
		  all of the following conditions hold:
		\begin{enumerate}
			\item $Z(A) = K$,
			\item $HH_1(A)^* = 0$,
			\item $Alt_A(DA) = 0$,
			\item for all $(p,q)$ in the basis of $HH^1(A)$,  we have $p = q$. 
		\end{enumerate}
	\end{theorem}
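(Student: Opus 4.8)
The plan is to prove both implications through a uniform analysis of the adjoint operators on $HH^1(TA)$, invoking Engel's theorem: a finite-dimensional Lie algebra is nilpotent if and only if $\operatorname{ad}_x$ is nilpotent for every $x$. Thus for the ``only if'' direction (by contrapositive) it suffices, whenever one of (1)--(4) fails, to exhibit a single $x$ for which $\operatorname{ad}_x$ has a nonzero eigenvalue, while for the ``if'' direction I will show the whole bracket collapses.

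The preliminary step I would carry out is a purely combinatorial lemma: \emph{in a finite-dimensional gentle algebra every path in $\cB$ uses each arrow at most once}. This follows from the gentle hypothesis, since each arrow has at most one admissible successor, so a repeated arrow in a $\cB$-path forces the path to be eventually periodic; finiteness of $\cB$ then forces the wrap-around composition to lie in $I$, contradicting the periodicity. The point of this lemma is that every structure constant of the form $p^{(a,a)} = \val\text{-type multiplicity}\cdot p$ appearing in Tables~\ref{table2}, \ref{table3}, \ref{table4} carries multiplicity exactly $1$; consequently all eigenvalues computed below are $\pm 1$, hence nonzero \emph{in every characteristic}. For condition (1) I note one can alternatively use the valency-two condition at $s(p)$ directly to see the first arrow of $p$ occurs once. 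The two degenerate cases $A=K$ and $A$ a single loop are read off directly from the explicit computations in Section~\ref{tpc}.

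For the sufficiency direction I assume (1)--(4). By (1)--(3) the decomposition collapses to $HH^1(TA) = Z(A)\oplus HH^1(A) = K\cdot 1_A \oplus HH^1(A)$. By Table~\ref{table2} (equivalently Strametz's consequence (2)) $1_A$ brackets trivially with $HH^1(A)$, so $K\cdot 1_A$ is central. Condition (4) forces the basis of $HH^1(A)$ to consist only of the diagonal classes $(a,a)$ from fundamental cycles (Theorem~\ref{HH^1}(3)), and the bracket formula gives $[(a,a),(a',a')] = (a',a'^{(a,a)}) - (a,a^{(a',a')}) = 0$ since a single arrow $a'$ contains $a$ only when $a=a'$. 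Hence $HH^1(A)$ is abelian, so $HH^1(TA)$ is abelian and a fortiori nilpotent.

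For the necessity direction I treat the four possible failures. The key observation, visible in the bottom rows of Tables~\ref{table3} and \ref{table4} (and from Strametz's consequences (3)--(4) with $z=1_A$), is that $1_A\in Z(A)$ acts as $+\operatorname{id}$ on $HH_1(A)^*$ and as $-\operatorname{id}$ on $Alt_A(DA)$; so if (2) or (3) fails, $\operatorname{ad}_{1_A}$ has eigenvalue $\pm 1$ and is not nilpotent. If (1) fails there is a central cyclic class $(e_i,p)$ with $\val(s(p))=2$ (Lemma~\ref{basisZ(A)}); writing $a_1$ for the first arrow of $p$ (multiplicity one), Table~\ref{table2} gives $\operatorname{ad}_{(a_1,a_1)}(e_i,p)=(e_i,p)$. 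If (4) fails, the offending basis class of $HH^1(A)$ is a shortcut, a deviation $q=q_2 a q_1$, or (in characteristic $2$) a loop class $(a,s(a))$; bracketing with $(b,b)$ where $b=a$ for a shortcut or loop and $b$ is an arrow of $q$ distinct from $a$ for a deviation, the bracket formula together with the multiplicity-one lemma yields $\operatorname{ad}_{(b,b)}(a,q)=\pm(a,q)$. In every case $\operatorname{ad}_x$ has a nonzero eigenvalue, so $HH^1(TA)$ is not nilpotent. The main obstacle is precisely this last direction: a priori the scaling factors are arrow multiplicities that might vanish modulo the characteristic, and it is the combinatorial lemma forcing them to equal $1$ that makes the eigenvalues $\pm 1$ and the whole argument characteristic-independent; one also checks, automatically, that the classes $(b,b)$ and $(a_1,a_1)$ are genuine nonzero elements of $HH^1(A)$ since the displayed brackets are nonzero and descend to cohomology.
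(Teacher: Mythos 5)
Your proof is correct and follows essentially the same route as the paper: sufficiency by showing that conditions (1)--(4) collapse $HH^1(TA)$ to the abelian algebra $K\oplus HH^1(A)$ spanned by fundamental-cycle classes, and necessity by producing, for each failed condition, an eigenvector of some $\operatorname{ad}_x$ with eigenvalue $\pm 1$ (the paper likewise uses $\operatorname{ad}_{1_A}$ on $HH_1(A)^*$ and $Alt_A(DA)$, and brackets central cycles, shortcuts and deviations against classes $(b,b)$). The only deviations are cosmetic: you state explicitly the multiplicity-one lemma for arrows in $\cB$-paths that the paper uses tacitly, you bracket a shortcut $(a,q)$ with $(a,a)$ itself rather than with a chosen fundamental-cycle arrow, and you handle the characteristic-$2$ loop case by a direct bracket where the paper reduces it to $HH_1(A)^*\neq 0$.
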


\begin{proof}
	Suppose that the four conditions are satisfied.
	Then $HH^1(TA)=K\oplus HH^1(A)$ and $HH^1(A)$ has a basis composed of  elements $(a,a)$ with $a$ determined by fundamental cycles. But we have already shown that the bracket of two such elements is zero, as well as their action on $Z(A)=K$.
	
	For the converse, we will prove that if one of the four conditions is not satisfied, then $HH^1(TA))$ is not nilpotent. We will show in each case how the nilpotency of $HH^1(TA)$ fails. 
	
	For	(1), if $p \in Z(A)\setminus \{1_A\}$, then there exists an arrow $a$ in the cycle $p$ such that $[(s(p),p),(a,a)]$ $=-(s(p),p)$.
	For (2), we have $[1_A, x]=x$ for any $x\in HH_1(A)^*$ and for (3), we have $[1_A, x]=-x$ for any $x\in Alt_A(DA)$. Condition (4) holds if and only if $HH^1(A)$ is given by fundamental cycles. So suppose that this is not the case. If $a$ is a shortcut for $q$, then there exists at least one fundamental cycle containing an arrow that we will call $b$. Furthermore, in this case $aq$ is not a directed cycle. Thus $[(a,q),(b,b)]=\pm(a,q)$. Now suppose that there is a deviation $q$ via $a$ where $q=q_2aq_1$.
	Let $b$ be an arrow in one of the cycles $q_1$ or $q_2$. Then $[(a,q),(b,b)]=-(a,q)$.
	Finally,  if ${\rm char }  K=2$ and there exists a loop $a$, then $(a,s(a))$ is an element in the basis of $HH^1(A)$ and $(a,s(a)^*)$ is in the basis of $HH_1(A)^*$, therefore $HH_1(A)^*\neq 0$ and the result follows. 
\end{proof}

\begin{corollary}
	Let $A=KQ/I$ be a gentle algebra with basis of paths $\cB$ and such that $Q$ contains no oriented cycles. 
	  Then $HH^1(TA)$ is nilpotent if and only if there is no arrow $a$ parallel to an element in $p \in \cB$ with $p \neq a$. 

\end{corollary}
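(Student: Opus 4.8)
The plan is to deduce the corollary directly from Theorem~\ref{thm:nilpotent}, whose four conditions collapse dramatically once $Q$ has no oriented cycles. So the first step I would take is to show that acyclicity forces conditions (1)--(3) to hold for free. By Lemma~\ref{basisZ(A)} every non-identity basis element of $Z(A)$ is an oriented cycle $p=a_n\ldots a_1\in\cB$; by Theorem~\ref{basisHH_1} every basis element of $HH_1(A)^*$ is built either from a loop or from a pair $a,b$ with $ab,ba\in I$, i.e.\ from an oriented $2$-cycle; and by Theorem~\ref{basisAlt} every basis element of $Alt_A(DA)$ is supported on oriented cycles $pq,qp$ or on a closed path $p$. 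Since $Q$ is acyclic none of these configurations can appear, whence $Z(A)=K$, $HH_1(A)^*=0$ and $Alt_A(DA)=0$. Therefore $HH^1(TA)$ is nilpotent if and only if condition (4) holds, namely that every basis element $(p,q)$ of $HH^1(A)$ from Theorem~\ref{HH^1} satisfies $p=q$.

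Next I would determine which of the basis elements of Theorem~\ref{HH^1} can actually occur in the acyclic setting. Case (4) (loops) is excluded immediately. The key observation is that deviations (case (2)) cannot occur either: for a deviation $(a,q)$ with $q=q_2aq_1$, parallelism gives $s(q_1)=s(q)=s(a)$ while $a$ directly follows $q_1$, so $t(q_1)=s(a)$; thus $s(q_1)=t(q_1)$ and $q_1$ would be a nontrivial oriented cycle, contradicting acyclicity. The same computation shows that whenever an arrow $a$ is parallel to a basis path $p$ in which $a$ occurs, necessarily $p=a$. Consequently the fundamental-cycle elements $(a,a)$ of case (3) always have $p=q$, and the only basis elements with $p\neq q$ are the shortcuts of case (1). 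Hence condition (4) is equivalent to the non-existence of shortcuts.

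It remains to translate ``no shortcuts'' into the stated parallel-path condition. By definition a shortcut is an arrow $a$ not occurring in any relation of $I$ and parallel to some $p\in\cB$ in which $a$ does not appear; by the previous paragraph the last clause is automatic once $a\neq p$. For the direction ``no parallel path $\Rightarrow$ nilpotent'' the argument is clean: if no arrow is parallel to a distinct element of $\cB$, then a fortiori there are no shortcuts (and, as shown, no deviations), so (4) holds and $HH^1(TA)$ is nilpotent. For the converse I would argue contrapositively, starting from an arrow $a$ parallel to some $p\in\cB$ with $p\neq a$ and producing a genuine shortcut, hence a nonzero class of the form $(a,p)$ in $HH^1(A)$ violating (4); here I would invoke Proposition~\ref{lc}, which characterises exactly when a pair $(a,p)$ fails to be a $1$-cocycle.

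The step I expect to be the main obstacle is precisely this last matching. A shortcut requires the parallel arrow to lie in \emph{no} relation, whereas the stated condition quantifies over all arrows; so the delicate point is to verify that an arrow parallel to a distinct basis path cannot sit inside a relation without forcing a shortcut to appear somewhere. This is where Proposition~\ref{lc} together with the gentle axioms must be used carefully, since a pair $(a,p)$ with $a$ appearing in a relation is never a cocycle on its own, and one must rule out that such pairs silently contribute to, or obstruct, the identification of the remaining basis in $HH^1(A)$.
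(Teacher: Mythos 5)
Your reduction is exactly the intended one: the paper states this corollary without proof as a direct consequence of Theorem~\ref{thm:nilpotent}, and your first two paragraphs correctly supply the argument left implicit there --- acyclicity kills the non-identity part of $Z(A)$ (Lemma~\ref{basisZ(A)}), all of $HH_1(A)^*$ and $Alt_A(DA)$ (Theorems~\ref{basisHH_1} and \ref{basisAlt}), and also the deviations and loops among the basis elements of Theorem~\ref{HH^1}, so that nilpotency of $HH^1(TA)$ becomes equivalent to the absence of shortcuts. Up to that point your proof is complete and correct.

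The obstacle you flag at the end, however, is not a technicality you failed to resolve: the implication ``some arrow $a$ is parallel to some $p\in\cB$ with $p\neq a$ $\Rightarrow$ $HH^1(TA)$ not nilpotent'' is false as stated, so no argument will close that gap. Take $Q$ with vertices $1,2,3,4$, arrows $a\colon 1\to 2$, $b\colon 1\to 3$, $c\colon 3\to 2$, $d\colon 2\to 4$, and $I=\langle da\rangle$. This algebra is gentle and $Q$ is acyclic, and $a$ is parallel to $cb\in\cB$ with $cb\neq a$. But $a$ lies in the relation $da$, so $(a,cb)$ is not a shortcut; indeed $d^1(a,cb)=(da,dcb)\neq 0$, and by Proposition~\ref{lc} (whose cancellation case requires a nontrivial cycle $r$) the pair $(a,cb)$ contributes nothing to $HH^1(A)$. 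There is no other shortcut and no deviation, so $HH^1(A)$ is spanned by a single fundamental-cycle class $(x,x)$, all four conditions of Theorem~\ref{thm:nilpotent} hold, and $HH^1(TA)\cong K\oplus K(x,x)$ is abelian, hence nilpotent --- contradicting the literal statement of the corollary. What your argument actually proves, and what the statement should say, is: $HH^1(TA)$ is nilpotent if and only if there is no arrow $a$ \emph{lying in no relation of $I$} that is parallel to some $p\in\cB$ with $p\neq a$, i.e.\ if and only if no shortcut exists. With that qualifier inserted your proof closes cleanly: in the acyclic case such an $a$ cannot appear in $p$ (your cycle argument), so it is precisely a shortcut in the sense of Theorem~\ref{HH^1}(1), and condition (4) of Theorem~\ref{thm:nilpotent} fails exactly when one exists.
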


The next result  is a direct consequence of Theorem \ref{thm:nilpotent} and Tables 1 and 2.
\begin{corollary}
	Let $A$ be a gentle algebra. If $HH^1(TA)$ is nilpotent then $HH^1(TA)$ is an abelian Lie algebra.
\end{corollary}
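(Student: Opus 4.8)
The plan is to deduce the statement directly from the nilpotency criterion of Theorem~\ref{thm:nilpotent} together with the bracket tables. First I would assume that $HH^1(TA)$ is nilpotent and invoke Theorem~\ref{thm:nilpotent}, which forces the four conditions to hold simultaneously: $Z(A)=K$, $HH_1(A)^*=0$, $Alt_A(DA)=0$, and every basis element $(p,q)$ of $HH^1(A)$ satisfies $p=q$. Feeding these into the vector space decomposition $HH^1(TA)\cong Z(A)\oplus HH_1(A)^*\oplus HH^1(A)\oplus Alt_A(DA)$ collapses it to $HH^1(TA)=K\oplus HH^1(A)$, where $K=Z(A)$ is spanned by $\sum_{e\in Q_0}(e,e)$ and $HH^1(A)$ has a basis consisting solely of the fundamental-cycle elements $(a,a)$.

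It then remains to check that every bracket between basis elements vanishes, and there are three types of pairs to consider. Brackets inside $Z(A)=K$ are trivially zero, since this summand is one-dimensional. Brackets between the generator $\sum_{e\in Q_0}(e,e)$ of $Z(A)$ and any element of $HH^1(A)$ vanish by the bottom row of Table~\ref{table2}, which records $\left[\sum_{e\in Q_0}(e,e),(a',q')\right]=0$. Finally, brackets inside $HH^1(A)$ reduce to brackets of two fundamental-cycle elements $(a,a)$ and $(a',a')$; reading off Table~\ref{table1}, the entry in the fundamental-cycle row against the fundamental-cycle column is zero (equivalently, this bracket type does not appear in the list of nonzero brackets preceding the table). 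Hence all brackets vanish and $HH^1(TA)$ is abelian.

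The only substantive point, and hence the only possible obstacle, is the last one: confirming that the fundamental-cycle elements commute among themselves. This is precisely the observation already used in the proof of Theorem~\ref{thm:nilpotent}, where it is noted that the bracket of two such elements is zero, so no new computation is required and one simply cites the relevant cell of Table~\ref{table1}. Everything else is bookkeeping on the collapsed decomposition, so the corollary follows at once.
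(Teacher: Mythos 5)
Your proposal is correct and follows exactly the route the paper takes: the paper states the corollary as a direct consequence of Theorem~\ref{thm:nilpotent} together with Tables~\ref{table1} and~\ref{table2}, which is precisely the argument you spell out (collapse the decomposition to $K\oplus HH^1(A)$ with only fundamental-cycle basis elements, then read off that all remaining brackets vanish). Your write-up simply makes explicit the bookkeeping the paper leaves implicit.
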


If the graph of $A$ is a tree, then $TA$ is of finite representation type and $HH^1(TA)$ is a nilpotent abelian Lie algebra. However the converse is not true. Namely if  $HH^1(TA)$ is a nilpotent or even an abelian Lie algebra  then $TA$  is  not necessarily of finite representation type:
\begin{Ex}{\rm 
	 Let $A$ be the gentle algebra with ideal $I=\langle ba,db \rangle$ given by the following quiver and ribbon graph.
	 \vspace{-0.2cm}
	\begin{figure}[H]
		\centering
		\tikzstyle{help lines}+=[dashed]
			\begin{tikzpicture}[auto, thick, scale=0.8]+=[dashed]
			{\footnotesize
				\clip (-0.4,-0.5) rectangle (14.65,1.5);
				\node(1) at (0:0){$e_1$};
				\node(2) at ($(1)+(35:2)$){$e_2$};
				\node(3) at ($(2)+(-35:2)$){$e_3$};
				\node(4) at ($(3)+(0:2)$){$e_4$};
				\draw[->](1) to node   {$a$} (2);
				\draw[->](2) to node   {$b$} (3);
				\draw[->](3) to node   {$d$} (1);
				\draw[->](3) to node   {$c$} (4);		
				\draw[style=help lines] (2) +(-45:.5cm) arc (-45:-140:.5cm);
				\draw[style=help lines] (3) +(145:.7cm) arc (145:175:.7cm);
				\node[cg] (1x) at ($(4)+(0:1.5)+(35:1)$){$e_1$};
				\node[cg] (2x) at ($(1x)+(0:1.8)$){$ad$};
				\node[cg] (3x) at ($(2x)+(0:3)$){$cb$};
				\node[cg] (4x) at ($(3x)+(0:1.8)$){$e_4$};
				
				\draw[-] (1x) to node [below, pos=0.5] {$e_1$}(2x);
				\draw[-] (2x) to [out=50, in=230,looseness=1.5] node [above, pos=0.3]{$e_2$}(3x);
				\draw[draw=white,double=black] (2x) to [out=-50, in=130,looseness=1.5] node [below, pos=0.3] {$e_3$}(3x);
				\draw[-] (3x) to  node [below, pos=0.5] {$e_4$}(4x);}	
			\end{tikzpicture}
	\end{figure}
	\vspace{-0.3cm}
	We see by Theorem~\ref{thm:nilpotent} that  $HH^1(TA)\cong K \oplus K(b,b)$ is nilpotent while $TA$ is not of finite representation type. This follows from the fact that  the Brauer graph of $TA$ is not a tree.}
\end{Ex}

\begin{theorem}\label{KroneckerTh}
Let $A$ be a gentle algebra $K$-algebra, and let $B$ be the Brauer graph algebra isomorphic to $TA$. Suppose ${\rm char }  K\neq 2$. Then
		\begin{enumerate}
			\item $HH^1(A)$ is solvable if and only if $A$ is not the Kronecker algebra. 
			\item $HH^1(B)$ is solvable if and only if $B$ is not isomorphic to the trivial extension of  the Kronecker algebra.  
		\end{enumerate}
\end{theorem}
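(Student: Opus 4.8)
The plan is to read off the Lie structure from the bracket Tables~\ref{table1}--\ref{table4} together with the $\mathbb{Z}/2$-graded decomposition recalled above: writing $\cL = HH^1(A)\oplus Z(A)$ and $\cP = HH_1(A)^*\oplus Alt_A(DA)$, we have $[\cL,\cL]\subseteq\cL$, $[\cL,\cP]\subseteq\cP$ and $[\cP,\cP]\subseteq\cL$, and each of $Z(A)$, $HH_1(A)^*$, $Alt_A(DA)$ is abelian.

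For part (1), I first treat the Kronecker algebra directly. Here $I=0$, so $R=\emptyset$, and Proposition~\ref{prop::str} gives $\dim HH^1(A)=3$ with basis $(a,a),(a,b),(b,a)$, subject to the single relation $(b,b)=-(a,a)$ coming from $d^0(e_2,e_2)=(a,a)+(b,b)$. Using $[(p,q),(p',q')]=(p',(q')^{(p,q)})-(p,q^{(p',q')})$ one finds $[(a,a),(a,b)]=-(a,b)$, $[(a,a),(b,a)]=(b,a)$ and $[(a,b),(b,a)]=-2(a,a)$, so that $h=-2(a,a)$, $e=(a,b)$, $f=(b,a)$ satisfy the defining relations of $\mathfrak{sl}(2,K)$; since $\mathrm{char}\,K\neq2$ this is simple, hence not solvable. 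If $A$ is not Kronecker, decompose $HH^1(A)=C\oplus W$, with $C$ spanned by the fundamental-cycle classes $(a,a)$ and $W$ by shortcuts and deviations. Table~\ref{table1} yields $[C,C]=0$, $[C,W]\subseteq W$ and $[W,W]=0$, the last because the bracket of two shortcuts is nonzero only for the Kronecker quiver (a parallel pair of arrows neither of which lies in a relation forces, by the gentle axioms and connectedness of $Q$, the whole quiver to be $e_1\rightrightarrows e_2$). Thus $W$ is an abelian ideal and $HH^1(A)/W\cong C$ is abelian, so $HH^1(A)$ is solvable of derived length at most $2$.

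For part (2), suppose first $B\cong T(\text{Kronecker})$. Computing $HH^1(B)$ from the Kronecker presentation, the image of $HH^1(\text{Kronecker})\cong\mathfrak{sl}(2,K)$ is a Lie subalgebra of $HH^1(B)$ by \cite{Str}; as any subalgebra of a solvable Lie algebra is solvable, $HH^1(B)$ cannot be solvable. Conversely, assume $B\not\cong T(\text{Kronecker})$. Then the given gentle algebra $A$ is not Kronecker, so by part (1) $HH^1(A)$ is solvable; since $Z(A)$ is an abelian ideal of $\cL$ with $[Z(A),HH^1(A)]\subseteq Z(A)$ (Table~\ref{table2}) and $\cL/Z(A)\cong HH^1(A)$, the algebra $\cL$ is solvable. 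It remains to show $[\cP,\cP]=0$. Because $HH_1(A)^*$ and $Alt_A(DA)$ are abelian, $[\cP,\cP]=[HH_1(A)^*,Alt_A(DA)]$, and for $\mathrm{char}\,K\neq2$ Table~\ref{table4} shows this bracket is nonzero only through $[(a,b^*)-(b,a^*),\phi_{\{a,b\}}]=\pm2(a,a)$. Such a term requires simultaneously $ab,ba\in I$ (so that $(a,b^*)-(b,a^*)\in HH_1(A)^*$) and $\val(s(a))=\val(s(b))=2$ (so that $\phi_{\{a,b\}}\in Alt_A(DA)$); but an oriented $2$-cycle $a,b$ with $ab,ba\in I$ whose two vertices have valency $2$ admits no further incident arrows, so by connectedness it is all of $Q$. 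Its (unmarked) ribbon graph is the bigon, which is exactly the Brauer graph of the Kronecker algebra, forcing $B\cong T(\text{Kronecker})$ and contradicting our assumption. Hence $[\cP,\cP]=0$, so $\cP$ is an abelian ideal of $HH^1(B)$ with solvable quotient $\cL$, and therefore $HH^1(B)$ is solvable.

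I expect the converse in part (2) to be the crux: one must show $\mathfrak{sl}(2,K)$ is the \emph{only} obstruction to solvability and determine precisely when it appears. This is exactly the computation $[\cP,\cP]=[HH_1(A)^*,Alt_A(DA)]$ combined with the geometric observation that the sole configuration producing a nonzero such bracket is the valency-$2$ oriented $2$-cycle, whose Brauer graph is the bigon underlying $T(\text{Kronecker})$. The degenerate cases where $A$ is a point or a single loop are not covered by the tables and are checked directly as in Section~\ref{tpc}; each yields a small solvable Lie algebra containing no copy of $\mathfrak{sl}(2,K)$.
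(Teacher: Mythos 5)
Your proposal is correct and follows essentially the same route as the paper: identify $HH^1(\text{Kronecker})\cong\mathfrak{sl}(2,K)$ directly, and otherwise read off from Tables~\ref{table1}--\ref{table4} that the derived series terminates, with the key observation that the only configurations producing a fundamental-cycle element $(a,a)$ in the derived subalgebra (a pair of parallel unrelated arrows, or the bracket $[HH_1(A)^*,Alt_A(DA)]$ on a valency-two oriented $2$-cycle) force $A$ to be the Kronecker or the radical-square-zero Nakayama algebra, i.e.\ $TA\cong T(\text{Kronecker})$. Your packaging via the abelian ideals $W\subseteq HH^1(A)$ and $\cP\subseteq HH^1(TA)$ is only a cosmetic reorganization of the paper's direct computation that $(HH^1(A))^{(2)}=0$ and $(HH^1(TA))^{(2)}=0$.
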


We note that if $A$ is the Kronecker algebra then $TA \cong TB$ where $B$ is the radical square zero Nakayama algebra with two simple modules. We recall the quivers of both these algebras
\vspace{-0.2cm}
	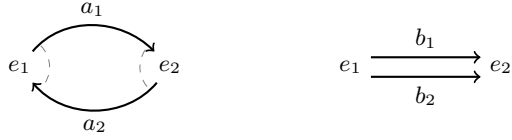
\begin{figure}[H]
		\centering
		\tikzstyle{help lines}+=[dashed]
		\begin{tikzpicture}[auto, thick, scale=0.8]+=[dashed]
		{\footnotesize
			\node(1) at (0:0){$e_1$};
			\node(1a) at ($(1)+(0:2.5)$){$e_2$};
			\draw[->] (1) to [out=50, in=135, looseness=1] node   {$a_1$} (1a);
			\draw[->] (1a) to [out=230, in=310, looseness=1]  node  {$a_2$} (1);
			\draw[style=help lines] (1) +(45:.5cm) arc (45:-45:.5cm);
			\draw[style=help lines] (1a) +(145:.5cm) arc (145:230:.5cm);
			\node(2) at ($(1a)+(0:3)$){$e_1$};
			\node (2a) at ($(2)+(0:2.5)$){$e_2$};
			\draw[transform canvas={shift={(-90:0.13)}},->] (2) to node [below] {$b_2$} (2a);
			\draw[transform canvas={shift={(90:0.13)}},->] (2) to node [above] {$b_1$} (2a);
			}
		\end{tikzpicture}
		\caption{The quiver of the radical squared zero Nakayama algebra with relations $a_1a_2=0=a_2a_1$ is on the left and the quiver of the Kronecker algebra on the right.}
	\end{figure}
\vspace{-0.2cm}
\begin{proof}
(1) If $A$ is the Kronecker algebra, then $HH^1(A)= K(b_1,b_2)\oplus K(b_2,b_1)\oplus K(b_1,b_1)$ with  the following bracket table
	\begin{table}[H]
		\resizebox{5.5cm}{!}{
			\begin{tabular}{|c||c|c|c|c|}
				\hline 
				$[-,-]$& $(b_1,b_2)$ & $(b_2,b_1)$ &  $(b_1,b_1)$\\ 
				\hline 
				\hline
				$(b_1,b_2)$ & $0$ & $-2(b_1,b_1)$ & $(b_1,b_2)$ \\ 
				\hline 
				$(b_2,b_1)$ & $2(b_1,b_1)$ & $0$ & $-(b_2,b_1)$ \\ 
				\hline 
				$(b_1,b_1)$ & $-(b_1,b_2)$ & $(b_2,b_1)$ & $0$\\ 
				\hline 
			\end{tabular}
		}
		\caption{Bracket for $HH^1(A)$ where $A$ is  the Kronecker algebra.}\label{Kronecker}
	\end{table}
Hence, $(HH^1(A))^{(1)}=HH^1(A)$.  Now,  if $A$ is not the Kronecker algebra then no element $(a,a)$ with $a$ determined by a fundamental cycle appears in $(HH^1(A))^{(1)}$ (see Table \ref{table1}), so $(HH^1(A))^{(2)}=0$.
	
(2)  If $A$ is different from the algebras mentioned in the statement, that is $A$ is neither the Kronecker algebra nor the Nakayama algebra with 2 simple modules and radical square zero relations, then the elements $1_A$ and $(a,a)$ with $a$ determined by a fundamental cycle do not appear in $\left( HH^1(TA) \right)^{(1)}$. Thus $\left( HH^1(TA) \right)^{(2)}=0$. Finally, if $A$ is the Kronecker algebra then $(HH^1(TA))^{(1)}=HH^1(A)$ and hence it is  not solvable. 
\end{proof}

	
Let $A$ be the Kronecker algebra and $B$ the Nakayama algebra with $2$ simple modules and radical square zero. As we already know $TA\cong TB$.
In the next corollary, we show that if ${\rm char} K \neq 2$ then the Lie algebra structure of $TA$ is given by $\mathfrak{gl}(2,K)$. This is noteworthy since for the Nakayama algebra $HH^1(B)$ is solvable whereas for the Kronecker algebra $HH^1(A)$ is not solvable. 
	
	\begin{corollary}
		Let $A$ be a gentle algebra with ${\rm char }  K\neq 2$. Then $HH^1(TA)$ is not solvable if and only if   $HH^1(TA)\cong \mathfrak{gl}(2,K)$. This is the case precisely if $A$ is given by the Kronecker algebra or a Nakayama algebra with 2 simples and radical square zero.  
		
		Furthermore, if $A$  is the Kronecker algebra then $HH^1(A)\cong \mathfrak{sl}(2,K)$ and if $A$ is the Nakayama algebra with 2 simples and radical square zero, then $HH^1(A) =K$. 
	\end{corollary}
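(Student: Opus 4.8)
The plan is to reduce to the non-solvable case by Theorem~\ref{KroneckerTh}(2) and then compute $HH^1(TA)$ on the nose using the four-term decomposition $HH^1(TA)\cong Z(A)\oplus HH_1(A)^*\oplus HH^1(A)\oplus Alt_A(DA)$ together with the bracket tables. By Theorem~\ref{KroneckerTh}(2), $HH^1(TA)$ fails to be solvable precisely when $TA$ is isomorphic to the trivial extension of the Kronecker algebra. To see that the only gentle algebras $A$ with $TA\cong T(\text{Kronecker})$ are the Kronecker algebra and the radical-square-zero Nakayama algebra with two simples, I would invoke the correspondence of \cite{S} between gentle algebras with a fixed trivial extension and admissible cuts of the associated Brauer graph: the Brauer graph of $T(\text{Kronecker})$ consists of two edges joining two vertices, and a direct inspection of this small graph shows it admits, up to isomorphism, exactly the two admissible cuts producing these two algebras.

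Next I would compute $HH^1(TA)$ when $A$ is the Kronecker algebra. Since its path algebra is hereditary with $I=0$, Theorems~\ref{basisHH_1} and \ref{basisAlt} give $HH_1(A)^*=Alt_A(DA)=0$, as their basis elements all require nontrivial relations, and by Lemma~\ref{basisZ(A)} we have $Z(A)=K\cdot 1_A$ because $Q$ has no oriented cycle. Theorem~\ref{HH^1} then yields a three-dimensional $HH^1(A)$ with basis the two shortcuts $(b_1,b_2),(b_2,b_1)$ and the unique fundamental-cycle class $(b_1,b_1)$, whose brackets are recorded in Table~\ref{Kronecker}. Setting $e=(b_1,b_2)$, $f=(b_2,b_1)$ and $h=-2(b_1,b_1)$, which is legitimate since ${\rm char}\,K\neq 2$, one reads off $[h,e]=2e$, $[h,f]=-2f$ and $[e,f]=h$, so $HH^1(A)\cong\mathfrak{sl}(2,K)$. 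From the row $\sum_{e}(e,e)$ of Table~\ref{table2} the element $1_A$ brackets trivially with all of $HH^1(A)$, while the mixed tables involving $HH_1(A)^*$ and $Alt_A(DA)$ are vacuous; hence $1_A$ is central and $HH^1(TA)=K\cdot 1_A\oplus\mathfrak{sl}(2,K)\cong\mathfrak{gl}(2,K)$.

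For the Nakayama algebra $B$ with $I=\langle a_1a_2,a_2a_1\rangle$ I would compute $HH^1(B)$ directly from Theorem~\ref{HH^1}: every arrow occurs in a relation and every element of $\cB$ has length at most one, so there are neither shortcuts nor deviations, leaving only the single fundamental-cycle class $(a_1,a_1)$; thus $HH^1(B)=K$. Since $TB\cong TA$ as algebras, $HH^1(TB)\cong HH^1(TA)\cong\mathfrak{gl}(2,K)$, consistent with the previous paragraph, and this settles the ``Furthermore'' statement in both cases.

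Finally I assemble the equivalence. Table~\ref{Kronecker} shows $(HH^1(A))^{(1)}=HH^1(A)$ for the Kronecker algebra, so $\mathfrak{sl}(2,K)$ equals its own derived algebra and $\mathfrak{gl}(2,K)$ is not solvable; conversely, the first paragraph shows non-solvability forces $A$ to be one of the two listed algebras, each of which gives $HH^1(TA)\cong\mathfrak{gl}(2,K)$, yielding the stated ``if and only if''. The step I expect to be the main obstacle is the classification in the first paragraph, namely verifying via \cite{S} that no gentle algebra other than these two has trivial extension isomorphic to $T(\text{Kronecker})$; the remaining work is the bookkeeping of reading off the tables and exhibiting the explicit Chevalley basis.
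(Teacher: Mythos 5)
Your proposal is correct and follows essentially the same route as the paper: reduce to the Kronecker case via Theorem \ref{KroneckerTh}, read off from Table \ref{Kronecker} an explicit $\mathfrak{sl}(2,K)$-triple in $HH^1(A)$ (your choice $e=(b_1,b_2)$, $f=(b_2,b_1)$, $h=-2(b_1,b_1)$ satisfies the required relations, just as the paper's $(b_1,b_1)\mapsto h/2$ does), and observe that the remaining summands vanish while $Z(A)=K$ is central, giving $HH^1(TA)\cong K\oplus\mathfrak{sl}(2,K)\cong\mathfrak{gl}(2,K)$. The additional details you supply (the admissible-cut classification of gentle algebras with trivial extension $T(\mathrm{Kronecker})$, and the computation $HH^1(B)=K$ for the Nakayama algebra) are facts the paper establishes or asserts in the surrounding discussion rather than in the proof itself, so they amount to filling in the same argument more explicitly.
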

	
	\begin{proof}
		If $A$ is the Kronecker algebra, consider the isomorphism between  $HH^1(A)$ and $\mathfrak{sl}(2,K)$ given by $(b_1,b_1)\mapsto \frac{h}{2}$, $(b_1,b_2)\mapsto f$, and
		$(b_2,b_1)\mapsto e$.  
		Now, since $Z(A)=K$ we have that $HH^1(TA)\cong K\oplus \mathfrak{sl}(2,K)$.
	\end{proof}
	
    	If ${\rm char} K =2$, it follows from Table~\ref{Kronecker} that $HH^1(A)$ is solvable also  in the  case that $A$ is the Kronecker algebra.

\bibliography{biblio}

\begin{thebibliography}{10}

\bibitem{Amiot}
Claire Amiot.
\newblock The derived category of surface algebras: the case of the torus with
  one boundary component.
\newblock {\em Algebr. Represent. Theory}, 19(5):1059--1080, 2016.

\bibitem{AG}
Claire Amiot and Yvonne Grimeland.
\newblock Derived invariants for surface algebras.
\newblock {\em J. Pure Appl. Algebra}, 220(9):3133--3155, 2016.

\bibitem{ALS}
Dalia Artenstein, Marcelo Lanzilotta, and Andrea Solotar.
\newblock {Gerstenhaber structure on Hochschild cohomology of toupie algebras}.
\newblock {\em arXiv e-prints: 1803.10310}, March 2018.

\bibitem{ABCP}
Ibrahim Assem, Thomas Br\"{u}stle, Gabrielle Charbonneau-Jodoin, and Pierre-Guy
  Plamondon.
\newblock Gentle algebras arising from surface triangulations.
\newblock {\em Algebra Number Theory}, 4(2):201--229, 2010.

\bibitem{Cecotti}
Sergio Cecotti.
\newblock Categorical tinkertoys for {$ N=2$} gauge theories.
\newblock {\em Internat. J. Modern Phys. A}, 28(5-6):1330006, 124, 2013.

\bibitem{CMRS}
Claude Cibils, Eduardo Marcos, Mar\'\i a~Julia Redondo, and Andrea Solotar.
\newblock Cohomology of split algebras and of trivial extensions.
\newblock {\em Glasg. Math. J.}, 45(1):21--40, 2003.

\bibitem{CRS}
Claude Cibils, Mar\'\i a~Julia Redondo, and Manuel Saor{\'i}n.
\newblock The first cohomology group of the trivial extension of a monomial
  algebra.
\newblock {\em J. Algebra Appl.}, 3(2):143--159, 2004.

\bibitem{Cibils-Saorin}
Claude Cibils and Manuel Saor\'{i}n.
\newblock The first cohomology group of an algebra with coefficients in a
  bimodule.
\newblock {\em J. Algebra}, 237(1):121--141, 2001.

\bibitem{DRS}
Lucas David-Roesler and Ralf Schiffler.
\newblock Algebras from surfaces without punctures.
\newblock {\em J. Algebra}, 350:218--244, 2012.

\bibitem{ER}
Florian Eisele and Theo Raedschelders.
\newblock {On solvability of the first Hochschild cohomology of a
  finite-dimensional algebra}.
\newblock {\em arXiv e-prints: 1903.07380}, March 2019.

\bibitem{Gerstenhaber}
Murray Gerstenhaber.
\newblock The cohomology structure of an associative ring.
\newblock {\em Ann. of Math. (2)}, 78:267--288, 1963.

\bibitem{GJRS}
Buenos Aires Cyclic~Homology Group.
\newblock Cyclic homology of algebras with one generator.
\newblock {\em $K$-Theory}, 5(1):51--69, 1991.
\newblock Jorge A. Guccione, Juan Jos\'e Guccione, Mar\'\i a Julia Redondo,
  Andrea Solotar and Orlando E. Villamayor participated in this research.

\bibitem{HKK}
Fabian Haiden, Ludmil Katzarkov, and Maxim Kontsevich.
\newblock Flat surfaces and stability structures.
\newblock {\em Publ. Math. Inst. Hautes \'{E}tudes Sci.}, 126:247--318, 2017.

\bibitem{HT}
Thorsten Holm.
\newblock Hochschild cohomology rings of algebras {$k[X]/(f)$}.
\newblock {\em Beitr\"age Algebra Geom.}, 41(1):291--301, 2000.

\bibitem{Keller}
Bernhard Keller.
\newblock Hochschild cohomology and derived {P}icard groups.
\newblock {\em J. Pure Appl. Algebra}, 190(1-3):177--196, 2004.

\bibitem{Labardini}
Daniel Labardini-Fragoso.
\newblock Quivers with potentials associated to triangulated surfaces.
\newblock {\em Proc. Lond. Math. Soc. (3)}, 98(3):797--839, 2009.

\bibitem{Ladkani1}
Sefi Ladkani.
\newblock {Hochschild cohomology of gentle algebras}.
\newblock {\em arXiv e-prints: 1208.2230}, August 2012.

\bibitem{Ladkani2}
Sefi Ladkani.
\newblock {On the Hochschild cohomology of gentle algebras}.
\newblock {\em Private Communication}, 2016.

\bibitem{LP}
Yanki Lekili and Alexander Polishchuk.
\newblock {Derived equivalences of gentle algebras via Fukaya categories}.
\newblock {\em arXiv e-prints: 1801.06370}, January 2018.

\bibitem{MNPRS}
Joanna Meinel, Van~C. Nguyen, Bregje Pauwels, Mar\'{i}a~Julia Redondo, and
  Andrea Solotar.
\newblock {The Gerstenhaber structure on the Hochschild cohomology of a class
  of special biserial algebras}.
\newblock {\em arXiv e-prints: 1803.10909}, March 2018.

\bibitem{NVW}
Cris Negron, Yuri Volkov, and Sarah Witherspoon.
\newblock $a_{\infty}$-coderivations and the gerstenhaber bracket on hochschild
  cohomology.
\newblock {\em arXiv e-prints:1805.03167}, May 2018.

\bibitem{Negron-Witherspoon}
Cris Negron and Sarah Witherspoon.
\newblock An alternate approach to the {L}ie bracket on {H}ochschild
  cohomology.
\newblock {\em Homology Homotopy Appl.}, 18(1):265--285, 2016.

\bibitem{Negron-Witherspoon2}
Cris Negron and Sarah Witherspoon.
\newblock The {G}erstenhaber bracket as a {S}chouten bracket for polynomial
  rings extended by finite groups.
\newblock {\em Proc. Lond. Math. Soc. (3)}, 115(6):1149--1169, 2017.

\bibitem{OPS}
Sebastian Opper,  Pierre-Guy Plamondon, and Sibylle Schroll.
\newblock {A geometric model for the derived category of gentle algebras}.
\newblock {\em arXiv: 1801.09659}, January 2018.

\bibitem{PS}
Zygmunt Pogorzaly and Andrzej Skowro\'nski.
\newblock Self-injective biserial standard algebras.
\newblock {\em J. Algebra}, 138(2):491--504, 1991.

\bibitem{Redondo-Roman}
Mar\'{i}a~Julia Redondo and Lucrecia Rom\'{a}n.
\newblock Gerstenhaber algebra structure on the {H}ochschild cohomology of
  quadratic string algebras.
\newblock {\em Algebr. Represent. Theory}, 21(1):61--86, 2018.

\bibitem{Rickard}
Jeremy Rickard.
\newblock Derived equivalences as derived functors.
\newblock {\em J. London Math. Soc. (2)}, 43(1):37--48, 1991.

\bibitem{R}
Claus~Michael Ringel.
\newblock The repetitive algebra of a gentle algebra.
\newblock {\em Bol. Soc. Mat. Mexicana (3)}, 3(2):235--253, 1997.

\bibitem{RSS}
Lleonard Rubio~y Degrassi, Sibylle Schroll, and Andrea Solotar.
\newblock {The first Hochschild cohomology as a Lie algebra}.
\newblock {\em arXiv e-prints: 1903.12145}, March 2019.

\bibitem{SJ}
Jan Schr\"{o}er.
\newblock On the quiver with relations of a repetitive algebra.
\newblock {\em Arch. Math. (Basel)}, 72(6):426--432, 1999.

\bibitem{S}
Sibylle Schroll.
\newblock Trivial extensions of gentle algebras and {B}rauer graph algebras.
\newblock {\em J. Algebra}, 444:183--200, 2015.

\bibitem{SchrollSurvey}
Sibylle Schroll.
\newblock {Brauer graph algebras}.
\newblock {\em Homological Methods, Representation Theory, and Cluster
  Algebras, CRM Short Courses, Springer 2018}, 2018.

\bibitem{Schwede}
Stefan Schwede.
\newblock An exact sequence interpretation of the {L}ie bracket in {H}ochschild
  cohomology.
\newblock {\em J. Reine Angew. Math.}, 498:153--172, 1998.

\bibitem{Stasheff}
Jim Stasheff.
\newblock The intrinsic bracket on the deformation complex of an associative
  algebra.
\newblock {\em J. Pure Appl. Algebra}, 89(1-2):231--235, 1993.

\bibitem{Str}
Claudia Strametz.
\newblock {\em Structure d'alg{\`e}bre de Lie de la cohomologie de Hochschild
  en degr{\'e} un et groupe d'automorphismes ext{\'e}rieurs}.
\newblock Th\`eses, {Universit{\'e} Montpellier II - Sciences et Techniques du
  Languedoc}, June 2002.

\bibitem{Str2}
Claudia Strametz.
\newblock The {L}ie algebra structure on the first {H}ochschild cohomology
  group of a monomial algebra.
\newblock {\em J. Algebra Appl.}, 5(3):245--270, 2006.

\bibitem{Suarez}
Mariano Su\'{a}rez-\'{A}lvarez.
\newblock A little bit of extra functoriality for {E}xt and the computation of
  the {G}erstenhaber bracket.
\newblock {\em J. Pure Appl. Algebra}, 221(8):1981--1998, 2017.

\bibitem{Valdivieso}
Yadira Valdivieso-Diaz.
\newblock {Hochschild cohomology of Jacobian algebras from unpunctured
  surfaces: A geometric computation}.
\newblock {\em arXiv e-prints: 1512.00738}, December 2015.

\bibitem{Volkov}
Yury Volkov.
\newblock Gerstenhaber bracket on the hochschild cohomology via an arbitrary
  resolution.
\newblock {\em arXiv e-prints: 1610.05741}, October 2016.

\bibitem{Zimmermann}
Alexander Zimmermann.
\newblock Fine {H}ochschild invariants of derived categories for symmetric
  algebras.
\newblock {\em J. Algebra}, 308(1):350--367, 2007.

\end{thebibliography}
\bibliographystyle{plain}



\end{document}